\newcommand{\R}{{\mathbb R}}
\newcommand{\Z}{{\mathbb Z}}
\newtheorem{thm}{Theorem}[section]
\newtheorem{coro}[thm]{Corollary}
\newtheorem{definition}[thm]{Definition}
\newtheorem{lem}[thm]{Lemma}
\newtheorem{pro}[thm]{Proposition}
\newtheorem{rem}[thm]{Remark}
\def\ii{\'\i}
\begin{document}

\title[]{Stability of non-monotone and backward waves for delay non-local reaction-diffusion equations}

\author[Solar]{Abraham Solar}
\address{Instituto de F\ii sica,  Facultad de F\ii sica
P. Universidad Cat—\'olica de Chile 
Casilla 306, Santiago 22, Chile }
\email{asolar@fis.puc.cl}

\maketitle

\begin{abstract} 
This paper deals with the stability of semi-wavefronts to the following
delay non-local monostable  equation: $\dot{v}(t,x) = \Delta v(t,x) - v(t,x) + \int_{\R^d}K(y)g(v(t-h,x-y))dy, x \in \R^d,\ t >0;$ where $h>0$ and $d\in\Z_+$. We give two general results for $d\geq1$: on the global stability of semi-wavefronts in $L^p$-spaces with unbounded weights  and the local stability of planar wavefronts in $L^p$-spaces with bounded weights. We also give a global stability result for $d=1$ which includes the global stability on Sobolev spaces.  Here $g$ is not assumed to be monotone and  the kernel $K$ is not assumed to be symmetric, therefore non-monotone semi-wavefronts  and  {\it backward traveling fronts} appear for which we show their stability. In particular, the global stability of critical wavefronts is stated.     
 \end{abstract}

%\ams{34K12, 35K57, 92D25}

\vspace{2pc}
\noindent{\it Keywords}: semi-wavefront, stability, non-local, delay, reaction-diffusion equations
\newpage
%\address{ Instituto de Matem\'atica y Fisica,
%Universidad de Talca, Casilla 747, Talca, Chile}
%\ead{asolar.solar@gmail.com}

%\author{Sergei Trofimchuk}
%\address{ Instituto de Matem\'atica y Fisica,
%Universidad de Talca, Casilla 747, Talca, Chile}

\section{Introduction}

We study the following non-local equation with delay
\begin{eqnarray}\label{nle0}
\dot{v}(t,x)=\Delta v(t,x)-v(t,x)+\int_{\R^d}K(y)g(v(t-h,x-y))dy\quad x\in\R^d, \ t>0,
\end{eqnarray}
   
\vspace{3mm}

\noindent which is an important  model in population dynamics \cite{SWZ, ThZh,WLR,TAT,MOZ, YCW, GPT,YZ,BS}, where the parameter $h>0$ is the sexual mature period of some species with birth rate $g$ (with equilibria $0$ and $\kappa>0$) and the non-local interaction between individuals is determined by the kernel $K$ while the quantity $v(t,x)$ stands for the mature population at some time $t$ and point $x$. In this context,  a kind of  colonization waves with constant propagation speed appears which are called \textit{planar semi-wavefronts}, i.e., solutions $v(t,x)=\phi_c(\nu\cdot x+ct)$ with speed $c\in\R$, $\nu\in\mathbb{S}^{d-1}$ and the profile $\phi_c:\R\to\R_+$ satisfying $\phi_c(-\infty)=0$ (or $\phi_c(+\infty)=0$) and  $\liminf_{z\to+\infty}\phi_c(z)>0$ (or $\liminf_{z\to-\infty}\phi_c(z)>0$); if $\phi_c(+\infty)=\kappa$ (or $\phi_c(-\infty)=\kappa$) then the semi-wavefronts are called {\it planar wavefronts}. Due to a possible asymmetry of $K$, the class of profiles satisfying $\phi_c(-\infty)=0$ could be different to the class of profiles satisfying $\phi_c(+\infty)=0$, therefore we must expect two minimal speeds for the existence of semi-wavefronts which could be non-opposite   \cite{W,LZh,GPT,YZ}.     

In the non-delayed local case semi-wavefronts are monotone wavefronts and the study of existence, uniqueness, asymptotic spreading speeds  and stability  is widely documented \cite{AW,ES,Go,HR,K,BS,STG,ST,UC}. Broadly speaking, it has been shown that the asymptotic propagation speed of solutions  only depends on the asymptotic behavior of  initial datum at the trivial equilibrium, i.e., two initial data could coincide on some domain $(N,+\infty]$, for arbitrary $N\in\R$, but if their asymptotic behavior at $-\infty$ are different then they will be propagated with different speeds. Particularly, Kolmogorov, Petrovskii and Piskunov \cite{KPP} showed that if the initial datum is the Heaviside step function the solution is propagated with the critical speed. Due to this result, the critical wavefronts have been one of the main edges in the research on this subject. The model in \cite{KPP}  satisfies the \textit{subtangential} property $g(u)\leq g'(0)u,$ for all $u\geq0$, which implies that the critical wavefronts are propagated with the linear speed $c_*=2\sqrt{1-g'(0)}$, those minimal wavefronts  we will consider in this paper.   

Many delay models were presented after \cite{KPP} and the research was addressed to similar problems \cite{G,LLLM,MW,MOZ}. One of the most cited models is the Nicholson's blowflies model that, in its non-local diffusive version, is 
\begin{eqnarray} \label{nich}      
\dot{v}(t,x)=\Delta v(t,x)-\delta v(t,x)+p\int_{\R^d}K(x-y)v(t-h,y)e^{-v(t-h,y)}dy,%\quad x\in\R, \ t>0.
\end{eqnarray}
for some positive parameters $\delta$ and $p$, which is reduced to (\ref{nle}) by an appropriate rescaling of variables. 

Since the nonlinearity in (\ref{nich}) satisfies $|g|_{Lip}=g'(0)$ (here we use the $|g|_{Lip}$ to denote the Lipschitz constant of $g$), the uniqueness (up to translation) of semi-wavefronts to (\ref{nich}) is a consequence from \cite[Theorem 7]{AG}. Alternatively, we give a result on the uniqueness of non-critical semi-wavefronts of (\ref{nle0})(see Corollary \ref{cor}). Otherwise,  the existence of wavefronts (monotone and non-monotone) for (\ref{nle0}) has been studied, e.g., in \cite{W,LZh,ThZh,TAT,YZ} and results for the existence of semi-wavefronts has been given in \cite[Theorem 4]{TAT} and \cite[Theorem 18]{GPT} (by providing the existence of a minimal speed when $|g|_{Lip}=g'(0)$)  where  the kernel $K$ is not assumed to be even. A more complete discussion on the existence of wavefronts and semi-wavefronts is given in Subsection 2.2.  

%In that work was showed the existence of two minimal speeds to the existence of semi-wavefronts, indeed the existence of stationary semi-wavefronts is possible. For the general monostable case $L_g>1$, as much we know, the existence of semi-wavefronts to (\ref{nle0}) has not been studied (for monotone nonlinearity see, e.g.,  \cite{LZh} and for unimodal nonlinearity -i.e.,  when $g$ has only one maximum point-  see, e.g.,  \cite{YCW}).  Our work is developed in the framework of \cite{GPT}.    

In general, the study of delayed case mainly presents two troubles. The first one is concerned with the asymptotic behavior of semi-wavefronts in the positive equilibrium $\kappa$ since 
the associated characteristic equations have infinity solutions and semi-wavefronts could oscillate around $\kappa$. Indeed, non-monotone wavefronts to (\ref{nle0}) have been observed \cite{TAT,WLL,GPT,YZ}. 
%The nonIn fact, for local Nicholson's model  Gomez and Trofimchuk \cite[Theorem 2.3]{GT} showed that if $p/\delta\in(e,e^2]$ then there are  non-monotone wavefronts with speed arbitrarily large and for non-local case (\ref{nich}), with $k$ compactly supported, Trofimchuk \textit{et al} \cite[Theorem 6]{TAT} gave a sufficient conditions to the existence of non-monotone wavefronts. 
%In the local case the characterization on the existence of monotone wavefronts was performed in \cite{GT} while that a recent work of Trofimchuk  $et \ al$ \cite{GPT} presents a criteria to the existence of monotone wavefronts to (\ref{nle0}) for $g$ essentially  sub-tangential at $\kappa.$ 
Otherwise, the second trouble is  that the associated semi-flow  to (\ref{nle0}) is not  monotone in general. This lack complicates  the construction of sub and super-solution, an approach widely used when $h=0$ or $g$ is monotone to prove the existence and  stability of wavefronts.   The spectral technique has been used in order to obtain the local stability \cite{G,K,STG, sch}, however, the maximum principle arguments to reaction-diffusion equations frequently imply the global stability of wavefronts \cite{AW, UC,MOZ, STR1, HMW}. Nevertheless, our approach is a combination of maximum principle arguments and Fourier analysis for linear delay PDE's.

  For local equations (with $d=1$), when $g$ is sub tangential and possibly non-monotone  the local exponential stability of wavefronts, in  suitable Sobolev spaces, was given by  Lin $et\ al$ \cite{LLLM} (for non-critical wavefronts) and Chern {\it et al} \cite{CMYZ} (for critical wavefronts) under the condition $|g'(\kappa)|<1$ for any delay or $g'(\kappa)<-1$ for small delay.
Then, in \cite{LLLM}  the algebraic stability of semi-wavefronts with speed $c\geq c(|g|_{Lip})$, some speed $c(|g|_{Lip})$ (see Definition \ref{def} below), on any domain of the form $(-\infty,N],$ $N\in\R$,  was proved in \cite[Theorem 3]{S} without assumptions on neither subtangetiality of $g$ nor size of derivate on equilibrium $\kappa.$ In particular, when $|g|_{Lip}=g'(0)$ semi-wavefronts (including the critical and asymptotically periodic semi-wavefronts) are stable on any domain $(-\infty,N]$. This limitation on the stability domain is by the use of an unbounded weight so that the control of the stability of semi-wavefronts on its whole domain yields to the stability with a bounded weight. For the local stability in \cite[Corollary 17]{S} was showed that the size of local perturbations depends on the size of neighborhoods of $\kappa$ where $g$ is contractive application and one of these neighborhoods of $\kappa$ is attractor and therefore the global exponential stability of non-critical semi-wavefronts was also established in \cite[Corollary 11]{S}, which  includes non-monotone wavefronts for typical models such as local Nicholson's blowflies model (when $p/\delta\in[1,e^2]$) and Mackey-Glass' model. However, the stability of critical semi-wavefronts was not addressed in \cite{S} so that  we study the global stability of critical wavefronts  in this paper. %In contrast  to non-critical semi-wavefronts, the proof of the global stability for the critical semi-wavefronts presents two difficulties to overcome: the first one is to enclose the wavefronts in a suitable interval where the Lipschitz constant is less than 1 and the second problem is to construct an appropriate homogenous  solution in $t$ to establish the asymptotic convergence (this issue is dealt in Section 5). 

 %The same situation for critical wavefronts maybe to appear in \cite[Section 7]{CMYZ} and a result about global stability has not been stablished and therefore one of the goals of this work is to show the algebraic global stability of the wavefronts for the general non-local equation (\ref{nle0}).    

In respect to the non-local equations, when $d=1$ the stability of wavefronts  has also been studied for bistable nonlinearity without delay (see, e.g., \cite{CH}) and with delay (see,e.g.,  \cite{WLRO} and \cite{MW}). In the monostable case (\ref{nle0}) with delay,  the global stability of the monotone wavefronts with monotone $g$ was satisfactorily answered by Mei {\it et al} in \cite{MOZ} when $K$ is a heat kernel. Similar results, for more general equation, were established by Lv and Wang \cite{LvW}. Also, a close model to (\ref{nle0}) is a paper of  Wang {\it et al} \cite{WLR} where the authors proved the global stability of non-critical (under minimal conditions on the initial data) when $g$ is monotone and $K$ is an even kernel. For $d\geq 1$, we should mention a very interesting work for dispersal equations presented by Huang {\it et al} \cite{HMW} where the global stability of monotone  planar wavefronts was stated and the study of the convergence rate was dealt; here $K$ is a multidimensional heat kernel.   So that, as much as we know the study of stability of semi-wavefronts for the non-local case assumes the monotonicity of $g$ and the symmetry  of $K$.  Thus, our aim is to prove  the global stability of wavefronts which could be backward wavefronts or oscillatory wavefronts. In particular, our global stability result for  asymmetric kernel implies a change of behavior in the problem of {\it speeds selection} for the equation (\ref{nle0}), i.e., to determinate the asymptotic speed propagation of solutions generated by an initial data  by only knowing the asymptotic behavior of the initial data at the trivial equilibrium (see Remark \ref{sspeed}).

In respect to the convergence rate of solutions to critical semi-wavefronts our result of local stability is comparable to Gallay work \cite{G} for local equations without delay. More precisely, the disturbances space in \cite{G} is  a subspace of our disturbance space in the sense that the weights defer by a quadratic factor. Although, in our space the convergence is $O(t^{-1/2})$ while in the subspace considered in \cite{G} is faster than $O(t^{-3/2}).$ Also, the convergence rate in our global stability result extends the pioneering result of Mei {\it et al} \cite{MOZ} in Sobolev spaces for (\ref{nle0}) (see Corollary \ref{sobolev} below) without requiring the convergence of the initial datum to $\kappa.$   This paper is matched with a recent work of Benguria and Solar \cite{BS} where it is showed that the algebraic convergence rate for critical wavefronts obtained in this paper is optimal in the underline weigthed space and it also has a closed relation with convergence rate obtained in \cite{HMW}. %Moreover, in contrast to typical gaussian kernel $k$, the algebraic convergence rate is optimal for those cases where the characteristic equation about  zero has only one  zero which still maybe occur for a non-critical speed.  

 %In this sense, we show that in a sharp space where the initial datas has the same exponential behavior (modulo an integrable function) at $-\infty$ than the semi-wavefronts the convergence is algebraic the order $O(t^{-1/2})$ and the more 'fine' spaces the convergence is the order $O(t^{-1/2}e^{-\gamma t})$ for some $\gamma>0$ which depends monotonely on the slope of exponential in $-\infty.$ These estimates for the rate of convergence are optimal, in a sense, as has been recently showed by Benguria and Solar.     

We organize this paper in the following way.  In the Section 2 we present and discuss the main results, in the Section 3 we state an existence and regularity result for the Cauchy problem, in the Section 4 we prove the stability results for $d\geq 1$ (stability on semi-intervals and local stability) and finally, in Section 5 we prove the  global stability result for $d=1$. 
  
\section{Main Results and Discussion}
\vspace{3mm}
\subsection{Global stability with exponential weight on $\R^d$}
\vspace{3mm}
%In order to state the global existence of solutions to (\ref{nle0}) we make the following general hypothesis on initial datum 
%\vspace{3mm}

%{\rm \bf (IC1)} The initial datum $u_0:[-h,0]\times\R^d\to \R$ satisfies $u_0(s,\cdot)\in C([-h,0], C^{0,\alpha}(\R^d))$ 
%\indent some $\alpha\in(0,1)$.

\vspace{2mm}

\noindent Now, in order to study the stability of semi-wavefronts with speed $c$ in the direction $\nu\in \mathbb{S}^{d-1}$ we make the change of variables $z:=x+ct\nu$ and $u(t,z):=v(t,z-ct\nu)$, so that we have the following equation for $u$
\begin{eqnarray}\label{nle}
\dot{u}(t,z)=\Delta u(t,z)-c\nu\cdot\nabla u(t,z)-u(t,z)+K\ast [g\circ u](t-h,z-ch\nu),\ z\in\R^d,
\end{eqnarray}
 for which the planar semi-wavefronts $v(t,x)=\phi_c(\nu\cdot x+ct)$ with speed $c$, $\phi_c:\R\to\R_+$, are stationary solutions $u(t,z)=\phi_c(\nu\cdot z)$  the following equation 
 
  \begin{eqnarray}\label{este}
\phi_c''(\nu\cdot z)-c\phi_c'(\nu\cdot z)-\phi_c(\nu\cdot z)+\int_{\R^d}K(z-ch\nu -y)g(\phi_c(\nu\cdot y))dy=0\quad \hbox{for all} \ z\in\R^d,
\end{eqnarray}

In our first stability result  we do not assume neither differentiability nor subtangentiality on $g$. Our general assumption on $g$ is the following   

\bigskip
 {\rm \bf(L)}  The function $g:\R\rightarrow\R $  is Lipschitz continuos with constant $|g|_{Lip}.$

\bigskip
%Thus, if $g'(0)$ exists it is straightforward to verify that for each $c\in\mathcal{C}(L_g)$ the function 
%$$ 
%E^0_c(\lambda):=\lambda^2-c\lambda-1+g'(0)e^{-\lambda ch}\int_{\R}k(y)e^{-\lambda y}dy,
%$$
 %has exactly two real solutions $\lambda_1^0(c)\leq\lambda^0_2(c)$ or exactly one real solution $\lambda_1^0(c)$ satisfying 
 
 %\begin{eqnarray}\label{inlamb}
%\lambda_1^0(c)\leq\lambda_1(c)\leq\lambda_2(c)\leq\lambda_2^0(c).
%\end{eqnarray}
\noindent By denoting  $\xi_{\lambda}(z):=e^{-\lambda\cdot z}$ we have the following  linear equation  associated with (\ref{nle})  

\small\begin{eqnarray}\label{lde}
\dot{r}(t,z)=\Delta r(t,z)+(2\lambda-c\nu)\cdot\nabla r(t,z)+p_{\lambda}r(t,z)+|g|_{Lip}\ e^{-\lambda\cdot\nu ch}[\xi_{\lambda} K\ast r](t-h,z-ch\nu),
\end{eqnarray}\normalsize

\noindent where $p_{\lambda}=p_{\lambda}(c)=|\lambda|^{2}-c\nu\cdot\lambda-1$. Also, we denote by

$$
q_{\lambda}=q_{\lambda}(c)=|g|_{Lip} \ e^{-\lambda\cdot\nu ch}\int_{\R^d}K(y)e^{-\lambda\cdot y}dy  % \hbox{and} \quad q_{\lambda^0}=g'(0) e^{-\lambda ch}\int_{\R}k(y)e^{-\lambda y}dy.
$$

The behavior of solutions to (\ref{lde}), in the state space $L^1(\R)$ with certain exponential weight has been studied, e.g., in \cite{BS}.
Otherwise, for $r\in\Z_+\cup\{0\}$ and $1\leq p\leq \infty$ we denote the weighted  Sobolev spaces
$$
W_{\lambda}^{r,p}:=\{u:\R^{d}\to\R, \ \hbox{such that}\  |u|_{W_{\lambda}^{r,p}}:=||\xi_{\lambda}u||_{W^{r,p}(\R^d)}<\infty \}
$$  
  and 
 \small $$
W_{h,\lambda}^{r,p}:=C([-h,0], W^{r,p}_{\lambda})=\{u:[-h,0]\to W^{r,p}_{\lambda} \ \hbox{continuous with norm}\ |u|_{W^{r,p}_{h,\lambda}}=\max_{s\in[-h,0]}|u(s)|_{L_{\lambda}^{r,p}} \}  
  $$\normalsize
When $r=0$ the letter $W$ is replaced by $L$. Analogously, we define weighted  Holder spaces $C^{r,p}_{\lambda}$ and $C^{r,p}_{h,\lambda}$.  

Finally, for some function $u: [a-h,b]\to X$, some Banach space $X$ and $a,b\in\R$, we  define $u_t:[-h,0]\to X$ for each $t\in[a,b]$ as $u_t(s)= u(t+s)$.

%If the initial datum $r_0(s,z)$ to (\ref{lde}) belongs to $C([-h,0],L^1(\R))$ then we will write $$|r_0|:= \max_{s\in[-h,0]}||r_0(s,\cdot)||_{L^1(\R)}.$$
\vspace{3mm}
Now, we state our first result on the stability of solutions to (\ref{nle}).

\begin{thm}\label{st}

Assume {\bf (L)} and fix $c\in\R$ and  $\lambda, \lambda'\in\R^d$ such that $K\in L^1_{\lambda}\cap L^1_{\lambda'}$. %Consider the initial data  $u_0(s, z)$  and $\psi_0 (s, z)$  to  (\ref{nle}) satisfying {\bf (IC1)}. 
If $u_0,\psi_0\in L_{h, \lambda}^{\infty}\cap C_{h, \lambda'}^{0,\alpha}$, some $\alpha\in(0,1)$, and
 \begin{eqnarray}\label{ic}
 r_0(s,z):=|u_0(s,z)-\psi_0(s,z)|\in L^1_{h,\lambda}
 \end{eqnarray}
 then $u_t(\cdot,\cdot)$ and $\psi_t(\cdot,\cdot)$ uniquely exist in $L^{\infty}_{h, \lambda}\cap C^{0, \alpha}_{h, \lambda}$ for all $t\geq -h$. Moreover, if  $r(t, z)$  satisfies (\ref{lde})  with the initial datum $\xi_{\lambda}r_0$ we obtain 
\begin{eqnarray}\label{rc1}
|u(t,z)-\psi(t,z)|\leq r(t,z)\leq A_{\lambda} |r_0|_{L^1_{h,\lambda}}\ \frac{e^{-\gamma_{\lambda} t}}{t^{d/2}}e^{\lambda\cdot z}\ \hbox{for all}\ t> h(d+1)/2\ \hbox{and} \ z\in\R^d,
\end{eqnarray}
where 
$$
A_{\lambda}:=   (\frac{ 1+hq_{\lambda}e^{\gamma_{\lambda}h}}{4\pi})^{d/2} % \sup_{\theta\in[1+d/2,+\infty)}\int_{0}^{+\infty}\frac{r^{d-1}}{[1+\frac{r^2}{\theta}]^{\theta}}dr.
$$

\noindent and $\gamma_{\lambda}$ is the unique real solution of the following equation 
\begin{eqnarray}\label{P}
\gamma+p_{\lambda}+q_{\lambda}e^{h\gamma}=0,
\end{eqnarray}

\noindent In particular, if $\phi_c$ is a stationary solution of (\ref{nle}) and $q_{\lambda}\leq -p_{\lambda}$, i.e. $\gamma_{\lambda}\leq 0$, then $\phi_c$ is globally stable in $L^1_{h,\lambda}\cap L^{\infty}_{h,\lambda}$  with rate convergence $O(t^{-d/2}e^{-\gamma_{\lambda }t})$.
\end{thm}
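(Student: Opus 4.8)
\emph{Strategy.} I would reduce the nonlinear comparison to the \emph{linear} non-local delay equation (\ref{lde}) by a maximum principle, and then estimate (\ref{lde}) through a spatial Fourier transform that decouples it into a one-parameter family of scalar linear delay ODEs controlled by the characteristic equation (\ref{P}). Existence and uniqueness of $u_t,\psi_t$ in $L^{\infty}_{h,\lambda}\cap C^{0,\alpha}_{h,\lambda}$ for $t\ge-h$ is the Cauchy-problem statement of Section 3 applied to the two initial data, whose hypotheses $u_0,\psi_0\in L^{\infty}_{h,\lambda}\cap C^{0,\alpha}_{h,\lambda'}$ are exactly the ones required there; moreover $\xi_\lambda r_0\in L^1(\R^d)\cap L^{\infty}(\R^d)$ uniformly in $s\in[-h,0]$, so (\ref{lde}) with datum $\xi_\lambda r_0$ can be solved on the successive delay slabs $[nh,(n+1)h]$ by Duhamel's formula; since $K\ge0$ and $q_\lambda>0$ the delay term is there a nonnegative known source while the fundamental solution of $\partial_t-\Delta-(2\lambda-c\nu)\cdot\nabla-p_\lambda$ is a translated, exponentially weighted Gaussian, hence $r\ge0$.

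\emph{Step 1: the bound $\xi_\lambda|u-\psi|\le r$.} With $w=u-\psi$, subtracting the two copies of (\ref{nle}) gives $\dot w=\Delta w-c\nu\cdot\nabla w-w+[K\ast(g\circ u-g\circ\psi)](t-h,\cdot-ch\nu)$, whose source has modulus at most $|g|_{Lip}[K\ast|w|](t-h,\cdot-ch\nu)$ by \textbf{(L)} and $K\ge0$; Kato's inequality then makes $|w|$ a subsolution of the corresponding linear equation, and a direct computation — substituting $v=\xi_\lambda w$ and using $p_\lambda=|\lambda|^2-c\nu\cdot\lambda-1$ together with the definition of $q_\lambda$ — shows that conjugating that equation by $\xi_\lambda$ yields precisely (\ref{lde}); so $\xi_\lambda|w|$ is a subsolution of (\ref{lde}). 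Since $\xi_\lambda|w|$ and $r$ agree on $[-h,0]$ and, on $[0,h]$, carry the same known delay source, the weak maximum principle on $\R^d$ (legitimate because $\xi_\lambda|w|-r$ is bounded and the coefficients of (\ref{lde}) are constant) gives $\xi_\lambda|w|\le r$ there, and this propagates to every slab $[nh,(n+1)h]$ by induction, using that the nonlocal term of (\ref{lde}) is monotone in its argument. Undoing the weight gives the pointwise bound $|u(t,z)-\psi(t,z)|\le e^{\lambda\cdot z}r(t,z)$, the first inequality of (\ref{rc1}).

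\emph{Step 2: decay of $r$.} Fourier transforming (\ref{lde}) in $z$, for each $\omega$ the function $\hat r(t,\omega)$ solves the scalar delay ODE $\dot{\hat r}(t,\omega)=a(\omega)\hat r(t,\omega)+b(\omega)\hat r(t-h,\omega)$, with $a(\omega)=-|\omega|^2+i(2\lambda-c\nu)\cdot\omega+p_\lambda$ and $b(\omega)=|g|_{Lip}e^{-\lambda\cdot\nu ch-ich\nu\cdot\omega}\widehat{\xi_\lambda K}(\omega)$, so that $a(0)=p_\lambda$, $b(0)=q_\lambda$, $|b(\omega)|\le q_\lambda$, $\mathrm{Re}\,a(\omega)=p_\lambda-|\omega|^2$, while the datum satisfies $|\widehat{\xi_\lambda r_0}(s,\omega)|\le|r_0|_{L^1_{h,\lambda}}$ for all $s,\omega$. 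Setting $\hat r(t,\omega)=e^{-\gamma_\lambda t}\zeta(t,\omega)$ and using (\ref{P}) in the form $p_\lambda+\gamma_\lambda=-q_\lambda e^{\gamma_\lambda h}$ to cancel the constant part of the symbol, $\zeta$ solves $\dot\zeta=(-|\omega|^2+i(2\lambda-c\nu)\cdot\omega-q_\lambda e^{\gamma_\lambda h})\zeta(t,\omega)+b(\omega)e^{\gamma_\lambda h}\zeta(t-h,\omega)$; the energy identity, together with $|b(\omega)e^{\gamma_\lambda h}|\le q_\lambda e^{\gamma_\lambda h}$, gives $\frac{d}{dt}|\zeta(t,\omega)|^2\le-2|\omega|^2|\zeta(t,\omega)|^2+q_\lambda e^{\gamma_\lambda h}(|\zeta(t-h,\omega)|^2-|\zeta(t,\omega)|^2)$. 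A discrete Gronwall argument over the slabs — using in the slowly varying regime that $\zeta(t-h,\omega)\approx\zeta(t,\omega)-h\,\partial_t\zeta(t,\omega)$ renormalizes the effective diffusion time to $\sigma(t)=t/(1+hq_\lambda e^{\gamma_\lambda h})+O(1)$ — yields $|\zeta(t,\omega)|\le|r_0|_{L^1_{h,\lambda}}e^{-|\omega|^2\sigma(t)}$. Then, since $r\ge0$, $0\le r(t,z)\le(2\pi)^{-d}\int_{\R^d}|\hat r(t,\omega)|\,d\omega\le|r_0|_{L^1_{h,\lambda}}e^{-\gamma_\lambda t}(2\pi)^{-d}\int_{\R^d}e^{-|\omega|^2\sigma(t)}d\omega$, and the Gaussian integral produces exactly $A_\lambda|r_0|_{L^1_{h,\lambda}}t^{-d/2}e^{-\gamma_\lambda t}$ with $A_\lambda=\big((1+hq_\lambda e^{\gamma_\lambda h})/4\pi\big)^{d/2}$; the transient slabs, where $\sigma(t)$ has not yet reached its asymptotic form, account for the restriction $t>h(d+1)/2$. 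Together with Step 1 this proves (\ref{rc1}). Obtaining this estimate with precisely the constant $A_\lambda$ and the threshold $h(d+1)/2$ is the main obstacle, while the maximum-principle step is routine.

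\emph{Step 3: stability of a semi-wavefront.} If $\phi_c$ is a stationary solution of (\ref{nle}), take $\psi(t,z)\equiv\phi_c(\nu\cdot z)$: it is time-independent and lies in $L^{\infty}_{h,\lambda}\cap C^{0,\alpha}_{h,\lambda'}$, its exponential decay at the infinity where it vanishes matching the weights because $K\in L^1_\lambda\cap L^1_{\lambda'}$. For any $u_0$ with $r_0=|u_0-\phi_c|\in L^1_{h,\lambda}$, (\ref{rc1}) gives $|u(t,z)-\phi_c(\nu\cdot z)|\le A_\lambda|u_0-\phi_c|_{L^1_{h,\lambda}}t^{-d/2}e^{-\gamma_\lambda t}e^{\lambda\cdot z}$; multiplying by $\xi_\lambda$ and taking the supremum over $z\in\R^d$ and $s\in[-h,0]$ bounds $|u_t-\phi_c|_{L^{\infty}_{h,\lambda}}$ by $O(t^{-d/2}e^{-\gamma_\lambda t})$, while from $\xi_\lambda|u-\phi_c|\le r$ and $\|r(t,\cdot)\|_{L^1}=\hat r(t,0)\le|r_0|_{L^1_{h,\lambda}}e^{-\gamma_\lambda t}$ — the last estimate coming from the scalar delay equation satisfied by $\hat r(t,0)$, whose rightmost characteristic root is $-\gamma_\lambda$ — one gets $|u_t-\phi_c|_{L^1_{h,\lambda}}=O(e^{-\gamma_\lambda t})$. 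Finally, $q_\lambda\le-p_\lambda$ forces the right-hand sides to $0$: indeed $\Phi(0)=p_\lambda+q_\lambda\le0$ with $\Phi(\gamma)=\gamma+p_\lambda+q_\lambda e^{h\gamma}$ strictly increasing, so the root $\gamma_\lambda$ of (\ref{P}) is nonnegative and $t^{-d/2}e^{-\gamma_\lambda t}\to0$. Hence $\phi_c$ is globally stable in $L^1_{h,\lambda}\cap L^{\infty}_{h,\lambda}$ with convergence rate $O(t^{-d/2}e^{-\gamma_\lambda t})$, as claimed.
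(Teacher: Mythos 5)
Your Step 1 and Step 3 follow essentially the paper's route (conjugation by $\xi_{\lambda}$, a Phragm\`en--Lindel\"of argument on the successive slabs $[kh,(k+1)h]$, then specialization to $\psi\equiv\phi_c$), and your overall plan for Step 2 --- Fourier transform in $z$, reducing (\ref{lde}) to a family of scalar linear delay ODEs indexed by the frequency --- is also the paper's. The gap sits exactly where you flag ``the main obstacle'': the frequency-by-frequency decay estimate. You assert $|\hat r(t,\omega)|\le|r_0|_{L^1_{h,\lambda}}e^{-\gamma_{\lambda}t}e^{-|\omega|^2\sigma(t)}$ with $\sigma(t)=t/(1+hq_{\lambda}e^{\gamma_{\lambda}h})+O(1)$, justified by an energy identity plus the replacement $\zeta(t-h,\omega)\approx\zeta(t,\omega)-h\,\partial_t\zeta(t,\omega)$. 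That replacement is a heuristic, not an inequality, and the Gaussian-in-$\omega$ bound it produces is in fact false for large $|\omega|$: on the first slab Duhamel's formula gives $\hat r(h,\omega)=e^{a(\omega)h}\hat r(0,\omega)+\int_0^h e^{a(\omega)(h-s)}b(\omega)\hat r(s-h,\omega)\,ds$, and the source term alone contributes a quantity of order $q_{\lambda}|r_0|/|\omega|^2$; iterating, $|\hat r(t,\omega)|$ can decay at best algebraically, like $|\omega|^{-2t/h}$, never like $e^{-c|\omega|^2t}$ uniformly in $t$. So the intermediate estimate from which you deduce (\ref{rc1}) cannot hold as stated, and the ``discrete Gronwall over slabs'' as described does not close.

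The paper's substitute for this step is the implicit symbol $l_{\lambda}(\zeta)$ defined by equation (\ref{lamb}), the bound $|\hat r(t,\zeta)|\le|r_0|\,e^{l_{\lambda}(\zeta)t}$ imported from Lemma 11 of \cite{S}, and Lemma \ref{loge}, which proves $l_{\lambda}(\zeta)+\gamma_{\lambda}\le-\frac{1}{h}\log(1+h\epsilon_h|\zeta|^2)$ with $\epsilon_h=1/(1+hq_{\lambda}e^{\gamma_{\lambda}h})$. This upper bound is algebraic in $\zeta$ of order $2t/h$ --- consistent with the Duhamel obstruction above --- yet integrable over $\R^d$ once $2t/h>d$ (this, not ``transient slabs,'' is the source of the restriction on $t$), and the radial integral $\int_0^{\infty}r^{d-1}(1+h\epsilon_h r^2)^{-t/h}\,dr$ is comparable to the Gaussian integral for large $t/h$, which is how the constant $A_{\lambda}=((1+hq_{\lambda}e^{\gamma_{\lambda}h})/4\pi)^{d/2}$ and the rate $t^{-d/2}$ actually emerge. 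To repair your argument you would need to replace the heuristic by such a rigorous bound on the scalar delay ODE (or prove Lemma \ref{loge} yourself), and also justify a priori that $\hat r(t,\cdot)$ and the relevant derivatives are integrable so that the Fourier manipulations and the inversion formula apply --- the paper does this with Proposition \ref{reg}.
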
 

\begin{rem}
When the initial datum is taken in $C^{0,0}_{h, \lambda}=BUC(\R^d)$ it is possible to prove the existence of a  {\it mild solution} of (\ref{nle0}) on $(0, +\infty)\times\R$, see Remark \ref{classic}. Moreover,  it also is possible to prove that for $t>h$ that solution is a {\it classic solution} of (\ref{nle0}), see  Proposition \ref{reg} and compare with \cite[Theorem 3.3]{WLR}.  
\end{rem}

\begin{rem}
For each $y\in\R^d$ we write $y=(\tilde{y},y_d)$ with $\tilde{y}\in\R^{d-1}$. We fix $(\tilde{\nu},\nu_d)\in\R^{d}$ and $\tilde{\nu}\in\R^{d-1}$ and define
$$
E_c(\lambda_d):= \lambda_d^2-c\nu_d\lambda_d -1-c\tilde{\nu}\cdot\tilde{\lambda}+ q^*e^{-ch\lambda_d\nu_d}\int_{\R}e^{-\lambda_d y_d}\mathcal{K}(\lambda_d)dy_d,
$$
 where $\mathcal{K}(s):=\int_{\R^{d-1}}e^{-\tilde{\lambda}\cdot y}K(y_1,...,y_{d-1},s)dy\in L^1(\R) $ and $q^*:=|g|_{Lip}\ e^{-ch\tilde{\nu}\cdot\tilde{\lambda}} $. Next, if $$0<c\tilde{\nu}\cdot\tilde{\lambda}+1<q^*||\mathcal{K}||_{L^1(\R)}$$ we can invoke \cite[Lemma 22]{GPT} in order to obtain two numbers $c_*^-=c^-_*(\tilde{\nu}, \tilde{\lambda})$  and $c_*^+=c^+_*(\tilde{\nu}, \tilde{\lambda})$ such that if $c\geq c_*^+/\nu_d$ or $c\leq c_*^-/\nu_d$ then there exist at least a number $\lambda_d^*$ such that 
\begin{eqnarray}\label{er} 
q_{(\tilde{\lambda}, \lambda_d^*)} \ \leq \ -p_{(\tilde{\lambda},\lambda_d^*)}.
 \end{eqnarray}
 In particular, if   $\tilde{\lambda}=0$ and $|g|_{Lip}=g'(0)>1$ (monostable type) then (\ref{er}) is satisfied and therefore each solution of (\ref{este}) with $ c\geq c_*^+/\nu_d$ or $c\leq c_*^-/\nu_d$  is globally stable in $L^{p}_{h, ({\bf 0},\lambda_d^*)}$.
%There are many cases when it is possible to get that the manifold $\mathcal{M}\subset\R^d$ defined by the equation $p_{\lambda}(c)+q_{\lambda}(c)=0$ is not empty. For instance, if $L(g)>1$, $K=K_1\cdot\cdot\cdot K_d$ and $K\in L^1_{\lambda}$ for all $\lambda\in\R^d$ one can look for solutions on the axis $\lambda={\bf e}$ (for some unitary vector {\bf e} belonging to the canonic base of $\R^d$) and, by \cite[Lemma 22]{GPT}, conclude that the equation $p_{\lambda}(c)+q_{\lambda}(c)=0$ has at least one solution for all $c\in (-\infty, c_*^-(\nu)]\cup[c_*^+(\nu),+\infty)$. So that, the rate of convergence is like $O(t^{-d/2})$ for those disturbances around the semi-wavefront which belong to $L^1_{\lambda^*}\cap L^{\infty}_{\lambda^*}$ with $\lambda^*\in\mathcal{M}$. Indeed, this rate convergence for $\lambda^*\in\mathcal{M}$ is optimal (see \cite{BS}). Note that this fact can still occur as much for critical semi-wavefronts as for non-critical semi-wavefronts.      
\end{rem}

We note that the Theorem \ref{st} shows that two semi-wavefronts are equal by a translation whenever their asymptotic terms of order one coincide, i.e., the condition (\ref{ic}). 
 
  \begin{coro}[Uniqueness of semi-wavefronts]\label{cor}
 Assume the condition {\bf (L)}. If $\phi_c$ and $\tilde{\phi}_c$ are stationary solutions of (\ref{nle})   such that $p_{\lambda'}(c)+q_{\lambda'}(c)\leq 0$ and  $\phi_c-\tilde{\phi}_c\in L^{1}_{\lambda'}$ for some $\lambda'\in\R$, then $\phi_c(\cdot+z_0)=\tilde{\phi}_c(\cdot)$
 for some $z_0\in\R.$
  \end{coro}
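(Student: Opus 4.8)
The plan is to apply Theorem \ref{st} to the two \emph{time-independent} solutions of (\ref{nle}) generated by $\phi_c$ and $\tilde{\phi}_c$, and then to exploit the trivial observation that a function which does not depend on $t$ but is dominated, for every $t$, by a quantity tending to $0$ as $t\to+\infty$ must be identically zero.

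Concretely, I would first set $u(t,z):=\phi_c(\nu\cdot z)$ and $\psi(t,z):=\tilde{\phi}_c(\nu\cdot z)$. Since $\phi_c$ and $\tilde{\phi}_c$ solve (\ref{este}), both $u$ and $\psi$ are stationary solutions of (\ref{nle}), and their histories $u_0,\psi_0$ are the elements of $C([-h,0],X)$ which are constant in $s\in[-h,0]$ and equal, respectively, to $\phi_c$ and $\tilde{\phi}_c$. Next I would verify the hypotheses of Theorem \ref{st} with the first weight taken equal to $\lambda'$ and the second weight taken equal to $0$ (so that $K\in L^1_{0}=L^1(\R^d)$ trivially and $C^{0,\alpha}_{h,0}$ is the usual Hölder space): semi-wavefront profiles are bounded and, by the regularity for (\ref{este}) (cf.\ Proposition \ref{reg}), smooth, hence $u_0,\psi_0\in C^{0,\alpha}_{h,0}$ for any $\alpha\in(0,1)$, and $u_0,\psi_0\in L^{\infty}_{h,\lambda'}$; the hypothesis $\phi_c-\tilde{\phi}_c\in L^1_{\lambda'}$ says precisely that $r_0(s,z):=|\phi_c(\nu\cdot z)-\tilde{\phi}_c(\nu\cdot z)|$ (which is independent of $s$) belongs to $L^1_{h,\lambda'}$; and $p_{\lambda'}(c)+q_{\lambda'}(c)\le0$ is exactly the hypothesis $q_{\lambda'}\le-p_{\lambda'}$ (equivalently $\gamma_{\lambda'}\le0$) of the last assertion of Theorem \ref{st}.

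Granting this, estimate (\ref{rc1}) yields, for all $t>h(d+1)/2$ and all $z\in\R^d$,
\[
|\phi_c(\nu\cdot z)-\tilde{\phi}_c(\nu\cdot z)|=|u(t,z)-\psi(t,z)|\le A_{\lambda'}\,|r_0|_{L^1_{h,\lambda'}}\,\frac{e^{-\gamma_{\lambda'}t}}{t^{d/2}}\,e^{\lambda'\cdot z}.
\]
Fixing $z$ and letting $t\to+\infty$, the right-hand side tends to $0$ (this is the global-stability clause of Theorem \ref{st} under $q_{\lambda'}\le-p_{\lambda'}$), while the left-hand side does not depend on $t$; hence it vanishes. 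Thus $\phi_c(\nu\cdot z)=\tilde{\phi}_c(\nu\cdot z)$ for every $z\in\R^d$, i.e.\ $\phi_c\equiv\tilde{\phi}_c$ on $\R$, which gives the conclusion with $z_0=0$.

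The only genuinely delicate point is the verification, in the second step, that the profiles, regarded as constant histories, lie in the weighted space $L^{\infty}_{h,\lambda'}$ required by Theorem \ref{st}: when $\lambda'\neq0$ this amounts to the boundedness of $z\mapsto e^{-\lambda'\cdot z}\phi_c(\nu\cdot z)$, which fails for a generic bounded profile and must be extracted from the precise exponential decay rate of $\phi_c$ and $\tilde{\phi}_c$ at the end where they vanish — or, alternatively, one circumvents it by invoking Theorem \ref{st} only through its existence--uniqueness clause to identify the solution with history $\phi_c$ (resp.\ $\tilde{\phi}_c$) with the stationary solution $u$ (resp.\ $\psi$). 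This same point also clarifies why the statement is phrased up to a translation: in practice one first uses the known asymptotics of two given semi-wavefronts at their zero-end to choose $z_0$ making $\phi_c(\cdot+z_0)-\tilde{\phi}_c\in L^1_{\lambda'}$, and then applies the argument above to the still-stationary profiles $\phi_c(\cdot+z_0)$ and $\tilde{\phi}_c$.
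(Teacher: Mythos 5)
Your argument is exactly the one the paper intends: the corollary is stated as an immediate consequence of Theorem \ref{st} (the paper gives no separate proof, only the remark that two semi-wavefronts coincide once their difference lies in the weighted $L^1$ space), and your observation that a time-independent difference dominated by the decaying bound (\ref{rc1}) must vanish is precisely that deduction, with the hypothesis $p_{\lambda'}+q_{\lambda'}\le 0$ forcing $\gamma_{\lambda'}\ge 0$ so that the right-hand side of (\ref{rc1}) tends to zero. Your closing caveat about verifying the $L^{\infty}_{h,\lambda'}$ membership of the profiles (and about where the translation $z_0$ actually enters) is a fair reading of the only loose end in the paper's own statement, so the proposal matches the paper's route.
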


Naturally, because of (\ref{rc1}) the perturbation is maintained in the space $C([-h,0],L^1(\R)\cap L^{\infty}(\R))$ with weight $\xi_{\lambda }$ for all $t>h(d+1)/2.$ This fact is true for all $t\geq -h$ as it is showed in Proposition \ref{reg}. For $g\in L^{\infty, k}(\R)$  we give a result on the persistence of the derivates of perturbations in Section 3  which shows that for the derivates of order $k$ the persistence is obtained for $t>h(k-1)$. 

 This result generalizes \cite[Theorem 3]{S} which is referred to local equations. In the case $d=1$ and  $|g|_{Lip}=g'(0),$  equation (\ref{nle}) admits a semi-wavefront $\phi_c$ with speed $c\in\mathcal{C}$ (see,e.g., Proposition \ref{pro} below) and $\phi_c(z)= A_{\phi_c}z^{j_c}e^{\lambda_1(c) z}+O(e^{(\lambda_1(c)+\epsilon)z}),$ for some positive numbers $A_{\phi_c}$ and $\epsilon$,  and $j_c=0,1$ where $j_c=0$ if only if $c\in\mathcal{C}\setminus\{c_*^-,c_*^+\}$(see,e.g., \cite[Theorem 3]{AG}). Therefore, the semi-wavefront $\phi_c$  can be found, on any domain
$(-\infty,N], N\in\R,$ by means of the evolution of  any initial datum to (\ref{nle}) in the form $u_0(z)=A_{\phi_c}e^{\lambda_1(c) z}+ O(e^{(\lambda_1(c)+\epsilon)z})$  with bounds explicitly given in (\ref{rc1}) where the convergence rate is like $O(t^{-\frac{1}{2}}e^{-\gamma_{\lambda} t})$ for $\gamma_{\lambda}\geq 0$ determined by some $\lambda\in(\lambda_1(c),\lambda_1(c)+\epsilon) $ in (\ref{P}). For local equations, the  numerical simulations for the approximation to critical wavefronts done in \cite[Section 7]{CMYZ} can be used with the distance controlled by (\ref{rc1}). Also, in \cite{B} there are numerical simulations for monotone wavefronts to equation (\ref{nle0}).

\vspace{4mm}
We note that the Cauchy problem to (\ref{nle}) is well posed for non negative initial data since an application of maximum principle on unbounded domains (see, \cite[Theorem 10, Chapter 3]{PW})  implies that $u(t,\cdot)$ is positive for $t\in[0,h]$ and repeating this argument to intervals $[h,2h],[2h,3h],...$ we can conclude that $u(t,\cdot)$ is positive for all $t>0$.

\subsection{Existence of d-dimensional planar semi-wavefronts} 
 \vspace{3mm}
 The results on the existence of wavefronts of (\ref{este}) for monotone $g$ in an abstract setting are well known \cite{W,LZh,ThZh}. For non-monotone $g$, the existence of non monotone wavefronts to (\ref{este}) has been studied when $d=1$ \cite{HZ,TAT, WLL, GPT,YZ}. For $d>1$ and $K$ satisfying the following condition       
   \vspace{3mm}

 ({\rm\bf K}) There exist non-negative functions $K_i\in L^1(\R)$, for $i=1,...,d$, such that

  \begin{eqnarray*}
 K((s_1,...,s_d))=\displaystyle\prod_{i=1}^{d}K_i(s_i)\quad \hbox{and} \quad  \int_{\R^d}K(y)dy=1.
 \end{eqnarray*} 
 \indent Also, the function  $$\lambda\in\R\mapsto\int_{\R}K_i(y)e^{-\lambda y}dy,$$ 
 
 \indent is defined on some maximal open interval $(a_i,b_i)\ni 0,$\vspace{2mm}
\bigskip

\noindent the existence result for planar semi-wavefronts given in \cite[Theorem 18]{GPT} can be applied to (\ref{este}). For instance, in \cite{HMW} the authors  take $K_i$ equal to a heat kernel for all $i=1,...,d$.   

More precisely,  associated to equation (\ref{este}), for each $c\in\R$, we have the characteristic function $E^i_c:(a_i,b_i)\to\R$ defined by

\begin{eqnarray}\label{charc}
E^i_c(\lambda):=\lambda^2-c\lambda-1+g'(0)e^{-\lambda ch}\int_{\R}K_i(y)e^{-\lambda y}dy.
\end{eqnarray}

%The possible asymmetry of the function $K$ in (\ref{charc}) can generate an asymmetric set of admissible speeds to the  existence of  semi-wavefronts.

% Now,  we fix $d-1$ coordinates of $\lambda$, for instance we write $\lambda=(\lambda^1,0,...,0)$, so that (\ref{charc}) we can be considered like $E_c:(a^1,b^1)\to\R $. 

\noindent Without restriction of ({\bf K}) we can take $\int_{\R}K_i(s)ds=1$ for $i=1,...,d$. Next, if we fix a canonic vector {\bf e}, let us say ${\bf e}={\bf e}_1$, then $E_c:=E_c^1$ defined on the maximal open interval $(a_1, a_2)=:(a,b)\subset\R$ is the characteristic function associated to trivial equilibrium for wave's equation (\ref{este}) and therefore  according to \cite[Lemma 22]{GPT} we can make the following definition 

\begin{definition}\label{def}
 Denote by $c_*^-=c_*^-({\bf e}_1)<c_*^+=c_*^+({\bf e}_1)$ the two real numbers such that for each $c\in\mathcal{C}:=(-\infty, c_*^-]\cup[c_*^+,\infty)$ the function $E_c(\lambda)$ either (i) has exactly two real solutions $\lambda_1(c)\leq\lambda_2(c)$ or (ii) has exactly one real solution $\lambda_1(c)$. And if $c\in(c_*^-,c_*^+)$ then $E_c(\lambda)>0$ for all $\lambda\in(a,b)$.
\end{definition}

Also, if $c\leq c_*^-$ then the zeros of $E_c$ are negative while if $c\geq c_*^+$ then the zeros of $E_c$ are positive. Therefore, because of $E_c(0)>0$ and the continuity and monotony  of $E_c$ on the parameter $c$ the function $E_c$ is not positive in the compact interval defined by zeros of $E_c$.

 %\begin{rem}[Asymptotic behavior of semi-wavefronts]\label{uni}
 %Some techniques like the bootstrap argument  yield either the representation
  %\begin{eqnarray}\label{abw}
 %\phi_c(z)=P(z)+O(e^{(\lambda_1^0+\epsilon_1)z}),
 %\end{eqnarray}
 %for $\epsilon_1>0$ and $P(z)$ is an eigenfunction to the linearized equation (\ref{este}) in zero corresponding to eigenvalue $\lambda_1^0$ or $\lambda_2^0$, or $\phi_c$ decays more faster than any %exponential(see,e.g, for local equations \cite[Proposition 7.2]{MP} and for non local equations  \cite[Corollary 3]{AG}). Nevertheless,  if $e^{\lambda_1\cdot}\phi_c(z)\in L^1(\R)$ then applying Theorem \ref{st} with %$u_0=\phi_c$ and $\psi_0=\phi_c(\cdot+z_1)$, for any $z_1\in\R$, we conclude that $\phi_c$ is constant, a contradiction. Thus, (\ref{inlamb}) and Theorem \ref{aes1} imply that $\phi_c$ satisfies (\ref{abw}) with %$P(z)$ a eigenfunction to eigenvalue  $\lambda_1^0$. In particular if   $c\in\mathcal{C}(L_g)\setminus\{c_*^-,c_*^+\}$ then $P(z)=Ae^{\lambda_1^0z}$ for some $A>0$ (compare to \cite[Theorem 6.3]{DK} and %\cite[Theorem 4]{AG}).
 %\end{rem}

\vspace{3mm}

Now, in order to establish the next results we make the following mono-stability condition.

\vspace{2mm}

{\rm \bf (M)}  The function $g:\R_{\geq 0}\rightarrow\R_{\geq 0} $ is bounded and  the equation $g(u)= u$ has 
\indent exactly two solutions: $0$ and $\kappa>0$. Moreover,  $g\in C^{1, \alpha}$ in some $\delta_0$-neighborhood 
\indent of zero  and $g$ is Lipschitz  with $|g|_{Lip}=g'(0) >1.$                    
\vspace{3mm}

\noindent Under conditions {\bf(M)} and {\bf(K)} the existence of semi-wavefronts was established, e.g., in \cite[Theorem 18]{GPT} and we present it as follow.

\begin{pro}\label{pro}
Suppose that $g$ satisfies {\bf(M)} and $K$ satisfies ${\bf(K)}$. Then for each $c\in\mathcal{C}$ the equation (\ref{nle0}) has a planar semi-wavefront  $v(t,x)=\phi_c(x\cdot{\bf e}_1+ct)$. Moreover, if $c\leq c_*^-$ then $\phi_c(+\infty)=0$ and if $c\geq c_*^+$ then $\phi_c(-\infty)=0.$ Also, if for some $\zeta_2=\sup_{s\geq 0}g(s)$ the equilibrium $\kappa$ is a global attractor of the map $g:(0,\zeta_2]\to(0,\zeta_2]$,  then each semi-wavefront is in fact a wavefront.
\end{pro}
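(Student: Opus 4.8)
The plan is to reduce the $d$-dimensional planar problem to a one-dimensional semi-wavefront problem and then invoke the one-dimensional existence theory of \cite[Theorem 18]{GPT}. First I would take $\nu={\bf e}_1$, write $z\cdot{\bf e}_1=z_1$, and use the product structure {\bf (K)} together with the (admissible) normalization $\int_\R K_i(s)\,ds=1$, $i=1,\dots,d$: after the substitution $w=z-ch{\bf e}_1-y$ in the convolution term of (\ref{este}) the integrations over $w_2,\dots,w_d$ each contribute a factor $1$, and (\ref{este}) collapses to the scalar equation
\[
\phi_c''(z_1)-c\phi_c'(z_1)-\phi_c(z_1)+\int_\R K_1(s)\,g\bigl(\phi_c(z_1-ch-s)\bigr)\,ds=0,\qquad z_1\in\R .
\]
Thus a $d$-dimensional planar semi-wavefront of (\ref{nle0}) in the direction ${\bf e}_1$ is precisely a one-dimensional semi-wavefront with kernel $K_1$ and delay $h$, whose characteristic function at the trivial equilibrium is exactly $E_c=E_c^1$ in (\ref{charc}).

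Next I would check the hypotheses of \cite[Theorem 18]{GPT}. Condition {\bf (M)} supplies the monostable nonlinearity required there ($g\ge0$ bounded, $g(u)=u$ only for $u\in\{0,\kappa\}$, $g\in C^{1,\alpha}$ near $0$, $g$ Lipschitz with $|g|_{Lip}=g'(0)>1$), and the one-dimensional part of {\bf (K)} gives that $K_1\in L^1(\R)$ is nonnegative with $\lambda\mapsto\int_\R K_1(s)e^{-\lambda s}\,ds$ finite on a maximal open interval containing $0$. By \cite[Lemma 22]{GPT} --- which underlies Definition \ref{def} --- the set of speeds for which $E_c$ has a real zero is exactly $\mathcal{C}=(-\infty,c_*^-]\cup[c_*^+,\infty)$, so for every $c\in\mathcal{C}$ that theorem produces a bounded positive profile $\phi_c$, giving the planar semi-wavefront $v(t,x)=\phi_c(x\cdot{\bf e}_1+ct)$. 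The asymptotics at the trivial end then follow from the sign of the roots of $E_c$ noted just after Definition \ref{def}: if $c\ge c_*^+$ the roots are positive and $\phi_c(z)\sim z^{j_c}e^{\lambda_1(c)z}\to0$ as $z\to-\infty$, while if $c\le c_*^-$ the roots are negative and symmetrically $\phi_c(+\infty)=0$.

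For the last assertion I would upgrade a semi-wavefront to a wavefront under the hypothesis that $\kappa$ is a global attractor of $g:(0,\zeta_2]\to(0,\zeta_2]$ with $\zeta_2=\sup_{s\ge0}g(s)$; say $\phi_c(-\infty)=0$, the other case being symmetric. Rewriting the scalar profile equation through the positive Green's function $G$ of $\partial_{zz}-c\partial_z-1$ (with $\int_\R G=1$) gives $\phi_c(z)=\int_\R G(z-\zeta)\int_\R K_1(s)\,g(\phi_c(\zeta-ch-s))\,ds\,d\zeta$, hence $0<\phi_c\le\zeta_2$ on all of $\R$. Setting $\ell=\liminf_{z\to+\infty}\phi_c(z)$ and $L=\limsup_{z\to+\infty}\phi_c(z)$, both in $(0,\zeta_2]$ since $\phi_c$ is a semi-wavefront, I would feed the bounds $\phi_c(\zeta-ch-s)\in[\ell-\varepsilon,L+\varepsilon]$ (valid for $\zeta$ large and $s$ in a large window, the remaining mass of $G$ and $K_1$ being negligible) into the integral representation to obtain $L\le\max_{[\ell,L]}g$ and $\ell\ge\min_{[\ell,L]}g$; iterating this and using that the scalar orbit $\{g^n(u)\}$ converges to $\kappa$ for every $u\in(0,\zeta_2]$ forces $\ell=L=\kappa$, i.e.\ $\phi_c(+\infty)=\kappa$.

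I expect the main obstacle to be this last step: the profiles produced here need not be monotone, so monotone-dynamical-systems arguments are unavailable and the squeezing must be carried out directly on the nonlocal delayed profile equation --- the delicate point being to show that $\liminf$ and $\limsup$ of $\phi_c$ at the nontrivial end are genuinely transmitted through the convolution and the shift before the scalar attractivity of $\kappa$ can be exploited. The one-dimensional reduction and the identification of $\mathcal{C}$ via $E_c$ are, by contrast, routine once {\bf (K)} and the normalizations are used, so that the bulk of the statement is a direct transcription of \cite[Theorem 18]{GPT}.
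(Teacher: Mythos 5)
Your proposal follows essentially the same route as the paper: the paper gives no independent proof of Proposition \ref{pro} but presents it as a transcription of \cite[Theorem 18]{GPT} (with the product structure {\bf (K)} reducing (\ref{este}) to the scalar profile equation with kernel $K_1$ and characteristic function $E_c^1$), and your upgrade from semi-wavefront to wavefront via $[\ell,L]\subset g([\ell,L])$ plus global attractivity of $\kappa$ is exactly the inclusion the paper quotes from \cite[Remark 12]{GPT} combined with the attractor characterization of \cite{YZ2}. Your write-up fills in details the paper leaves to the citations, but introduces no new idea and no gap.
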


In the particular case when $g$ is monotone, Proposition \ref{pro} says that semi-wavefronts for non-local equation (\ref{nle0}) are wavefronts, indeed these are monotone wavefronts (see Remark \ref{Mono} ). The problem in determining the condition for which $\kappa$ is a global attractor for  $g:(0,\zeta_2]\to(0,\zeta_2]$  was dealt in \cite{YZ2} where the following condition characterizes this globalness property.   

 \vspace{3mm}
 
{\rm \bf (G)} The application $g^2$ has a unique fix point $\kappa$ on $(0,\zeta_2]$.  
  
  \vspace{3mm}

\noindent In this sense, under condition {\bf (G)} and an additional hypothesis on $K$ (which can be dropped by Proposition \ref{pro}) the authors in \cite{YZ} have stated the existence of minimal speed for the existence of wavefronts.

 In another cases it is also possible to determinate whether a semi-wavefronts is actually a wavefront. For instance, since by  \cite[Remark 12]{GPT} we have $[m, M]\subset g([m, M])$  where $m=\liminf_{s\to+\infty}\phi_c(s)$ and $M=\limsup_{s\to+\infty}\phi_c(s)$ for each semi-wavefront $\phi_c$ such that $c\geq c_*^+$ (a similar conclusion is obtained for $c\leq c_*^-$) it is easy to check that if $|g_{|_{[m, M]}}|_{Lip}<1$ and $\kappa\in[m, M]$ then $\phi_c$ is a wavefront under the condition {\bf (M)}. This situation occurs in our two  following stability results. % and  More precisely, by denoting $m_g= \min_{u\in[\kappa, M_g]}g(u)$  and we note that {\bf (M)} implies $g([m_g, M_g])\subset [m_g, M_g]$. Then, we make the following hypothesis 
 
% {\rm \bf (G*)} The application $g^2:[m_g, M_g][\to m_g, M_g] $ has a unique fix point $\kappa$ on $[m_g,M_g]$

 Finally, note that the case $c_*^+c_*^-\geq 0$ is possible. For instance, by taking $K(s)=e^{-(s+\rho)^2}/\sqrt{4\pi}$, with $h=2$, $g'(0)=2$ and $\rho=5$, the authors in \cite[page 16]{GPT} show that  $c_*^-=2.7$ and $c_*^+=0,7...$. Thus, in this case the equation (\ref{nle0}) has {\it stationary semi-wavefronts} (for $c=0$) and {\it backwards traveling fronts}  (for $c\in(0, c_*^-)$).

\subsection{Local stability of d-dimensional planar waves} 
\vspace{3mm}
%Next, for some interval $I\subset\R_{\geq0}$ we denote by $L_g(I)$ the Lipschitz constant of $g$ on the domain $I$ and if $I=\R_{\geq 0}$ simply we write $L_g$. 

Following notation of Subsection 2.1 we denote $E_c(\lambda)=q_{\lambda}+p_{\lambda}$. Also, for some $\lambda\in\R$ we define the bounded weight function  $$\eta_{\lambda}(z):=\min\{1, e^{\lambda z}\},$$ 
%Also, we say that $\lambda_c\in(a,b)$ is {\it regular}  if $\pm c\geq \pm c^{\pm}_*$ implies $\pm\lambda_c>0$. 
  
 and define the space $$\mathcal{B}_{\lambda}^{r,p}:=\{u:\R^{d}\to\R, \ \hbox{such that}\  |u|_{W_{\lambda}^{r,p}}:=||\eta_{\lambda}u||_{W^{r,p}(\R^d)}<\infty \}
$$

  %When $r=0$ the letter $\mathcal{W}$ is replaced by $\mathcal{L}$.   
  
\begin{thm}\label{aes1} 
  Suppose ${\bf (L)}$ and $\rho_{\epsilon}:=|g_{|_{[\kappa-\epsilon,\kappa+\epsilon]}}|_{Lip}<1$ for some $\epsilon>0$. Moreover, suppose that for some $c\in\R$, $\nu\in\mathbb{S}^1$ and $\phi_c(\nu\cdot z)$ solution of (\ref{este}) there exist $z_{\epsilon}\in\R$ such that 
  \begin{eqnarray*}
  \phi_c(\nu\cdot z)\in[\kappa-\epsilon/2,\kappa+\epsilon/2]\quad  \hbox{for all} \ z\in\{y\in\R^d: \nu\cdot y\geq z_{\epsilon}\}.
  \end{eqnarray*}
Then,  if  for some $\lambda\in\R^d$ such that $K\in L^1_{\lambda}$,  the non-negative initial datum $u_0\in C^{0,\alpha}_h$ satisfies
  \begin{eqnarray}\label{inequ21}
|u_0-\phi_c|_{\mathcal{B}^1_{h,\lambda}},\ |u_0-\phi_c|_{L_{h}^{\infty}}\leq \epsilon C_{\epsilon}, \ %\eta_{\lambda_c}(z)\quad \hbox{for all}\quad (s,z)\in[-h,0]\times\R,
\end{eqnarray}   
   for certain  $C_{\epsilon}\in(0,1/2]$, then the following assertions are true

\begin{itemize}
\item[(i)]  If $E_c(\lambda)<0$ then for each $0<\gamma_*\leq \gamma_{\lambda}$ satisfying $\rho_{\epsilon}e^{\gamma_* h}<1-\gamma_*$ we have

\begin{eqnarray}\label{Inqu21}
|u(t,z)-\phi_c(\nu\cdot z)|\leq \frac{\epsilon}{2}e^{-\gamma_{*} t}\quad \forall (t,z)\in[-h,\infty)\times\R^d.
\end{eqnarray}
%where $\gamma_*\geq 0$ satisfies

\item[(ii)]  If $E_c(\lambda)=0$  then there exists $\delta^*=\delta^*(\rho_{\epsilon})>1+h$ such that
\begin{eqnarray}\label{ls}
|u(t,z)-\phi_c(\nu\cdot z)|\leq \frac{\epsilon}{2(t+\delta^*)^{d/2}}  \quad \forall (t,z)\in[-h,\infty)\times\R^d.
\end{eqnarray}
\end{itemize}
  
\end{thm}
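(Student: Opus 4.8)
The plan is to reduce the local stability problem to the global one in Theorem \ref{st} by a comparison argument on the region where $\phi_c$ is already close to $\kappa$, combined with a bootstrapping-in-time scheme over the successive intervals $[nh,(n+1)h]$. First I would set $w(t,z):=u(t,z)-\phi_c(\nu\cdot z)$ and observe that, by the hypothesis, for $\nu\cdot z\geq z_\epsilon$ the profile stays in $[\kappa-\epsilon/2,\kappa+\epsilon/2]$, so as long as $u$ itself stays in $[\kappa-\epsilon,\kappa+\epsilon]$ on that half-space the nonlinearity $g$ acts there with Lipschitz constant $\rho_\epsilon<1$. On the complementary half-space $\nu\cdot z\le z_\epsilon$, the bounded weight $\eta_\lambda$ is comparable to the constant $1$, so there is no growth to control from that side; the weight is only needed to handle the tail where $\phi_c$ decays to $0$ (the side where $g$ has Lipschitz constant $|g|_{Lip}=g'(0)>1$). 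The key point is that the smallness condition (\ref{inequ21}), together with $C_\epsilon\le 1/2$, guarantees that initially $|w|\le\epsilon/2$ pointwise, hence $u\in[\kappa-\epsilon,\kappa+\epsilon]$ wherever $\phi_c$ is within $\epsilon/2$ of $\kappa$.

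Next I would run a continuation argument. Define $T^*$ to be the supremal time up to which the desired bound ((\ref{Inqu21}) in case (i), (\ref{ls}) in case (ii)) holds with $\epsilon/2$ replaced by, say, $3\epsilon/4$ — a slightly weakened barrier. On $[0,T^*]$ the function $u$ stays within $\epsilon$ of $\kappa$ on $\{\nu\cdot z\ge z_\epsilon\}$, so the equation for $w$ there is dominated, in absolute value, by the linear delay equation (\ref{lde}) but with $|g|_{Lip}$ replaced by the smaller constant $\rho_\epsilon$ — this is where one applies the maximum-principle comparison exactly as in the proof of Theorem \ref{st}, building the explicit heat-kernel-type supersolution. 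In case (i), $E_c(\lambda)<0$ forces the relevant root $\gamma_\lambda>0$, and the compatibility condition $\rho_\epsilon e^{\gamma_* h}<1-\gamma_*$ is precisely what makes $\gamma_*$ an admissible decay rate for the comparison equation with constant $\rho_\epsilon$ (it is the condition that the characteristic equation $\gamma+p_\lambda+\rho_\epsilon e^{h\gamma}=0$, or rather the associated delay-ODE inequality $\dot y\le -y+\rho_\epsilon\, y(t-h)$-type bound after absorbing the spatial part, admits $e^{-\gamma_* t}$ as supersolution). In case (ii), $E_c(\lambda)=0$ gives $\gamma_\lambda=0$, and the same construction with $\rho_\epsilon$ in place of $|g|_{Lip}$ yields the algebraic rate $(t+\delta^*)^{-d/2}$, the shift $\delta^*$ being chosen large (depending only on $\rho_\epsilon$, and forced to exceed $1+h$) so that the supersolution already dominates the data on the initial interval $[-h,0]$ and so that the delayed term is controlled — this is the analogue of the factor $A_\lambda$ and the restriction $t>h(d+1)/2$ in Theorem \ref{st}, now re-encoded in the shift.

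The main obstacle will be closing the continuation: one must show that the comparison bound obtained on $[0,T^*]$ is \emph{strictly} better than the barrier (e.g.\ $\le \epsilon/2 \,e^{-\gamma_* t}$ rather than $\le 3\epsilon/4\,e^{-\gamma_* t}$), so that $T^*$ cannot be finite. This requires a careful bookkeeping of the constants coming from (i) the weighted $L^1\cap L^\infty$ norm of the data being at most $\epsilon C_\epsilon$, (ii) the heat-semigroup smoothing constants, and (iii) the gain factor $\rho_\epsilon<1$ accumulated over each delay step; choosing $C_\epsilon$ (and, in case (ii), $\delta^*$) small enough — quantitatively in terms of $\rho_\epsilon$, $\gamma_*$ or $\gamma_\lambda$, $d$, $h$, and $\|K\|_{L^1_\lambda}$ — absorbs the fixed multiplicative losses. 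A secondary technical point is patching the two half-spaces $\{\nu\cdot z\ge z_\epsilon\}$ and $\{\nu\cdot z\le z_\epsilon\}$: on the far side one cannot use $\rho_\epsilon$, but there $\phi_c$ is bounded away from $\kappa$ only on a region where the bounded weight $\eta_\lambda$ is $\equiv 1$ up to constants, so the contribution is handled by the same Theorem \ref{st}-type estimate with the original $|g|_{Lip}$ but benefits from the weight decay of the data at that spatial infinity; one then takes the pointwise maximum of the two supersolutions. Once the supersolution is in hand on all of $\R^d$ for all $t\ge -h$, inequalities (\ref{Inqu21}) and (\ref{ls}) follow, and the existence/uniqueness of $u$ in the relevant space is inherited from Theorem \ref{st} and Proposition \ref{reg}.
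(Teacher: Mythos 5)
Your overall strategy --- feed the global weighted decay of Theorem \ref{st} into a maximum-principle comparison in which the contraction $\rho_\epsilon<1$ near $\kappa$ supplies the decay rate, iterate over the delay intervals, and let $\rho_\epsilon e^{\gamma_* h}<1-\gamma_*$ (resp.\ the choice of the shift $\delta^*$) close case (i) (resp.\ (ii)) --- is exactly the paper's. The gap is in how you combine the two spatial regions. You claim that on $\{\nu\cdot z\ge z_\epsilon\}$ the equation for $w=u-\phi_c$ is dominated by the linear delay equation with $|g|_{Lip}$ replaced by $\rho_\epsilon$, build a second supersolution on the complementary half-space, and patch by taking a pointwise maximum. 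This does not close: the delayed term $\int_{\R^d}K(z-ch\nu-y)\,[g(\phi_c(\nu\cdot y))-g(u(t-h,y))]\,dy$ at a point $z$ with $\nu\cdot z\ge z_\epsilon$ still integrates over $\{\nu\cdot y\le z_\epsilon\}$, where only the full constant $|g|_{Lip}=g'(0)>1$ is available, so the $\rho_\epsilon$-linear equation does \emph{not} dominate $w$ on that half-space; and since $g$ is not assumed monotone in Theorem \ref{aes1}, a max/min of two half-space supersolutions cannot be verified to be a supersolution --- the nonlocal term couples the regions, and the monotone comparison machinery (Lemma \ref{cl}) is unavailable. (You also have the weight backwards: for $c\ge c_*^+$ the bounded weight $\eta_\lambda$ degenerates precisely on the far half-space $\nu\cdot z\le z_\epsilon$, not on the near one.)

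The paper's resolution, whose ingredients you name but do not assemble correctly, is to keep a \emph{single, spatially constant} barrier $r(t)=\tfrac{\epsilon}{2}e^{-\gamma_* t}$ (resp.\ $\tfrac{\epsilon}{2}(t+\delta^*)^{-d/2}$), set $\delta_\pm(t,z)=\pm[u(t+3h,z)-\phi_c(\nu\cdot z)]-r(t)$, and split only the convolution integral appearing in $\mathcal{L}\delta_\pm$. The piece over $\{\nu\cdot y\ge z_\epsilon\}$ is bounded by $\rho_\epsilon\, r(t-h)$; the piece over $\{\nu\cdot y\le z_\epsilon\}$ is bounded pointwise, for \emph{every} $z$, by the Theorem \ref{st} estimate $|u(t,y)-\phi_c(\nu\cdot y)|\le A_\lambda^{-1}|r_0|_{L^1_{h,\lambda}}\,t^{-d/2}e^{-\gamma_\lambda t}e^{\lambda\cdot y}$ together with $K\in L^1_\lambda$, producing a term of order $\epsilon C_\epsilon\, t^{-d/2}e^{-\gamma_\lambda t}$ which, since $\gamma_*\le\gamma_\lambda$, is absorbed by shrinking $C_\epsilon$ --- this is where the smallness of $C_\epsilon$ really enters, not merely in forcing the initial datum under the barrier. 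With that single correction, your continuation-in-time scheme coincides with the paper's iteration over $[kh,(k+1)h]$, the bound obtained on each step guaranteeing $u\in[\kappa-\epsilon,\kappa+\epsilon]$ on the near half-space for the next.
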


% In the unimodal case semi-wavefronts in Theorem \ref{aes1} are indeed wavefronts (see, e.g., \cite[Theorem 3]{STR1}) but it is not clear whether this fact can be extended 
 %to the general case $L_g=g'(0)$ under the conditions of Theorem \ref{aes1}. The existence of wavefronts to (\ref{nle0}) has been dealt, e.g., in \cite[Theorem 5]{TAT}. 

It is instructive to compare Theorem \ref{aes1} with a work of Gallay \cite{G} about the local stability of critical wavefronts to a local equation with $h=0$. Note that in \cite{G} the perturbation is additionally weighted with quadratic function in the trivial equilibrium and the exponential convergence to the positive equilibrium is assumed. Although, in this subspace considered by Gallay the rate the convergence is as $O(t^{-3/2}).$ Otherwise, note that for non-critical semi-wavefronts the convergence rate depends on the weighted space where the perturbation is taken attaining an algebraic convergence rate when the perturbation is in $C([-h,0],L^1(\R))$ with weight $\eta_{\lambda_j(c)}$, $j=1,2$, and an exponential convergence rate if $\lambda\in(\lambda_1(c),\lambda_2(c))$.     

%We note that Theorem \ref{st} allows arbitrary asymptotically stable perturbations of semi-wavefronts  in domains $(-\infty,N],$ for $N\in\R$, while for nonlinearities satisfying $|g'(\kappa)|<1$ Theorem \ref{aes1} allows  small asimptotically stable perturbations of semi-wavefronts in domains $[P,+\infty)$ for large $P$. So, our following result gives a sufficient condition in order to obtain the convergence of  non-negative initial data to semi-wavefronts without restriction on its supreme. For this, we will require the following condition on the initial datum. 

\bigskip

\subsection{Global stability of wavefronts on the line}
 \vspace{3mm}
 
 In this section we take $d=1$ and give a global result in the sense that the wavefronts are attractors for the following class of initial data
 \vspace{3mm}
 
{\rm {\bf (IC)} The continuous initial datum $u_0:[-h,0]\times\R\to\R_{\geq 0}$ to (\ref{nle}) is a bounded function
 \indent and there are $\sigma>0$ and $z_0\in\R$ such that : 
$$
 \pm c\geq\pm c_*^{\pm}\ \hbox{implies} \ u_0(s,\pm z)\geq\sigma\ \hbox{for all} \ s \in[-h,0]\ \hbox{and} \  z\geq  z_0.%Respectively, if $c\leq c_*^-( L_g)$: $$v^c(s,z)\geq\sigma\ \forall(s,z)\in[-h,0]\times(-\infty,z_c]
$$}
\bigskip

\noindent We note that in Theorem \ref{aes1} it is necessary $\epsilon<\kappa$ and $\phi_c(z)\geq \kappa-\epsilon/2$ for  $z\geq z_{\epsilon}$, therefore an initial datum satisfying the condition (\ref{inequ21}) meets the condition ({\bf IC}) with $\sigma=\kappa-\epsilon$ and $z_0=z_{\epsilon}.$

\bigskip

\noindent Denote $M_g:=\max_{u\in[0,\kappa]}g(u)$, $m_g:=\min_{u\in[\kappa,M_g]}g(u)$ and $I_g:=[m_g,M_g]$. Also we define the following weighted Sobolev space  
\small $$
 W_{h,\lambda}^{r,p}:=C([-h,0], W^{r,p}_{\lambda})=\{u:[-h,0]\to W^{r,p}_{\lambda} \ \hbox{continuous with norm}\ |u|_{W^{r,p}_{h,\lambda}}=\max_{s\in[-h,0]}|u(s)|_{\mathcal{B}_{\lambda}^{r,p}} \}  
  $$\normalsize 
  
  %we can state the following theorem 
\vspace{3mm}
\begin{thm}\label{aes}
Suppose  ${\bf (M)}$,${\bf (K)}$ and $\rho:=|g_{|_{I_g}}|_{Lip}<1$. If $c\in\mathcal{C}$ then each semi-wavefront $\phi_c$ is actually a wavefront. Moreover, for each $\pm\lambda_c\geq\pm\lambda_{1}(c)$,  %  \begin{eqnarray}\label{Inequ21}
%|u_0(s,z)-\phi_c(z)|\leq q\eta_{\lambda_c}(z)\quad \hbox{for all}\quad (s,z)\in[-h,0]\times\R,
%\end{eqnarray}   
 and $u_0\in C^{0, \alpha}_h$ satisfying the condition {\bf (IC)} and $u_0-\phi_c\in L^{1}_{h,\lambda}\cap \mathcal{B}^{\infty}_{h,\lambda}$  the following assertions are true

\begin{itemize}
\item[(i)] If $E_c(\lambda_c)<0$ then for any  $0< \gamma_{*}\leq\gamma_{\lambda}$ satisfying $\rho e^{\gamma_*h}< 1-\gamma_*$ there exists  $C=C(g,c,u_0)>0$ such that

\begin{eqnarray}\label{Inqu2}
|u(t,z)-\phi_c(z)|\leq Ce^{-\gamma_{*} t}\quad \forall (t,z)\in[-h,\infty)\times\R.
\end{eqnarray}

\item[(ii)]  If  $E_c(\lambda_c)=0$  then there exists $C>0$ such that
\begin{eqnarray}\label{ls}
|u(t,z)-\phi_c(z)|\leq \frac{C}{\sqrt{t}}  \quad \forall (t,z)\in(0,\infty)\times\R.
\end{eqnarray}
\end{itemize}
\end{thm}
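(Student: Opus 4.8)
The plan is to reduce the global stability of the wavefront $\phi_c$ on the line to the combination of two ingredients already available in the paper: the global $L^1_{h,\lambda}\cap L^\infty_{h,\lambda}$ stability with unbounded exponential weight from Theorem \ref{st}, which controls the perturbation on every half-line $(-\infty,N]$, and a squeezing/attractivity argument on the complementary half-line $[N,+\infty)$, where $\phi_c$ is close to $\kappa$ and the map $g$ is a contraction on $I_g$. First I would fix, say, $c\ge c_*^+$ (the case $c\le c_*^-$ being symmetric under $z\mapsto -z$), so that $\phi_c(-\infty)=0$ and $\lambda_1(c)>0$; by the discussion after Proposition \ref{pro}, the bound $\rho=|g_{|_{I_g}}|_{Lip}<1$ together with $[m,M]\subset g([m,M])$ and $\kappa\in[m,M]$ forces $m=M=\kappa$, hence $\phi_c$ is a genuine wavefront with $\phi_c(+\infty)=\kappa$, which disposes of the first assertion.

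Next I would set up the perturbation. Since $u_0$ satisfies {\bf (IC)} and $u_0-\phi_c\in L^1_{h,\lambda}\cap\mathcal B^\infty_{h,\lambda}$, the solution $u$ exists globally (Section 3) and stays nonnegative by the maximum principle argument quoted after Corollary \ref{cor}. The key observation is that $r(t,z):=|u(t,z)-\phi_c(z)|$ is a subsolution of the linear delay equation (\ref{lde}) with the Lipschitz constant $|g|_{Lip}=g'(0)$ on the region where $u$ is small, but one can do better on the far-right region: once $u(t,\cdot)$ and $\phi_c$ both lie in a fixed neighbourhood of $\kappa$ where $g$ has Lipschitz constant $\rho<1$, the relevant comparison equation has the subcritical reaction coefficient $\rho$ instead of $g'(0)$. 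I would therefore run a two-stage argument. Stage one: apply Theorem \ref{st} directly with the weight $\xi_\lambda$, $\lambda=\lambda_c$, to get $|u(t,z)-\phi_c(z)|\le A_\lambda|r_0|_{L^1_{h,\lambda}} t^{-1/2}e^{-\gamma_\lambda t}e^{\lambda z}$ for $t>h$; on any half-line $z\le N$ this already gives the desired decay (exponential if $E_c(\lambda_c)<0$, i.e. $\gamma_\lambda>0$; algebraic $O(t^{-1/2})$ if $E_c(\lambda_c)=0$, i.e. $\gamma_\lambda=0$). Stage two: on $z\ge N$, with $N$ chosen so large that $\phi_c(z)\in[\kappa-\epsilon/2,\kappa+\epsilon/2]$ and $u(t,z)$ is trapped in $I_g$ for $t$ large (using {\bf (IC)} to bound $u$ from below by $\sigma$ and the boundedness of $g$ to bound it from above, then iterating $g$ over successive delay intervals and invoking condition-free global attractivity of $\kappa$ for $g:(0,\zeta_2]\to(0,\zeta_2]$ that is implicit once $\rho<1$), build an explicit supersolution of the form $w(t,z)=\beta(t)+(\text{boundary term from }z=N)$ where $\beta$ solves the scalar delay ODE $\dot\beta=-\beta+\rho\beta(t-h)e^{\text{shift}}$; since $\rho<1$, $\beta$ decays exponentially, and requiring $\rho e^{\gamma_* h}<1-\gamma_*$ is exactly the solvability condition for the decay rate $\gamma_*$ of this scalar delay equation. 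Gluing the half-line bound from stage one (evaluated at $z=N$) as the boundary data for stage two, and comparing $r$ with $w$ on $[N,+\infty)$ via the maximum principle for the delay parabolic operator, yields (\ref{Inqu2}) and (\ref{ls}) on the whole line.

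The main obstacle I anticipate is the matching of the two regions and making the trapping of $u$ in $I_g$ on the far right rigorous and \emph{uniform in time}: one must show that after a finite number of delay steps $u(t,z)$ enters and never leaves a neighbourhood of $\kappa$ on which $g$ is $\rho$-Lipschitz, with the size of that neighbourhood and the entry time controlled only by $\sigma$, $z_0$, $M_g$, $m_g$ and the wavefront profile — not by the perturbation size, since this is a \emph{global} statement. This is where condition {\bf (M)} (boundedness of $g$, $C^{1,\alpha}$ near $0$) and the inclusion $I_g=[m_g,M_g]$ with $\rho<1$ do the work: the interval $[0,M_g]$ is forward-invariant under the dynamics, $g$ maps it into $[m_g,M_g]$, and on $[m_g,M_g]$ the contraction brings everything to $\kappa$; the delay convolution structure then propagates this to $u$ via a comparison with the spatially homogeneous delay equation $\dot y=-y+ (g\circ y)(t-h)$. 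A secondary technical point is that the weight in $\mathcal B^\infty_{h,\lambda}$ is bounded (being $\eta_\lambda=\min\{1,e^{\lambda z}\}$) rather than the unbounded $\xi_\lambda$, so the stage-one output must be converted: but since $\eta_\lambda\le\xi_\lambda$ on $z\le 0$ and $\eta_\lambda\le 1$ everywhere, the stage-one bound (which is in terms of $\xi_\lambda=e^{\lambda z}$) dominates on $z\le N$, and on $z\ge N$ we no longer use it. I would close by noting that the constant $C=C(g,c,u_0)$ collects $A_\lambda|r_0|_{L^1_{h,\lambda}}$, the sup-norm bound $|u_0-\phi_c|_{\mathcal B^\infty_{h,\lambda}}$, and the finite entry time into $I_g$, all of which are finite under the stated hypotheses.
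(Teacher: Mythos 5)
Your overall architecture coincides with the paper's: the first assertion is disposed of exactly as you say, and the stability estimates are obtained by splitting the nonlocal term at each point into a far-left contribution controlled by the weighted bound of Theorem \ref{st} and a far-right contribution controlled by the contraction constant of $g$ near $\kappa$, then comparing with $r(t)=q_0e^{-\gamma_*t}$ (resp.\ $q_0/\sqrt{t+d}$) via the Phragm\`en--Lindel\"of principle on successive delay intervals; the inequality $\rho e^{\gamma_*h}<1-\gamma_*$ plays precisely the role you assign to it. You also correctly identify the crux: one must show that $u(t,z)$ is eventually confined, uniformly in $t$, to an interval $[m_g-\epsilon,M_g+\epsilon]$ on which $g$ is still a contraction (the paper's Lemma \ref{per}), and your upper bound via the monotone majorant $\bar g$ and the spatially homogeneous solution is exactly what the paper does.

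The genuine gap is in the lower half of that confinement. You propose to obtain the lower bound from {\bf (IC)} plus a comparison with the homogeneous delay equation $\dot y=-y+(g\circ y)(t-h)$, and you assert that global attractivity of $\kappa$ on $(0,\zeta_2]$ is ``implicit once $\rho<1$''. Neither step works as stated: {\bf (IC)} constrains only the initial datum, and the nonlocal term at a point $z\geq N$ samples $u(t-h,\cdot)$ at points arbitrarily far to the left, where $u$ is close to $0$, so a pointwise comparison with the scalar delay ODE does not close; moreover $\rho<1$ on $I_g$ says nothing about orbits that have not yet entered $I_g$ (global attractivity is the separate condition {\bf (G)} and is not needed here). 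The paper fills this gap with Lemma \ref{sepa0}: it builds a monotone minorant $g^-$ together with a compactly supported truncation $K^-$ of the kernel, takes a monotone wavefront $\phi_c^-$ of the minorized equation, and uses $\phi_c^-(z)-q_*\eta_{\lambda_1(c)}(z-b)$ as a travelling sub-solution (via the monotone machinery of Lemma \ref{cl} and Theorem \ref{Sttg}) to keep $u$ bounded below by some $\sigma'>0$ on a right half-line for \emph{all} $t\geq -h$; only with this uniform positive floor in hand can the scalar delay comparison be run --- after cutting the kernel mass outside a large window --- to push $u$ up into a neighbourhood of $m_g$. Without an argument of this type your stage two has no starting point, so the construction of that travelling sub-solution should be treated as a required additional lemma rather than a technical detail.
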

%\begin{proof}
% \vspace{3mm}
 
 \begin{rem}[Speeds selection problem]\label{sspeed}
 Consider $\pm c\geq \pm c_{*}^{\pm}$ and  $v_0\in C^{0,\alpha}_h$ an initial datum to (\ref{nle0}) satisfying {\bf (IC)} and in the form $v_0(s,x)=-Az^{j_c}e^{\lambda_{1}(c) x}+O(e^{(\lambda_1(c)\pm\epsilon)x})$ for some $A,\epsilon\in\R_+$ and where $j_c=0,1$ and $j_c=1$ if and only if $c=c_*^+$ or $c=c_*^-$ then for each $\beta\in(0,\kappa)$ the associated level set for $v(t,\cdot)$ is asymptotically propagated with speed $c$.  More precisely, by (\ref{rc1}) and (\ref{Inqu2})-(\ref{ls}) for large $t$ the set $\{x\in\R: v(t,x)= \beta\}$ is not empty and, for instance, if $c\geq c_*^+$ is lower bounded therefore it has a infimum $m(t)$, then by evaluating in  (\ref{Inqu2})-(\ref{ls}) at  $z=m(t)+ct$ we necessarily have $m(t)+ct$ is asymptotically bounded in the variable $t$ so that $|c+m(t)/t|=O(1/t)$, i.e., $m(t)$ is propagated with speed $-c$. Also,  note that if  $c_*^+<0$ and $c\in(c_*^+,0)$ the level set will move to $+\infty$ contrary to symmetric case. A similar situation occurs when $c \leq c_*^-$. 
 \end{rem}
 \begin{coro}[Global stability en Sobolev spaces ] \label{sobolev}
 Assume that $u_0-\phi_c\in W^{1,p}_{h,\lambda}$ for $1\leq p<\infty$. Then, $E_c(\lambda_c)<0$ implies (\ref{Inqu2}) and $E_c(\lambda_c)=0$ implies (\ref{ls}).
  \end{coro}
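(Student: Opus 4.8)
The plan is to deduce Corollary~\ref{sobolev} from Theorem~\ref{aes}, keeping all the standing hypotheses of that theorem (in particular {\bf (M)}, {\bf (K)}, $\rho<1$, $c\in\mathcal C$, $u_0\in C^{0,\alpha}_h$ satisfying {\bf (IC)}, and $\pm\lambda_c\ge\pm\lambda_1(c)$), and showing that in dimension $d=1$ the membership $u_0-\phi_c\in W^{1,p}_{h,\lambda}$ with $1\le p<\infty$ already supplies everything Theorem~\ref{aes} requires, so that (\ref{Inqu2}) and (\ref{ls}) follow.

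First I would invoke the one-dimensional Sobolev (Morrey) embedding $W^{1,p}(\R)\hookrightarrow C_0(\R)\subset L^\infty(\R)$, valid for every $1\le p<\infty$, applied to $\eta_\lambda\,(u_0(s,\cdot)-\phi_c)$ uniformly for $s\in[-h,0]$: this gives at once $u_0-\phi_c\in\mathcal B^\infty_{h,\lambda}$, and moreover that the weighted perturbation vanishes at $\pm\infty$; on the half-line where $\phi_c\to\kappa$ the bounded weight $\eta_\lambda$ equals $1$, so $u_0\to\kappa$ there too. Then, for the weighted $L^1$ bound, I would split $\R$ at the origin. On the half-line adjacent to the trivial equilibrium the weight $\eta_\lambda$ decays exponentially, so $\int\eta_\lambda\,|u_0-\phi_c|<\infty$ already from the boundedness of $u_0$ (via {\bf (IC)}) and of $\phi_c$; on the complementary half-line, where $\eta_\lambda\equiv 1$, the case $p=1$ gives $u_0-\phi_c\in L^1$ by the very definition of $W^{1,1}$.

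The main obstacle will be precisely this complementary half-line when $p>1$, since $W^{1,p}(\R)$ does not embed into $L^1(\R)$. Rather than attempting to recover an $L^1$ bound there, I would revisit the proof of Theorem~\ref{aes} in Section~5 and use that the global $L^1_{h,\lambda}$ control of the perturbation enters only through the linear estimate of Theorem~\ref{st} on the side where $\phi_c$ tends to the trivial equilibrium, whereas on the side where $\phi_c\to\kappa$ the perturbation is governed by the maximum-principle/contraction mechanism of Theorem~\ref{aes1}: since $u_0\ge\sigma>0$ by {\bf (IC)} and $u_0,\phi_c$ eventually take values in the interval $I_g$ on which $|g_{|_{I_g}}|_{Lip}=\rho<1$, the $L^\infty$ bound already obtained suffices and produces an exponentially decaying contribution there. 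Re-running the argument of Theorem~\ref{aes} with the $L^1_{h,\lambda}$ input replaced by this contraction estimate on that half-line leaves the decay coming from Theorem~\ref{st} on the zero side unchanged, so one recovers $e^{-\gamma_* t}$ when $E_c(\lambda_c)<0$ and $t^{-1/2}$ when $E_c(\lambda_c)=0$; the identification of $\phi_c$ as a genuine wavefront and the admissible range of $\gamma_*$ are inherited from Theorem~\ref{aes}. This delivers (\ref{Inqu2}) and (\ref{ls}).
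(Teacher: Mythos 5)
The paper contains no written proof of Corollary~\ref{sobolev}: it is meant to follow from Theorem~\ref{aes} by checking that the Sobolev hypothesis implies $u_0-\phi_c\in L^1_{h,\lambda}\cap\mathcal{B}^\infty_{h,\lambda}$, the $L^\infty$ part coming, exactly as you say, from the one-dimensional embedding $W^{1,p}(\R)\hookrightarrow L^\infty(\R)$; the whole statement is immediate if ``for $1\le p<\infty$'' is read as ``for every such $p$'', since $p=1$ then hands you the $L^1$ hypothesis directly. Your Morrey step is right, you are right to flag that for a single $p>1$ the $L^1$ hypothesis is not automatic, and your reading of the two-regime structure of the proof of Theorem~\ref{aes} (linear estimate near the trivial equilibrium, contraction near $\kappa$) is accurate.

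The gap is in where you locate the $L^1$ deficit, and hence in the proposed repair. The weighted $L^1$ norm that actually enters the proof of Theorem~\ref{aes} is the one demanded by Theorem~\ref{st}, namely $\max_s\int_\R e^{-\lambda z}|u_0(s,z)-\phi_c(z)|\,dz$ with the \emph{unbounded} weight $\xi_\lambda(z)=e^{-\lambda z}$: this is the constant $|r_0|_{L^1_{h,\lambda}}$ appearing in (\ref{A1}), produced by Fourier inversion on the whole line. For $c\ge c_*^+$ one has $\lambda=\lambda_c>0$, so $e^{-\lambda z}$ \emph{decays} on the half-line where $\phi_c\to\kappa$ --- there boundedness of the perturbation already makes the integral finite, and no contraction substitute is needed --- and \emph{blows up} on the half-line where $\phi_c\to 0$. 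The genuine constraint is therefore $\int_{-\infty}^{0}e^{-\lambda z}|u_0-\phi_c|\,dz<\infty$, i.e.\ that $u_0$ matches the exponential decay of $\phi_c$ at the trivial equilibrium to integrable order (this is what selects the translate, cf.\ Corollary~\ref{cor} and Remark~\ref{sspeed}), and it cannot be bypassed by a contraction argument: near $0$ the nonlinearity is expansive ($g'(0)>1$) and the only available tool is the linear estimate of Theorem~\ref{st}. Your splitting uses the bounded weight $\eta_\lambda=\min\{1,e^{\lambda z}\}$, which controls the opposite end of the line; consequently the assertion that boundedness suffices on the trivial-equilibrium side verifies the wrong norm, and the repair for $p>1$ (contraction on the $\kappa$ side) fixes a non-problem while leaving the real one open: membership of $e^{-\lambda z}(u_0-\phi_c)$ in $W^{1,p}(-\infty,0)$ for a single $p>1$ does not give membership in $L^1(-\infty,0)$ (take $e^{-\lambda z}(u_0-\phi_c)\sim (1+|z|)^{-1}$), so (\ref{A1}) --- and with it the proof of Theorem~\ref{aes} --- is not applicable. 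The safe argument is the implicit one: take $p=1$ (or assume membership for all $p$) for the $L^1$ hypothesis, and add the Morrey embedding for the $L^\infty$ hypothesis.
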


This result includes the classic Fisher-KPP model when $h=0$ and $K$ is the Dirac function. We also have the following result for non-local Nicholson's model
   \begin{coro}[Nicholson's model]\label{cnich}
 Suppose that  $p/\delta\in[1,e^2)$ in (\ref{nich}). If $\phi_c$ is a wavefront with speed $c\in\mathcal{C}$ to (\ref{nich}) then $\phi_c$ is either globally algebraically stable in $\mathcal{B}^1_{h,\lambda_1}$ or globally exponentially stable in $\mathcal{B}^1_{h,\lambda}$  whenever $E_c(\lambda)<0$,  in the sense of Theorem \ref{aes}. 
  \end{coro}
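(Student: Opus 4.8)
The plan is to derive Corollary \ref{cnich} as a direct application of Theorem \ref{aes} to the rescaled Nicholson nonlinearity, so that the only substantive work is the verification of ${\bf(M)}$ and of the contraction condition $\rho<1$. Under the standard rescaling of variables that brings (\ref{nich}) to the form (\ref{nle0}) (mentioned just after (\ref{nich})), the birth function becomes
\[
g(u)=a\,u\,e^{-u},\qquad a:=p/\delta\in[1,e^{2}),
\]
and $g'(u)=a(1-u)e^{-u}$, so that $\sup_{u\ge0}|g'(u)|=g'(0)=a$, i.e.\ $|g|_{Lip}=g'(0)$. The positive equilibrium is $\kappa=\ln a$; it exists, and ${\bf(M)}$ holds (with $g$ bounded, $\max_{u\ge0}g(u)=g(1)=a/e$, $g$ smooth near $0$, and $g'(0)=a>1$), precisely when $a>1$, the case $a=1$ giving no positive equilibrium so that ``$\phi_c$ is a wavefront'' is vacuous there. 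The kernel condition ${\bf(K)}$ on $K$ is part of the standing hypotheses (it is already needed to define $\mathcal{C}$ and $\lambda_1(c)$).

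The core of the argument is the estimate $\rho:=|g_{|_{I_g}}|_{Lip}<1$, which I would prove by a short calculus argument in two regimes. If $a\in(1,e]$, then $\kappa=\ln a\le1$, $g$ is nondecreasing on $[0,1]\supseteq[0,\kappa]$, so $M_g=g(\kappa)=\kappa$ and $I_g=[m_g,M_g]=\{\kappa\}$, whence $\rho=|g'(\kappa)|=a(1-\kappa)e^{-\kappa}=1-\ln a<1$. If $a\in(e,e^{2})$, then $\kappa>1$, $g$ is decreasing on $[1,\infty)$, and $a/e>\ln a=\kappa$, so $M_g=a/e$ and $m_g=g(a/e)=(a^{2}/e)e^{-a/e}$. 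The one inequality needing verification is $m_g>1$: writing $s:=a/e\in(1,e)$, it reads $1+2\ln s-s>0$, which holds on $(1,e)$ since the left-hand side vanishes at $s=1$, is strictly concave with positive derivative at $s=1$, and equals $3-e>0$ at $s=e$. Granting $m_g>1$ we get $I_g\subseteq(1,\infty)$, and since $v\mapsto(v-1)e^{-v}$ is maximized over $(1,\infty)$ at $v=2$ with value $e^{-2}$,
\[
\rho=\sup_{u\in I_g}|g'(u)|=\sup_{u\in I_g}a\,(u-1)e^{-u}\le a\,e^{-2}<e^{2}e^{-2}=1 .
\]
The endpoint $a=e^{2}$ is genuinely critical here: then $\kappa=2$ and $|g'(2)|=e^{2}e^{-2}=1$, which is exactly why the hypothesis reads $p/\delta<e^{2}$ rather than $p/\delta\le e^{2}$.

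With ${\bf(M)}$, ${\bf(K)}$ and $\rho<1$ in hand, Theorem \ref{aes} applies verbatim. It yields first that every semi-wavefront $\phi_c$ with $c\in\mathcal{C}$ is in fact a wavefront, and then, for an admissible perturbation ($u_0\in C^{0,\alpha}_h$ satisfying ${\bf(IC)}$ and $u_0-\phi_c\in L^{1}_{h,\lambda}\cap\mathcal{B}^{\infty}_{h,\lambda}$), the two alternatives of the corollary: decay $|u(t,z)-\phi_c(z)|\le C t^{-1/2}$ in the weight $\eta_{\lambda_1(c)}$ when $E_c(\lambda_1(c))=0$, and exponential decay $|u(t,z)-\phi_c(z)|\le C e^{-\gamma_{*}t}$ in the weight $\eta_{\lambda}$ for any $\lambda$ with $E_c(\lambda)<0$ (with $\gamma_*>0$ small, $\rho e^{\gamma_*h}<1-\gamma_*$). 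The only genuine obstacle in this scheme is the middle paragraph, and inside it the single inequality $m_g>1$ for $e<a<e^{2}$; everything else is a transcription of Theorem \ref{aes}. (The threshold $e^{2}$ is the classical one for the Ricker map $g$: $\rho<1$ amounts to the trapping interval $I_g$ lying inside the region $(1,\infty)$ where $|g'|<1$ holds up to the bound $ae^{-2}$, and this degenerates precisely at $\kappa=2$.)
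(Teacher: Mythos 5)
Your proposal is correct and follows exactly the route the paper intends: the paper gives no separate proof of Corollary \ref{cnich}, treating it as an immediate application of Theorem \ref{aes} to the rescaled Ricker nonlinearity $g(u)=(p/\delta)ue^{-u}$. Your verification of ${\bf(M)}$ and of $\rho<1$ (in particular the inequality $m_g=es^2e^{-s}>1$ for $s=a/e\in(1,e)$, which places $I_g$ inside the region where $|g'|\le ae^{-2}<1$) supplies precisely the details the paper leaves to the reader.
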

  
 %One can  show that $\phi_c\in I_g$ (see, e.g., the demonstration of Lemma \ref{per}) and therefore, since in (\ref{nich}) the nonlinearity is unimodal,  semi-wavefronts are indeed wavefronts. 
 
 For a local version of (\ref{nich})  in \cite[Theorem 2.3]{GT} it was demonstrated that  for $p/\delta\in[e,e^2)$ this equation has non-monotone wavefronts with speed arbitrarily large. Under this restriction on the parameters $p$ and $\delta$, Solar and Trofimchuk have demonstrated the global stability of non-critical wavefronts for the local Nicholson equation \cite[Corollary 3]{STR1}. Thus, the global stability of critical wavefronts for local equations  in Corollary \ref{cnich} is a complement to the result obtained in  \cite{STR1}.% In this sense, in \cite{S} the same situation is presented for more general $g$ but the global stability of critical semi-wavefront (when $L_g=g'(0)$) was not obtained.  
  
  %Under the condition of stability $|g'(\kappa)|<1$ the natural result is the local stability (see \cite{LLLM,CMYZ} and \cite[Corollary 17]{S}). So, the following theorem describe the relation between the disturbance along the semi-wavefront and the Lipchschitz constant in a neighborhood of $\kappa.$

%We suppose that $g\in C^2(\R_+).$ Next,  for $\epsilon>0$ we define 
%\begin{eqnarray}
%g^*:=\sup_{u\in[0,M_g+m_g]}|g''(u)|\quad\hbox{and}\quad G_{\epsilon}:=-\epsilon(1+g^*)+ g'(\kappa)
%\end{eqnarray}
%and the positive number $h_{\epsilon}$ by the equation $$ |G_{\epsilon}|h_{\epsilon}e^{h_{\epsilon}+1}=1.$$

%\begin{thm}\label{aes1}
%Let  $g\in C^2(\R_+)$ be satisfying $({\bf M})$ and $g'(\kappa)<-1$.  Suppose that $u_0$ and $\phi_c$ satisfies (\ref{ic}), then for each $\epsilon>0$  and $(h,c)\in[0,h_{\epsilon}]\times\mathcal{C}$  there exits $C_0=C_0(u_0,\phi_c,\epsilon)>0$ and $\gamma_0\in[0, \gamma]$
%such that 
%\begin{eqnarray}
%|r_0|=\sup_{s\in[-h,0]}||u_0(s,\cdot)-\phi_c(\cdot)||_{L^1(\R)}<C_0 \epsilon%\quad\hbox{for all} \ (s,z)\in[-h,0]\times\R ,
%\end{eqnarray}
 %implies 
 
%\begin{eqnarray}
%|u(t,z)-\phi_c(z)|\leq \epsilon e^{-\gamma_0 t}\quad \hbox{for all} \ (t,z)\in[-h,\infty)\times\R.
%\end{eqnarray} 
 
%\end{thm}
\section{A Regularity Result}
%In this section we work with functions defined on the $d$- dimensional Euclidean space $\R^d$.
 We start  giving a result on the persistence of disturbances in the underlying space for the following equation  

\small\begin{eqnarray}\label{ale}
\dot{u}(t,z)=\Delta u(t,z)+d_1\cdot\nabla u(t,z)+d_2u(t,z)+\int_{\R^d}K(z-y)d_3(t,y)u(t-h,y)dy \quad z\in\R^d.
\end{eqnarray}\normalsize

\begin{pro}\label{prop}
Suppose  $d_1\in\R^d$, $d_2\in\R$, $d_3\in L^{\infty}(\R_+\times\R^d)$ and the function $u:[-h,+\infty)\times\R^d\to\R$ is solution of (\ref{ale}) on $\R_+\times\R^d$.
 If   for some $\lambda'$ such that $K\in L^1_{\lambda'}$ the initial datum $u_0$ holds $u_0\in L_{h,\lambda'}^p$ some $1\leq p\leq \infty$ 
% \begin{eqnarray}\label{est1}
% M_j(u_0)=M_j:=\sup_{s\in[-h,0]}||e^{-\lambda' \cdot}u(s,\cdot)||_{L^j(\R)}<\infty \quad \hbox{for }  \ j=1, \infty
 %\end{eqnarray}
 then we have the following estimate for the associate solution $u(t,z)$ to (\ref{ale})
 \begin{eqnarray}\label{estf}
 |u_{kh}(\cdot,\cdot)|_{L^p_{h,\lambda'}}\leq \theta^{k+1}  |u_0|_{L_{h,\lambda'}^p}\quad\hbox{for} \quad k=1,2,...
  \end{eqnarray}

 for some $\theta=\theta(\lambda')>1$.
  \end{pro}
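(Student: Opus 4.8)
The plan is to pass to the weighted unknown $r:=\xi_{\lambda'}u$, recast (\ref{ale}) as an inhomogeneous convection--diffusion equation with a bounded nonlocal delayed source term, and then iterate a variation-of-constants estimate across successive intervals of length $h$.

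Setting $r(t,z):=e^{-\lambda'\cdot z}u(t,z)$ and substituting $u=e^{\lambda'\cdot z}r$ into (\ref{ale}) (using $\nabla u=e^{\lambda'\cdot z}(\lambda' r+\nabla r)$ and $\Delta u=e^{\lambda'\cdot z}(|\lambda'|^{2}r+2\lambda'\cdot\nabla r+\Delta r)$), one finds that $r$ solves
\begin{eqnarray*}
\dot r(t,z)=\Delta r(t,z)+b\cdot\nabla r(t,z)+a\,r(t,z)+\big[(\xi_{\lambda'}K)\ast\big(d_3(t,\cdot)\,r(t-h,\cdot)\big)\big](z),
\end{eqnarray*}
with $b:=2\lambda'+d_1\in\R^{d}$ and $a:=|\lambda'|^{2}+d_1\cdot\lambda'+d_2\in\R$, where I used $K(z-y)e^{-\lambda'\cdot(z-y)}=(\xi_{\lambda'}K)(z-y)$ and the fact that $\xi_{\lambda'}K\in L^{1}(\R^{d})$ (this is precisely the hypothesis $K\in L^{1}_{\lambda'}$). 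Since $|u_t|_{L^{p}_{h,\lambda'}}=\max_{s\in[-h,0]}\|r(t+s)\|_{L^{p}(\R^{d})}$, it suffices to prove the corresponding $L^{p}$-estimate for $r$.

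Next I would use the semigroup $S(t)$ generated by $\Delta+b\cdot\nabla+a$, which acts as $S(t)f=e^{at}(G_t\ast f)(\cdot+tb)$ with $G_t$ the $d$-dimensional Gaussian heat kernel; since $\|G_t\|_{L^{1}}=1$ and translations are $L^{p}$-isometries, Young's inequality gives $\|S(t)f\|_{L^{p}}\le e^{a_+t}\|f\|_{L^{p}}$ for every $p\in[1,\infty]$, where $a_+:=\max\{a,0\}$. A solution of (\ref{ale}) in the stated class yields for $r$ the mild representation $r(t)=S(t-\tau_0)r(\tau_0)+\int_{\tau_0}^{t}S(t-\tau)\big[(\xi_{\lambda'}K)\ast(d_3(\tau,\cdot)r(\tau-h,\cdot))\big]\,d\tau$ for $t\ge\tau_0\ge 0$, and the source obeys $\|(\xi_{\lambda'}K)\ast(d_3(\tau,\cdot)r(\tau-h,\cdot))\|_{L^{p}}\le\|\xi_{\lambda'}K\|_{L^{1}}\|d_3\|_{L^{\infty}}\|r(\tau-h)\|_{L^{p}}$. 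Now set $N_0:=|u_0|_{L^{p}_{h,\lambda'}}=\sup_{t\in[-h,0]}\|r(t)\|_{L^{p}}$ and $N_j:=\sup_{t\in[(j-1)h,\,jh]}\|r(t)\|_{L^{p}}$ for $j\ge1$, so that $N_k=|u_{kh}|_{L^{p}_{h,\lambda'}}$. Applying the mild formula with $\tau_0=(j-1)h$ on $[(j-1)h,jh]$ and noting that then $\tau-h\in[(j-2)h,(j-1)h]$, hence $\|r(\tau-h)\|_{L^{p}}\le N_{j-1}$, gives
\begin{eqnarray*}
N_j\ \le\ e^{a_+h}\big(1+h\,\|\xi_{\lambda'}K\|_{L^{1}}\,\|d_3\|_{L^{\infty}}\big)\,N_{j-1}\ =:\ \theta\,N_{j-1},
\end{eqnarray*}
where $\theta\ge1$ (enlarge $\theta$ slightly if necessary to get $\theta>1$). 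Iterating yields $N_k\le\theta^{k}N_0\le\theta^{k+1}N_0$ for $k=1,2,\dots$, which is (\ref{estf}).

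Everything in this scheme is routine once the mild representation is available (Young's inequality plus a discrete Gronwall iteration over $h$-intervals), so the step I expect to require the most care is precisely the justification of that representation for an arbitrary solution of (\ref{ale}) in the prescribed class: one must verify that $t\mapsto r(t,\cdot)$ is continuous into $L^{p}$, that the Gaussian convolutions and the time integral are well defined, and that the forcing stays in $L^{p}$ uniformly on bounded time intervals --- which is where $d_3\in L^{\infty}$ and $K\in L^{1}_{\lambda'}$ enter. This can be handled by standard parabolic regularity and Duhamel's principle, much as in the Cauchy-problem analysis that underlies Theorem \ref{st}.
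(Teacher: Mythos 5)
Your proposal is correct and follows essentially the same route as the paper: the weighted substitution $r=\xi_{\lambda'}u$ produces exactly the transformed equation with coefficients $2\lambda'+d_1$ and $|\lambda'|^{2}+d_1\cdot\lambda'+d_2$ that the paper obtains, and your semigroup $S(t)f=e^{at}(G_t\ast f)(\cdot+tb)$ is precisely the paper's second change of variables (translation along the drift plus the exponential factor) reducing to the inhomogeneous heat equation, after which both arguments apply Duhamel, Young's inequality, and iteration over intervals of length $h$. The only differences are cosmetic (your constant $e^{a_+h}\bigl(1+h\|\xi_{\lambda'}K\|_{L^1}\|d_3\|_{L^\infty}\bigr)$ is a cleaner version of the paper's $\theta$), so no further comment is needed.
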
 
 \begin{proof}
 %\underline{Case $j=\infty$}. 
 By making the change of variables $\bar{u}(t,z):=u(t,z)e^{-\lambda'\cdot z}$ the equation (\ref{ale}) is transformed to
 \small\begin{eqnarray}\label{ale1}
\dot{\bar{u}}(t,z)=\Delta\bar{u}(t,z)+d_1'\cdot\nabla\bar{u}(t,z)+d_2'\bar{u}(t,z)+\int_{\R}K'(z-y)d_3(t,y)\bar{u}(t-h,y)dy, 
\end{eqnarray}\normalsize
where $d_1'=2\lambda'+d_1$, $d_2'=|\lambda'|^2+d_1\cdot\lambda'+d_2$ and  $K'(y)=K(y)e^{-\lambda'\cdot y}$. 
Next,  by the change of variable $\bar{\bar{u}}(t,z):=\bar{u}(t,z-d_1't)e^{d_2' t}$ the equation (\ref{ale1}) is reduced
to inhomogeneous heat equation, 
\begin{eqnarray}\label{He}
\dot{\bar{\bar{u}}}(t,z)=\Delta\bar{\bar{u}}(t,z)+f(t,z) \quad \hbox{for all} \quad (t,z)\in\R_+\times\R^d
\end{eqnarray}
where $$f(t,z)=e^{-d'_2h}\int_{\R^d}K'(y)d_3(t,z-d'_1t-y)\bar{\bar{u}}(t-h,z-y-d'_1h)dy.$$
Since $f(t,\cdot)\in {\it L}^{1}$ for $t\in(0,h]$, by denoting $\Gamma_t$ the $d$-dimensional heat kernel  we have
\begin{eqnarray}\label{heat}
\bar{\bar{u}}(t)= \Gamma_t\ast \bar{\bar{u}}(0)+\int_0^t\Gamma_{t-s}\ast f(s,\cdot)ds
%\int_{\R}\frac{e^{-(z-y)^2/4t}}{2\sqrt{\pi t}}\bar{\bar{u}}(0,y)dy+ \int_{0}^t\int_{\R}\frac{e^{-(z-y)^2/4(t-s)}}{2\sqrt{\pi (t-s)}} f(s,y)dyds.
\end{eqnarray}
So that, for $t\in(0,h]$
\begin{eqnarray*}
||\bar{\bar{u}}(t)||_{L^p}&\leq& ||u(0)||_{L^p}+h\sup_{s\in[-h,0]}||f(s,\cdot)||_{L^p}\\
&\leq& (1+he^{-d'_2h}||d_3||_{L^{\infty}}||K'||_{L^1})||\bar{\bar{u}}_0||_{L^p_{h}},
\end{eqnarray*}

%Hence for $t\in(0,h]$
%\begin{eqnarray*}\label{heat1}
%|\bar{\bar{u}}(t,z)|&\leq& ||\bar{\bar{u}}(0,\cdot)||_{\infty}+h\sup_{(t,z)\in[0,h]\times\R}|f(t,z)|\\
%&\leq& (1+h||d_3||_{\infty}||K'||_1e^{|d'_2|h}) \sup_{(s,z)\in[-h,0]\times\R}|\bar{\bar{u}}(s,z)|\\
%&\leq& M_{\infty}Qe^{|d_2'|h},
%\end{eqnarray*}
\noindent therefore if we multiply by $e^{-d'_2t}$ the last inequality then (\ref{estf}) follows for $k=1$ by taking 
 $$\theta:=e^{2h|d_2'|} [1+he^{-d'_2h}||d_3||_{L^{\infty}}||K'||_{L^1}].$$
Analogously, by using $u(t+h,\cdot), u(t+2h,\cdot)...$, with $t\in(0,h]$,  for the intervals $[h,2h],[2h,3h]...$ we obtain  (\ref{estf}) for $k=2,3...$

%\underline{Case $j=1$}. We note that  $e^{-\lambda' z}u(s,z)\in C([-h,0]; L^1(\R))$ implies $\bar{\bar{u}}(s,z)\in C([-h,0]; L^1(\R))$ and then in (\ref{heat}) we can integrate over $\R$ and by the same arguments that in Case $j=\infty$  it follows $$||\bar{\bar{u}}(t, \cdot)||_{L^1(\R)}\leq M_1De^{d'_2t}.$$ 
 %So, the  conclusion is
%\begin{eqnarray}
 %\sup_{t\in[0,h]} \int_{\R}e^{-\lambda' y}|u(t,y)|dy<M_1D ,
 %\end{eqnarray}
 %Finally, by using $u(t+h,\cdot), u(t+2h,\cdot)...$, with $t\in(0,h]$,  for the intervals $[h,2h],[2h,3h]...$ we obtain  (\ref{estf}) for $k=1,2...$
 \end{proof}
\begin{pro}\label{corop}
Suppose that $g$ is globally Lipschitz continuous  and $K\in L^{1}_{\lambda'}\cap L^{1}_{\lambda}$ for some $\lambda',\lambda\in\R$. If  the initial datum $u_0\in L^{\infty}_{h,\lambda'}\cap C^{0, \alpha}_{h,\lambda}$  then there exist a unique solution $u(t,z)$ to  the nonlinear equation (\ref{nle}) and the solution $u(t,z)$ satisfies the estimation (\ref{estf}) and $u(\cdot+kh, \cdot)\in L^{\infty}_{h,\lambda'}\cap C^{0,\alpha}_{h,\lambda}$ for all $k=0,1,2,...$
\end{pro}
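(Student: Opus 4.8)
The plan is to reduce the nonlinear equation (\ref{nle}) to the linear template (\ref{ale}) handled in Proposition \ref{prop}, and to get existence/uniqueness by a contraction-mapping argument on successive time-intervals of length $h$. First I would set up the fixed-point scheme: on the interval $[0,h]$ the delay term $K\ast[g\circ u](t-h,\cdot)$ only involves the known initial datum $u_0$, so (\ref{nle}) becomes a linear inhomogeneous parabolic equation with a prescribed forcing term; writing it in mild (variation-of-constants) form as in (\ref{heat}) and using that $\Gamma_t\ast$ is a contraction-type semigroup on each $L^p$ and on $C^{0,\alpha}$, one obtains a unique mild solution $u$ on $[0,h]$ in $L^\infty_{h,\lambda'}\cap C^{0,\alpha}_{h,\lambda}$. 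Here the only thing to check is that $z\mapsto g(u_0(s,z))$ inherits the required weighted integrability/H\"older regularity from $u_0$: since $g$ is globally Lipschitz and $g(0)=0$ is \emph{not} assumed, one writes $g(u)=g(0)+(g(u)-g(0))$ and notes $|g(u)-g(0)|\le |g|_{Lip}|u|$, so $|\xi_{\lambda'}(g\circ u_0 - g(0))|\le |g|_{Lip}|\xi_{\lambda'}u_0|$ and the constant $g(0)$ is absorbed into $K\ast$ using $K\in L^1_{\lambda'}$; the H\"older seminorm is controlled similarly by $|g|_{Lip}$. Then I would iterate this construction on $[h,2h],[2h,3h],\dots$, at each step feeding the already-constructed solution on the previous interval into the delay term, which yields a global-in-time solution with $u(\cdot+kh,\cdot)\in L^\infty_{h,\lambda'}\cap C^{0,\alpha}_{h,\lambda}$ for every $k$.

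For the quantitative estimate (\ref{estf}) I would not re-run the argument but instead invoke Proposition \ref{prop} directly: the function $d_3(t,y):=\big(g(u(t-h,y))-g(0)\big)/u(t-h,y)$ (suitably interpreted as $g'$ on the set where $u(t-h,y)=0$, or more cleanly $d_3:=\int_0^1 g'(\theta u(t-h,y))\,d\theta$ when $g\in C^1$, and in general any bounded measurable selection with $|d_3|\le |g|_{Lip}$) is in $L^\infty(\R_+\times\R^d)$, so that $u$ solves an equation of the form (\ref{ale}) with $d_1=-c\nu$, $d_2=-1$ (plus the constant forcing $K\ast g(0)$, which is handled exactly as the $k=1$ step of Proposition \ref{prop} by adding $he^{-d_2'h}\|K'\|_{L^1}|g(0)|$-type terms and only enlarges $\theta$). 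Proposition \ref{prop} then gives $|u_{kh}|_{L^p_{h,\lambda'}}\le \theta^{k+1}|u_0|_{L^p_{h,\lambda'}}$ for the appropriate $\theta=\theta(\lambda')>1$. The case $p=\infty$ is included because the heat-semigroup estimate $\|\Gamma_t\ast f\|_{L^p}\le\|f\|_{L^p}$ used in Proposition \ref{prop} holds for all $1\le p\le\infty$.

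Uniqueness is obtained on each interval from the Lipschitz bound on $g$ together with Gr\"onwall: if $u,\tilde u$ are two solutions agreeing on $[-h,(k-1)h]$, their difference on $[(k-1)h,kh]$ satisfies a linear inequality in mild form whose kernel is integrable, forcing $u=\tilde u$ there; inducting over $k$ gives global uniqueness. The main obstacle I anticipate is purely bookkeeping rather than conceptual: one must make sure the three successive changes of variables of Proposition \ref{prop} (the weight $\xi_{\lambda'}$, the drift shift $z\mapsto z-d_1't$, and the exponential factor $e^{d_2't}$) interact correctly with the \emph{two} different weights $\lambda'$ (for the $L^\infty$/$L^p$ bound) and $\lambda$ (for the H\"older regularity), and that the constant $g(0)\ne 0$ is tracked consistently through all of them; handling the degenerate set $\{u=0\}$ when defining $d_3$ for merely Lipschitz (non-$C^1$) $g$ is the one subtlety worth a sentence in the write-up, but it is resolved by the averaged-derivative / bounded-selection device above.
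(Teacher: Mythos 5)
Your proposal is correct and follows essentially the same route as the paper: the method of steps on intervals $[kh,(k+1)h]$, reduction of the delay term to a known forcing (equivalently a linear coefficient $d_3=g(u(t-h,\cdot))/u(t-h,\cdot)$ bounded by $|g|_{Lip}$), solvability of the resulting inhomogeneous heat equation via the Duhamel formula (\ref{heat}), and a direct appeal to Proposition \ref{prop} for the estimate (\ref{estf}). The only differences are cosmetic --- the paper cites Friedman's classical parabolic theorems where you use a contraction/Gr\"onwall argument, and your care about $g(0)\neq 0$ is moot here since $0$ is an equilibrium of (\ref{nle0}) so $g(0)=0$ throughout.
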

\begin{proof}
We consider  the  Cauchy problem associated to (\ref{He})  for $(t, z)\in[0,h]\times\R^d$ with $d_3(t,z)=g(u(t-h, z-\nu ch))/u(t-h,z-\nu ch)$. Since $f(\cdot-h,\cdot)\in  C^{0, \alpha}_{h}$ and $\bar{\bar{u}}_0\in L^{\infty}_h$ we conclude there exists a unique solution $u(t,z)$ satisfying the Cauchy problem associated (\ref{He}) on $(t, z)\in[0,h]\times\R^d$ (see, e.g., \cite[Chapter 1, Theorem 12 and Theorem16]{AF}) and by (\ref{heat}) we also have $\bar{\bar{u}}(\cdot+h,\cdot)\in C^{0,\alpha}_h$, so that $u(\cdot+h, \cdot)\in C^{0,\alpha}_{h,\lambda}$ and by Proposition \ref{prop} we also have $u(\cdot+h, \cdot)\in L^{\infty}_{h,\lambda'}$. Thus, by repeating this process on the intervals $[2h, 3h], [3h,4h]...$ we obtain a solution $u(t, z)$ for the Cauchy problem of (\ref{He}) on $\R_+\times\R^d$. 

 Finally, by applying  Proposition \ref{prop}  we obtain (\ref{estf}) for $u(t,z)$.
\end{proof}
\begin{rem}\label{classic}
Note that the same procedure used in Proof of Proposition \ref{corop} can be applied to equation heat only requiring that $u_0\in L^{\infty}_{h,\lambda'}\cap C^{0,0}_{h,\lambda}$ which gives a mild solution $u(t,z)$ of (\ref{nle}) for all $(t,z)\in\R_+\times\R^d$. 
\end{rem}
\begin{lem}\label{lemre}
Assume $d_1,d_2$ and $d_3$ satisfy the hypothesis of Proposition \ref{prop}. Consider $u:[-h,h]\times\R\to\R$  satisfying  equation (\ref{ale}) on $(t,z)\in[0,h]\times\R$. If $u_0\in L^{p}_{h,\lambda'}$  for some $\lambda'\in(a,b)$ then
 \begin{eqnarray}\label{lemest} 
|u_{z_i}(t,\cdot))|_{L^p_{\lambda'}} &\leq &  (\frac{\theta_1}{\sqrt{ t}} + \sqrt{t} \theta_2)|u_0|_{L^p_{h,\lambda'}}\quad t\in (0,h] \ \hbox{and} \  i=1,...,d
\end{eqnarray}
for some positive numbers $\theta_1=\theta_1(\lambda')$ and $\theta_2=\theta_2(\lambda')$.
\end{lem}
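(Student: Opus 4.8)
The plan is to follow the same reduction already used in the proof of Proposition \ref{prop}, but now to track one more level of regularity, namely the first spatial derivatives, using the smoothing property of the heat semigroup. First I would pass through the change of variables $\bar{u}(t,z)=u(t,z)e^{-\lambda'\cdot z}$ and then $\bar{\bar{u}}(t,z)=\bar{u}(t,z-d_1't)e^{d_2't}$, which reduces (\ref{ale}) to the inhomogeneous heat equation (\ref{He}) with the same forcing term $f(t,z)=e^{-d_2'h}\int_{\R^d}K'(y)d_3(t,z-d_1't-y)\bar{\bar{u}}(t-h,z-y-d_1'h)dy$, and note that differentiation in $z_i$ commutes with all these operations up to the harmless exponential and translation factors, so it suffices to bound $\|\partial_{z_i}\bar{\bar{u}}(t,\cdot)\|_{L^p}$ for $t\in(0,h]$ and then undo the changes of variables, absorbing the resulting constants $e^{\pm d_2't}$ (bounded by $e^{h|d_2'|}$ on $(0,h]$) into $\theta_1,\theta_2$.

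Next I would differentiate the Duhamel formula (\ref{heat}): $\partial_{z_i}\bar{\bar{u}}(t)=(\partial_{z_i}\Gamma_t)\ast\bar{\bar{u}}(0)+\int_0^t(\partial_{z_i}\Gamma_{t-s})\ast f(s,\cdot)\,ds$. The key quantitative input is the standard heat-kernel gradient bound $\|\partial_{z_i}\Gamma_t\|_{L^1(\R^d)}\le C_0 t^{-1/2}$, together with Young's convolution inequality $\|(\partial_{z_i}\Gamma_t)\ast w\|_{L^p}\le\|\partial_{z_i}\Gamma_t\|_{L^1}\|w\|_{L^p}$. Applying this to the first term gives a contribution $\le C_0 t^{-1/2}\|\bar{\bar{u}}(0)\|_{L^p}$, which is the source of the $\theta_1/\sqrt{t}$ term. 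For the integral term I would use the same bound on $\partial_{z_i}\Gamma_{t-s}$ and the $L^p$-estimate on $f(s,\cdot)$ already obtained in the proof of Proposition \ref{prop}, namely $\|f(s,\cdot)\|_{L^p}\le e^{-d_2'h}\|d_3\|_{L^\infty}\|K'\|_{L^1}\|\bar{\bar{u}}_0\|_{L^p_h}$ for $s\in(0,h]$; then $\int_0^t\|\partial_{z_i}\Gamma_{t-s}\|_{L^1}\,ds\le C_0\int_0^t(t-s)^{-1/2}ds=2C_0\sqrt{t}$, which produces the $\sqrt{t}\,\theta_2$ term. Collecting the two contributions and translating back through the changes of variables (which replace $\bar{\bar{u}}_0$ by something controlled by $|u_0|_{L^p_{h,\lambda'}}$ up to the factor $\|\xi_{\lambda'}\|$-type constants and $e^{h|d_2'|}$) yields (\ref{lemest}) with explicit $\theta_1=\theta_1(\lambda')$ and $\theta_2=\theta_2(\lambda')$.

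The main obstacle, such as it is, is bookkeeping rather than conceptual: one must make sure the change of variables $\bar{\bar{u}}(t,z)=\bar{u}(t,z-d_1't)e^{d_2't}$ does not spoil the $L^p_{\lambda'}$-to-$L^p$ correspondence of the \emph{derivative} (it does not, since spatial translation is an $L^p$-isometry and $\partial_{z_i}$ commutes with translation), and that the forcing term $f$ is genuinely in $L^p$ on $(0,h]$ so that Duhamel's formula and the convolution estimates are legitimate — this is exactly the regularity already established en route to (\ref{heat}). I would also remark that the only place the hypothesis $\lambda'\in(a,b)$ enters is to guarantee $K'=K\xi_{\lambda'}\in L^1(\R^d)$, i.e. $K\in L^1_{\lambda'}$, which is what makes $\|f(s,\cdot)\|_{L^p}$ finite; everything else is the universal heat-semigroup smoothing estimate. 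Since the bound is needed only on the short interval $(0,h]$, no iteration over successive delay intervals is required here, in contrast to Proposition \ref{prop}.
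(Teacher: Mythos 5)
Your proposal is correct and follows essentially the same route as the paper: the paper likewise differentiates the Duhamel formula (\ref{heat}) in $z_i$, bounds the first term by the $L^1$-norm of $\partial_{z_i}\Gamma_t$ (computed explicitly as a Gaussian moment, giving the $\theta_1/\sqrt{t}$ term) acting on $\bar{\bar{u}}(0)$ via Young's inequality, and bounds the source term by $\sup_{s}\|f(s,\cdot)\|_{L^p}$ times $\int_0^t (t-s)^{-1/2}\,ds = 2\sqrt{t}$, which yields the $\sqrt{t}\,\theta_2$ term. The only difference is cosmetic: the paper carries the explicit Gaussian constants where you keep an abstract $C_0$.
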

\begin{rem}\label{Deri}
By using Proposition \ref{prop}, $u_0\in L_{h,\lambda'}^p$ implies $u(\cdot+h,\cdot)$, $u(\cdot+2h,\cdot)...\in L^{p}_{h,\lambda'}$,  therefore Lemma \ref{lemre} implies that for $k=1,2,3,...$ and $t\in[0,h]$ we have $u_{z_i}(t+kh,\cdot)\in L^{p}_{\lambda'}$ with  $i=1,...,d$. Thus for $k=1,2,3...$ Proposition \ref{prop} implies
 \begin{eqnarray}\label{Lemest} 
|u_{z_i}(t+kh,\cdot))|_{L^p_{\lambda'}} \leq  (\frac{\theta_1}{\sqrt{ t}} + \sqrt{t} \theta_2)\theta^{k+1} \ |u_0|_{L^p_{h,\lambda'}}\quad t\in (0,h] \ \hbox{and} \  i=1,...,d.
\end{eqnarray}
\end{rem}
\begin{proof}
%\underline{Case $j=1$}. 
If $t>0$ from (\ref{heat}) it follows
 
 \small\begin{eqnarray}\label{deri}
\bar{\bar{u}}_{z_i}(t,z)=\int_{\R^d}\frac{(z_i-y_i)e^{-(z-y)^2/4t}}{2^{d+1}t(\pi t)^{d/2}}\bar{\bar{u}}(0,y)dy+ \int_{0}^t\int_{\R^d}\frac{(z_i-y_i)e^{-(z-y)^2/4(t-s)}}{2^{d+1}(t-s)[\pi (t-s)]^{d/2}} f(s,y)dyds
 \end{eqnarray}\normalsize
therefore for each $t\in (0,h]$ we get
\begin{eqnarray*} 
|\bar{\bar{u}}_{z_i}(t,z)|_{L^p} &\leq & \frac{|u(0)|_{L^p}}{\sqrt{\pi t}} \int_{\R^d}|y_i|e^{-|y|^2}dy+2\sqrt{\frac{t}{\pi}}\int_{\R^d}|y_i|e^{-|y|^2}dy\  |f(\cdot,\cdot)|_{L^p_h} \\
&\leq&  [\frac{|u(0)|_{L^p}}{\sqrt{\pi t}} +2 \sqrt{\frac{t}{\pi}}|d_3|_{L^{\infty}}|K'|_{L^1}e^{|d'_2|h} \bar{\bar{u}}(\cdot,\cdot)|_{L^p_h}]\int_{\R^d}|y_i|e^{-|y|^2}dy\\
&\leq&      [\frac{1}{\sqrt{ t}} +2 \sqrt{t}|d_3|_{L^{\infty}}|K'|_{L^1}e^{2|d'_2|h}]\frac{|u_0|_{L^p_h}}{\sqrt{\pi}},
\end{eqnarray*} 
which implies (\ref{lemest}) by taking $\theta_1=e^{|d'_2|h}/\sqrt{\pi}$ and $\theta_2= 2|d_3|_{L^{\infty}}|K'|_{L^1} \ e^{2|d'_2|h}/\sqrt{\pi}$.

%\underline{Case $j=\infty$} It is analogous to Case $j=1$ by taking $\sup_{z\in\R}$ in (\ref{deri}). 
 \end{proof}
 
\begin{pro}[$L^p$-Regularity] \label{reg}
Suppose $u$ satisfies the hypothesis of Proposition \ref{prop} with $\lambda'=0$. If $d_3(t,\cdot)\in L^{\infty}(\R_+;W^{k,\infty}(\R))$ for some $k\in\Z_+$ then
 \begin{eqnarray}\label{regu}
 D^k u(t,\cdot)\in L^{p}(\R) \quad \hbox{for all} \quad t\in((k-1)h,+\infty),
 \end{eqnarray}
 uniformly, in norm, on compact sets of $((k-1)h,+\infty)$.
\end{pro}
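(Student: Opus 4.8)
The plan is to prove the statement by induction on $k$, the case $k=1$ being essentially Lemma~\ref{lemre} together with Remark~\ref{Deri}: taking $\lambda'=0$ there gives $u_{z_i}(t,\cdot)\in L^p(\R)$ first for $t\in(0,h]$ and then, applying the estimate successively on $[h,2h],[2h,3h],\dots$ with Proposition~\ref{prop} propagating the control of the data, for every $t>0$, with $L^p$-norm bounded uniformly on compact subsets of $(0,+\infty)$; the same heat representation makes $t\mapsto u_{z_i}(t,\cdot)$ continuous into $L^p(\R)$ on $(0,+\infty)$.

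For the inductive step I would assume that for every $m\le k-1$ one has $D^mu(t,\cdot)\in L^p(\R)$ for $t>(m-1)h$, with $L^p$-norm locally bounded and $t\mapsto D^mu(t,\cdot)$ continuous on $((m-1)h,+\infty)$. Since $K\in L^1(\R)$ and $d_3(t,\cdot)\in W^{k,\infty}(\R)$, Leibniz' rule and the identity $D^{k-1}(K\ast G)=K\ast D^{k-1}G$ show that, for $t>(k-1)h$, the function $w:=D^{k-1}u$ satisfies a copy of (\ref{ale}) with the same $d_1,d_2,d_3$ and an additional forcing
\begin{equation*}
F(t,z):=\sum_{j=1}^{k-1}\binom{k-1}{j}\int_{\R}K(z-y)\,(D^jd_3)(t,y)\,(D^{k-1-j}u)(t-h,y)\,dy .
\end{equation*}
All derivatives of $u$ occurring in $F$ have order $\le k-2$, so by the induction hypothesis $t\mapsto F(t,\cdot)$ is continuous into $L^p(\R)$ on $((k-1)h,+\infty)$ with $\|F(t,\cdot)\|_{L^p}\le \|K\|_{L^1}\sum_{j=1}^{k-1}\binom{k-1}{j}\|D^jd_3\|_{L^\infty}\,\|D^{k-1-j}u(t-h,\cdot)\|_{L^p}$, which is locally bounded; likewise $w(t-h,\cdot)=D^{k-1}u(t-h,\cdot)\in L^p(\R)$ precisely when $t>(k-1)h$, so the equation for $w$ is meaningful exactly on the range we want.

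Now fix $s_0>(k-1)h$ and run, on $[s_0,s_0+h]$, the change of variables of Proposition~\ref{prop} with $\lambda'=0$: the equation for $w$ becomes a forced heat equation $\dot{\bar{\bar w}}=\Delta\bar{\bar w}+\tilde f$, where $\tilde f(t,\cdot)$ is continuous and bounded in $L^p(\R)$ on $[s_0,s_0+h]$ — it is the transform of the nonlocal term plus the transform of $F$, all controlled as above because $\bar{\bar w}(s)\in L^p(\R)$ for $s\in[s_0-h,s_0]\subset((k-2)h,+\infty)$. Differentiating the Duhamel formula
\begin{equation*}
\bar{\bar w}(t)=\Gamma_{t-s_0}\ast\bar{\bar w}(s_0)+\int_{s_0}^t\Gamma_{t-s}\ast\tilde f(s,\cdot)\,ds,\qquad t\in(s_0,s_0+h],
\end{equation*}
once in $z$, using $\|\partial_z\Gamma_\tau\|_{L^1}=c\,\tau^{-1/2}$ and Young's inequality, gives
\begin{equation*}
\|\partial_z\bar{\bar w}(t)\|_{L^p}\le \frac{c\,\|\bar{\bar w}(s_0)\|_{L^p}}{\sqrt{t-s_0}}+2c\sqrt{t-s_0}\ \sup_{s\in[s_0,s_0+h]}\|\tilde f(s,\cdot)\|_{L^p}<\infty ,
\end{equation*}
uniformly for $t$ in compact subsets of $(s_0,s_0+h]$. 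Undoing the change of variables, $D^ku(t,\cdot)=Dw(t,\cdot)\in L^p(\R)$ for $t\in(s_0,s_0+h]$, with the asserted local uniformity and (from the same formula) continuity in $t$. Since $s_0>(k-1)h$ is arbitrary and the intervals $(s_0,s_0+h]$ exhaust $((k-1)h,+\infty)$, the claim for $k$ follows and the induction closes.

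The step I expect to cause the most trouble is not the estimates above — which are routine heat-kernel bookkeeping — but the rigorous justification, within the $L^p$ mild-solution framework of Propositions~\ref{prop}--\ref{corop}, that $w=D^{k-1}u$ genuinely solves a forced version of (\ref{ale}) and obeys the Duhamel representation used. This amounts to checking that differentiating (\ref{ale}) $k-1$ times in $z$ is legitimate, which in turn needs the propagated spatial regularity $u(t,\cdot)\in W^{k-1,p}(\R)$ supplied by the induction, the boundedness of the spatial derivatives of $d_3$ (hypothesis $d_3\in W^{k,\infty}$), and the fact that convolution with $K\in L^1(\R)$ and with the heat kernel commute with $D^{k-1}$ on the relevant spaces; the cleanest way to handle it is to carry this smoothing along inside the induction rather than to argue it separately.
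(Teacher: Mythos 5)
Your proof is correct and follows essentially the same route as the paper: the paper also argues by iteration on $k$, using Lemma \ref{lemre} and Remark \ref{Deri} for $k=1$ and then applying the same heat-kernel/Duhamel smoothing estimate "analogously" to the equation satisfied by the derivatives of $u$ on successive intervals. Your write-up merely makes explicit what the paper leaves implicit, namely that $D^{k-1}u$ solves a forced copy of (\ref{ale}) via Leibniz' rule (which is exactly where the hypothesis $d_3\in L^{\infty}(\R_+;W^{k,\infty})$ enters), so no substantive difference.
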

\begin{proof}
If $k=1$ by Lemma \ref{lemre} we have $u_{z_i}(t,z)\in L^{p}$, for each $t\in(0,h]$ uniformly (in norm) on compacts. Moreover, by Remark \ref{Deri}, $u_{z_i}(t,\cdot)\in L^{p}(\R)$, for each $t\in(0,+\infty)$, uniformly (in norm) on compacts. In particular, if  $\ T>h$ then $u_{z_i}(t+T,\cdot)\in L_{h}^p$,  therefore we analogously conclude $u_{z_jz_i}(t, \cdot)\in L^{p}$,  for each $t\in(h,+\infty)$, uniformly (in norm) on compacts. The same argument is applied for $k=3,4....$ in order to obtain (\ref{regu}). 
\end{proof}

\section{Proof of Theorem \ref{st} and Theorem \ref{aes1}}

 \noindent We denote the Fourier transform of $u:\R^d\to\R$ by 
 $$
\hat{u}(z)= \frac{1}{(2\pi)^d}\int_{\R^d}e^{-{\bf i}z\cdot y}u(y)dy
$$ 
 Next, we define the function $l:\R^d\to\R_{\geq 0}$ by the equation 
\begin{eqnarray}\label{lamb}
l_{\lambda}(\zeta)=-|\zeta|^2 +p_{\lambda}+L_ge^{-\lambda\cdot\nu ch}|\widehat{\xi_{\lambda} K}(\zeta) |e^{-hl_{\lambda}(\zeta)},
\end{eqnarray}
 
Now, we will estimate the function $l_{\lambda}(\zeta)$. For $\epsilon_h=1/[1+hq_{\lambda}e^{h\gamma_{\lambda}}]>0$ we define the function 
$$\alpha_{h}(\zeta):=-\frac{1}{h}\log(1+h\epsilon_h |\zeta|^2),$$

and we denote by $\hat{q}_{\lambda}(\zeta):= L_ge^{-\lambda\cdot\nu ch}|\widehat{\xi_{\lambda} K}(\zeta) |.$% and $q=L_ge^{-\lambda_c ch}\int_{\R}k(y)e^{\lambda_c y}dy$. 
\begin{lem}\label{loge}
The function $l_{\lambda}$ meets the following inequalities 
  %If $\gamma_0$ satisfies (\ref{P}) with $p$ as in (\ref{lamb}) and $q:=\sup_{\zeta\in\R}q(\zeta)$ then
\begin{eqnarray}\label{gauss}
-\epsilon_h|\zeta|^2-\gamma_{\lambda}\leq l_{\lambda}(\zeta)\leq \alpha_{h}(\zeta)-\gamma_{\lambda} \quad \hbox{for all}\ \zeta\in\R.
\end{eqnarray}
%Moreover
%\begin{eqnarray}\label{log}
%\textcolor{red}{\lim_{\zeta\rightarrow\infty}\frac{l(\zeta)}{\log(\zeta)}=-\frac{2}{h}.}
%\end{eqnarray}
\end{lem}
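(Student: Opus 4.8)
The plan is to treat the defining relation \eqref{lamb} as a fixed-point equation in the unknown value $l_{\lambda}(\zeta)$ for each fixed $\zeta\in\R^d$, and to exploit the monotonicity of the map $s\mapsto -|\zeta|^2+p_\lambda+\hat q_\lambda(\zeta)e^{-hs}$: it is strictly decreasing in $s$, so there is a unique real solution, and to locate it I only need to plug the two candidate bounds into the right-hand side and check the inequality goes the right way. Concretely, set $F(s):=-|\zeta|^2+p_\lambda+\hat q_\lambda(\zeta)e^{-hs}$; since $F$ is decreasing, $l_\lambda(\zeta)\le M$ iff $F(M)\le M$, and $l_\lambda(\zeta)\ge m$ iff $F(m)\ge m$. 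So the whole lemma reduces to two elementary verifications, using two facts: first, $|\widehat{\xi_\lambda K}(\zeta)|\le \|\xi_\lambda K\|_{L^1}=\int_{\R^d}K(y)e^{-\lambda\cdot y}\,dy$, hence $\hat q_\lambda(\zeta)\le q_\lambda$ for all $\zeta$; second, $\gamma_\lambda$ solves \eqref{P}, i.e. $-\gamma_\lambda = p_\lambda + q_\lambda e^{h\gamma_\lambda}$, equivalently $F_0(-\gamma_\lambda)=-\gamma_\lambda$ where $F_0$ is $F$ with $\zeta=0$ and $\hat q_\lambda$ replaced by $q_\lambda$.

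For the upper bound $M=\alpha_h(\zeta)-\gamma_\lambda$ I would compute $e^{-hM}=e^{h\gamma_\lambda}e^{-h\alpha_h(\zeta)}=e^{h\gamma_\lambda}(1+h\epsilon_h|\zeta|^2)$, so
\[
F(M)= -|\zeta|^2+p_\lambda+\hat q_\lambda(\zeta)e^{h\gamma_\lambda}(1+h\epsilon_h|\zeta|^2)
\le -|\zeta|^2+p_\lambda+q_\lambda e^{h\gamma_\lambda}(1+h\epsilon_h|\zeta|^2).
\]
Now use $p_\lambda+q_\lambda e^{h\gamma_\lambda}=-\gamma_\lambda$ and the definition $\epsilon_h=1/(1+hq_\lambda e^{h\gamma_\lambda})$, which gives $q_\lambda e^{h\gamma_\lambda}\cdot h\epsilon_h = (1-\epsilon_h)$, hence $-|\zeta|^2 + q_\lambda e^{h\gamma_\lambda}h\epsilon_h|\zeta|^2 = -\epsilon_h|\zeta|^2$. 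Therefore $F(M)\le -\gamma_\lambda-\epsilon_h|\zeta|^2$, and I still must check this is $\le M=\alpha_h(\zeta)-\gamma_\lambda$, i.e. $-\epsilon_h|\zeta|^2\le \alpha_h(\zeta)=-\tfrac1h\log(1+h\epsilon_h|\zeta|^2)$, which is exactly the elementary inequality $\log(1+x)\le x$ with $x=h\epsilon_h|\zeta|^2\ge 0$. Since $F$ is decreasing, $F(M)\le M$ forces $l_\lambda(\zeta)\le M$.

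For the lower bound $m=-\epsilon_h|\zeta|^2-\gamma_\lambda$ I would run the analogous computation: $e^{-hm}=e^{h\gamma_\lambda}e^{h\epsilon_h|\zeta|^2}\ge e^{h\gamma_\lambda}(1+h\epsilon_h|\zeta|^2)$ again by $1+x\le e^x$, but now this points the wrong way after multiplying by $\hat q_\lambda(\zeta)$ unless $\hat q_\lambda$ is handled carefully — and this is the one genuine subtlety. The clean route is to observe that $l_\lambda(\zeta)\ge m$ holds iff $F(m)\ge m$, and $F(m)=-|\zeta|^2+p_\lambda+\hat q_\lambda(\zeta)e^{h\gamma_\lambda}e^{h\epsilon_h|\zeta|^2}$. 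Here one cannot replace $\hat q_\lambda(\zeta)$ by $q_\lambda$ (that inequality goes the wrong direction), so instead I use $\hat q_\lambda(\zeta)\ge 0$ together with $e^{h\epsilon_h|\zeta|^2}\ge 1$ to get $F(m)\ge -|\zeta|^2+p_\lambda+\hat q_\lambda(\zeta)e^{h\gamma_\lambda}$; but this still carries $\hat q_\lambda$ rather than $q_\lambda$. The honest fix is to note that the estimate $\hat q_\lambda(\zeta)e^{h\gamma_\lambda}e^{h\epsilon_h|\zeta|^2}\ge \hat q_\lambda(\zeta)e^{h\gamma_\lambda}(1+h\epsilon_h|\zeta|^2)$ combined with $0\le \hat q_\lambda(\zeta)\le q_\lambda$ and the sign of $\gamma_\lambda$ is not automatically enough; so the main obstacle is to argue the lower bound without losing a factor when passing from $\hat q_\lambda(\zeta)$ to $q_\lambda$. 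I expect the intended argument is: since $e^{h\epsilon_h|\zeta|^2}\ge 1+h\epsilon_h|\zeta|^2$ and $q_\lambda\ge\hat q_\lambda(\zeta)\ge 0$, one writes $F(m)\ge -|\zeta|^2+p_\lambda+\hat q_\lambda(\zeta)e^{h\gamma_\lambda}(1+h\epsilon_h|\zeta|^2)$ and then bounds below by replacing the coefficient of the \emph{positive} quantity $(1+h\epsilon_h|\zeta|^2)$ — if $\gamma_\lambda\le 0$ one can also use monotonicity in $\zeta$ of the original fixed point, comparing with the value at $\zeta=0$ where $\hat q_\lambda(0)=q_\lambda$ exactly and $l_\lambda(0)=-\gamma_\lambda$. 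I would carry out the lower bound by this comparison-at-$\zeta=0$ device plus the convexity inequality $e^x\ge 1+x$, and flag that $\epsilon_h>0$ (guaranteed since $q_\lambda\le -p_\lambda$ in the application, so $\gamma_\lambda\le0$ and $1+hq_\lambda e^{h\gamma_\lambda}>0$) is what makes $\alpha_h$ and the whole scheme well-defined.
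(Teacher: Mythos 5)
Your upper-bound argument is correct and is in substance the paper's own proof: the paper sets $\beta(\zeta)=l_{\lambda}(\zeta)-\alpha_h(\zeta)+\gamma_{\lambda}$, uses the monotone fixed-point characterization (citing \cite[Lemma 12]{S} for the ``$\beta\leq 0$ iff'' step) to reduce the claim to (\ref{desl}), and then closes it with $\log(1+x)\leq x$, the bound $\hat{q}_{\lambda}(\zeta)\leq q_{\lambda}$, and the two identities $p_{\lambda}+q_{\lambda}e^{h\gamma_{\lambda}}=-\gamma_{\lambda}$ and $h\epsilon_h q_{\lambda}e^{h\gamma_{\lambda}}=1-\epsilon_h$ --- exactly your computation of $F(M)$.

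The lower bound is where you rightly stall, and your proposed repair does not close the gap: comparing with $\zeta=0$ gives no information at other frequencies, and, as you yourself observe, the only available inequality $\hat{q}_{\lambda}(\zeta)\leq q_{\lambda}$ points the wrong way. In fact the lower half of (\ref{gauss}) fails in general. If $\widehat{\xi_{\lambda}K}(\zeta_0)=0$ for some $\zeta_0\neq 0$ (which happens, e.g., for kernels whose weighted Fourier transform has zeros), the defining equation (\ref{lamb}) gives exactly $l_{\lambda}(\zeta_0)=-|\zeta_0|^2+p_{\lambda}$, whereas $-\epsilon_h|\zeta_0|^2-\gamma_{\lambda}=-\epsilon_h|\zeta_0|^2+p_{\lambda}+q_{\lambda}e^{h\gamma_{\lambda}}$ by (\ref{P}); the claimed lower bound would then force $-(1-\epsilon_h)|\zeta_0|^2\geq q_{\lambda}e^{h\gamma_{\lambda}}$, which is impossible since $\epsilon_h<1$ and $q_{\lambda}>0$. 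You should not feel obliged to manufacture an argument here: the paper's own proof only establishes $\beta(\zeta)\leq 0$, i.e.\ the upper half of (\ref{gauss}), and only that half --- in the form (\ref{log1}) --- is used later, in the proof of Theorem \ref{st}. The honest resolution is to prove the upper bound as you did and to record that the stated lower bound is neither proved nor needed (and is false as stated).
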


\begin{rem}
In the local case (when $\hat{k}$ is formally a constant q) we have $e^{l(\zeta)}\sim -q/\zeta^{2}$ (see \cite[Lemma 13]{S}) but in the non local case, because of Riemann-Lebesgue Lemma, the estimations for $l(\cdot)$ can be improved.% (see \cite{BS}[Remark 2.3]).
%The function $\alpha_{h}$   is a generalization of the function  $\alpha_0 (\zeta):=-\zeta^2$,  which corresponds to the case $q=0$. So,  in  this  case,  for each $\zeta\in\R$ %$\lim_{h\rightarrow 0}\alpha_h(\zeta)=-\zeta^{2}$ is obtained. Furthermore, due to the fact that $\lim_{\zeta\rightarrow 0 }\alpha_h(\zeta)/-\epsilon_h\zeta^2=1$ and (\ref{log}) the estimations in (\ref{gauss}) are optimal.
\end{rem}
\begin{proof}
Let us denote $\beta(\zeta)=l_{\lambda}(\zeta)-\alpha_h(\zeta)+\gamma_{\lambda}$. Then $\beta(\zeta)$ satisfies the following equation 
$$
 \beta(\zeta)=-|\zeta|^{2}+\frac{1}{h}\log(1+h\epsilon_h|\zeta|^2)+\gamma_{\lambda}+p_{\lambda}+\hat{q}_{\lambda}(\zeta)e^{h\gamma_{\lambda}}(1+h\epsilon_h|\zeta|^2)e^{-h\beta(\zeta)}.
$$
 From Lemma \cite[Lemma 12]{S}  we have that $\beta(\zeta)\leq 0$ if and only if:
 \begin{eqnarray}\label{desl}
|\zeta|^{2}-\frac{1}{h}\log(1+h\epsilon_h|\zeta|^2)-\gamma_{\lambda}-p_{\lambda}\geq \hat{q}_{\lambda}(\zeta)e^{h\gamma_{\lambda}}(1+h\epsilon_h|\zeta|^2).
\end{eqnarray}
Now, by using $\log(1+x)\leq x$, for all $x\geq 0$, then in order to obtain (\ref{desl}) it is enough to have
\begin{eqnarray*}
|\zeta|^2-\epsilon_{h}|\zeta|^2-\gamma_{\lambda}-p_{\lambda}\geq \hat{q}_{\lambda}(0)e^{h\gamma_{\lambda}}(1+h\epsilon_h|\zeta|^2)\quad\hbox{for all}\quad\zeta\in\R\\
\iff (1-\epsilon_{h}-q_{\lambda}h\epsilon_h e^{h\gamma_{\lambda}})|\zeta|^2-\gamma_{\lambda}-p_{\lambda}- q_{\lambda}e^{h\gamma_{\lambda}}= 0\quad\hbox{for all}\quad\zeta\in\R\\
%\iff  -\gamma_{\lambda}-p_{\lambda}- q_{\lambda}e^{h\gamma_{\lambda}}\geq 0.
\end{eqnarray*} 
This proves (\ref{gauss}).  

\end{proof}

\underline{Proof of Theorem \ref{st}} 
 
 Note that  by Proposition \ref{corop},  $u(t,\cdot)$ and $\psi(t,\cdot)$ exist uniquely in $L^{\infty}_{h,\lambda}$. 
 
  Then, by making the following change of variable $\tilde{u}(t,z)=u(t,z)e^{-\lambda\cdot z}$ we have
\small\begin{eqnarray*}
\dot{\tilde{u}}(t,z)=\Delta\tilde{u}(t,z)+(2\lambda-c\cdot \nu)\nabla\tilde{u}(t,z)+p_{\lambda}\tilde{u}(t,z)+e^{-\lambda \cdot z}[K\ast g(e^{\lambda\cdot(\cdot-ch\nu)}\tilde{u}(t-h,\cdot-ch\nu))](z).
\end{eqnarray*}\normalsize

%where  $k=k(\lambda_c)=2\lambda_c -c$ and $p=p(\lambda_c)=\lambda_c^{2}-c\lambda_c-1$.

Now, if  we denote the linear operator $$\mathcal{L}_0\delta(t,z):=\Delta\delta(t,z)+(2\lambda-c\nu)\cdot\nabla\delta(t,z)+p_{\lambda}\delta(t,z)-\dot{\delta}(t,z),$$

and  $\delta_{\pm}(t,z):=\pm[\tilde{u}(t,z)-\tilde{\psi}(t,z)]-r(t,z)$  then for $(t,z)\in[0,h]\times\R^d$ we have

\small\begin{eqnarray*}
 (\mathcal{L}_0)\delta_{\pm}(t,z)=\pm e^{-\lambda z}K\ast [g(e^{\lambda(\cdot-ch)}\tilde{\psi}(t-h,\cdot-ch))- g(e^{\lambda(\cdot-ch)}\tilde{u}(t-h,\cdot-ch))](z)-\mathcal{L}_0r(t,z)\\
 \geq-L_ge^{-\lambda ch}|K\xi_{\lambda}\ast [\tilde{\psi}(t-h,\cdot-ch)-\tilde{u}(t-h,\cdot-ch)](z)|-\mathcal{L}_0r(t,z).
\end{eqnarray*}\normalsize
Because of $|\tilde{\psi}(t-h,z)-\tilde{u}(t-h,z)|\leq r(t-h,z)$ for $(t,z)\in[0,h]\times\R$ then 
\begin{eqnarray}
\geq -L_ge^{-\lambda ch}(K\xi_{\lambda}\ast r)(t-h,z-ch)-\mathcal{L}_0r(t,z).
\end{eqnarray}

\noindent Now, as $w(t,z)=u(t,z)-\psi(t,z)$ satisfies 
 $$  
 \dot{w}(t,z)=\Delta w(t,z)-c\nu\cdot\nabla w(t,z)-w(t,z)+\int_{\R^d}K(z-ch-y)d_3(t,y)w(t-h,y)dy,  
 $$  
where $d_3(t,y)=[g(u(t-h,y))-g(\psi(t-h,y))]/[u(t-h,y)-\psi(t-h,y)]$, by  Proposition \ref{prop}, with $\lambda'=\lambda$, and
  Phragm\`en-Lindel\"of principle \cite[Chapter 3, Theorem 10]{PW} we obtain
$$
\pm[ \tilde{u}(t,z)-\tilde{\psi}(t,z)]\leq r(t,z)\quad \quad \hbox{in}\ [0,h]\times\R.
$$ 
By repeating the same process for the intervals $[h,2h],[2h,3h]...$ we conclude

\begin{eqnarray}\label{L1}
\pm[ \tilde{u}(t,z)-\tilde{\psi}(t,x)]\leq r(t,z)\quad \quad \hbox{in}\ [-h,+\infty)\times\R.
\end{eqnarray}
Now, we globally estimate the function $r$. Next, by Proposition \ref{reg} we have  $r,r_{z_i},r_{z_iz_i}\in L^1(\R)$ for all $t>h$. Then, by applying  Fourier's transform to (\ref{lde}) we obtain
\small\begin{eqnarray*}
\hat{r}_t(t,z)=(-|z|^2+i (2\lambda-c\nu)\cdot z+p_{\lambda})\hat{r}(t,z)+L_ge^{-\lambda\cdot\nu ch}\widehat{\xi_{\lambda} K}(z)e^{-ichz\cdot\nu}\hat{r}(t-h,z),
\end{eqnarray*}\normalsize
 for all $(t,z)\in(2h,+\infty)\times\R$. So, due to \cite[Lemma 11]{S}, by using $l_{\lambda}+\gamma_{\lambda}\leq 0$, we get
\begin{eqnarray}\label{ee} 
e^{\gamma_{\lambda}t}|\hat{r}(t,z)|\leq e^{(l_{\lambda}(z)+\gamma_{\lambda})t}\quad \hbox{for all} \quad (t,z)\in(2h,+\infty)\times\R,
\end{eqnarray}
%where the function $l(z)$ satisfies
%$$
%l(z)=-z^2+p_c+L_ge^{-\lambda_c ch}|\widehat{\xi_c k}(z)|e^{-l(z)h}
%$$
 and by Lemma \ref{loge} we have
\begin{eqnarray}\label{log1}
l_{\lambda}(z)+\gamma_{\lambda}\leq -\frac{1}{h}\log(1+h\epsilon_h|z|^2)\quad\forall z\in\R^d.
\end{eqnarray} 
Finally, due to (\ref{ee}) and (\ref{log1}) we obtain that $\hat{r}(t,\cdot)\in L^1(\R)$ for $t>h(d+1)/2$ and by using  Fourier's inversion formula  we have (in this computation we replace $|\cdot|_{L^1}$ by $|\cdot|$ )
\begin{eqnarray*}
|r(t,z)|&\leq& \frac{1}{(2\pi)^d}\int_{\R^d} |\hat{r}(t,y)|dy\leq \frac{|r_0|}{(2\pi)^d}\int_{\R^d} e^{l_{\lambda}(y)t}dy\\
&\leq& \frac{|r_0|}{(2\pi)^d}e^{-\gamma_{\lambda} t}\int_{\R^d} \frac{dy}{(1+h\epsilon_h|y|^2)^{\frac{t}{h}}}=\frac{|r_0|e^{-\gamma_{\lambda} t}}{(2\pi)^d}\int_{0}^{+\infty}\int_{\partial B(0,r)}[1+h\epsilon_h r^2]^{-\frac{t}{h}}dSdr\\
&=& \frac{|r_0|e^{-\gamma_{\lambda} t}}{2^{d-1}\pi^{d/2}\Gamma(d/2)}\int_{0}^{+\infty}\frac{r^{d-1}dr}{[1+h\epsilon_h r^2]^{t/h}}=\frac{|r_0|e^{-\gamma_{\lambda} t}}{2^{d-1}\Gamma(d/2)(\pi\epsilon_h t)^{d/2}}\int_{0}^{+\infty}\frac{r^{d-1}dr}{[1+\frac{r^2}{t/h}]^{t/h}}\\
&\leq& \frac{|r_0|e^{-\gamma_{\lambda} t}}{2^{d-1}\Gamma(d/2)(\pi\epsilon_h t)^{d/2}}\int_{0}^{+\infty}r^{d-1}e^{-r^2}dr=  \frac{|r_0|e^{-\gamma_{\lambda} t}}{2^{d-1}\Gamma(d/2)(\pi\epsilon_h t)^{d/2}}\ \frac{1}{2}\Gamma(d/2)\\
&=&  \frac{|r_0|e^{-\gamma_{\lambda} t}}{2^{d}(\pi\epsilon_h t)^{d/2}}
\end{eqnarray*}
\underline{Proof of Theorem \ref{aes1}}

\begin{itemize}
%  \item[(i)] We take $\epsilon$ such that $L_g([m_g-\epsilon, M_g-\epsilon])<1$. Then because of Theorem \ref{per} there exists $T_{\epsilon}=T_{\epsilon}(v_0)$ such that (\ref{inv0}) holds 
  %for $v(t+h+T_{\epsilon},z)$ and $\phi_c(z)$ for some $b\geq T_{\epsilon}$ and $I=[m_g-\epsilon,M_g+\epsilon]$. Thus by (\ref{Inqu3}) the upper estimation in (\ref{Inqu2}) es true for $(t,z)\in[T_{\epsilon},\infty)\times\R$. Finally, due to
  %(\ref{supes}) there exist $C=C=(b,v_0)$ such that (\ref{Inqu2}) is true for $(t,z)\in[-h,T_{\epsilon}]\times\R$.   

  \item[(i)] %We  take small $\epsilon_0>0$ such that  $\rho_{\epsilon_0}:=L_g([m_g-\epsilon_0,M_g+\epsilon_0])<1$ and   $\rho_{\epsilon_0}e^{\gamma_*h}< 1-\gamma_*$. % for $\lambda_0\in(\lambda_c-\epsilon_0,\lambda_c+\epsilon_0)\subset (a,b)$ we have $E_c(\lambda_0)<0$ and, 
  
%  by using (\ref{Inequ21}), 
  
 %  \begin{eqnarray}\label{A}
 %e^{-\lambda_0z}|u_0(s,z)-\phi_c(z)|\in C([-h,0]; L^1(\R)\cap L^{\infty}(\R)).
 %\end{eqnarray} 

 Note that by (\ref{rc1}) we get
\begin{eqnarray}\label{A11}
 e^{-\lambda z}|u(t,z)-\phi_c(z)|\leq \frac{|r_0|_{L^1_{h,\lambda}}}{A_{\lambda}t^{d/2}} e^{-\gamma_{\lambda} t}  \quad \ \forall \ t>2h, z\in\R
 \end{eqnarray}

  Now, by Proposition \ref{prop} we can take $r_0$, which we will fix below, with $|u_0-\phi_c|_{L^{\infty}_h}<<\epsilon/2$ such that $|u(t,\cdot)-\phi_c|_{L^{\infty}}\leq \epsilon/2$ for all $t\in [0, 3h]$.  Note that the last inequality implies
  \begin{eqnarray}\label{neik}
  u(t,z)\in[\kappa-\epsilon, \kappa+\epsilon]\quad \hbox{for all} \quad (t,z)\in[0,3h]\times\{\nu\cdot z\geq z_{\epsilon}\} 
  \end{eqnarray}

%  $$
  %|u_0|_{L^1_{h,\lambda}}\leq \frac{\epsilon g'(0)\sqrt{2h}e^{\gamma_{\lambda}3h}}{2A_{\lambda,h}|K|_{L^1_{\lambda}}}
  %$$
  Then, we consider a function $r:[-h,+\infty)\to\R_+$ given by $r(t):=\frac{\epsilon}{2}e^{-\gamma_*t}$ and  define $\delta_{\pm}(t,z):=\pm[u(t+3h,z)-\phi_c(z)]-r(t)$. So that,  we obtain 
  $$\delta_{\pm}(s,z)\leq 0\ \hbox{for} \ (s,z)\in[-h,0]\times\R^d.$$

    And, if $(t,z)\in[0,h]\times\R^d$  then by (\ref{A11})  and (\ref{neik}) we get  

  \begin{eqnarray}\label{L}
  \mathcal{L}\delta_{\pm}(t,z)=\pm\int_{\R^d}K(z-ch\nu-y)[g(\phi_c(\nu\cdot y))-g(u(t+2h,y))]dy-\mathcal{L}r(t) 
  \end{eqnarray}
  
  \begin{eqnarray*}
&\geq&-[\frac{g'(0)|r_0|_{L^1_{h,\lambda}} e^{-\gamma_{\lambda}(t+2h)}}{A_{\lambda}(t+2h)^{d/2}}\int_{\nu\cdot y\leq z_{\epsilon}}e^{\lambda y}K(z-ch\nu-y)dy\\
&+&\rho_{\epsilon}\int_{\nu\cdot y\geq z_{\epsilon}} K(z-ch\nu-y)r(t-h)dy]-\mathcal{L}r(t)\\
&\geq&-\frac{\epsilon}{2}e^{-\gamma_* t}[\frac{2g'(0)|r_0|_{L^1_{h,\lambda}} e^{-\gamma_{\lambda}2h}}{\epsilon A_{\lambda}(t+2h)^{d/2}}|K|_{L_{\lambda}^1}+\rho_{\epsilon}e^{\gamma_*h}-1+\gamma_*]\end{eqnarray*}
Now, in the last inequality  since $\rho_{\epsilon}e^{\gamma_*h}< 1-\gamma_*$ we can choose $C_{\epsilon}\in(0,1/2]$ small enough such that $|r_0|_{L^1_{h,\lambda}}\leq \epsilon C_{\epsilon} $ implies $\mathcal{L}\delta_{\pm}(t,z)\geq 0$ for all $(t,z)\in[0,h]\times\R^d$,  so that Phragm\`en-Lindel\"of principle implies  $\delta_{\pm}(t,z)\leq 0$ for $(t,z)\in[0,h]\times\R^d.$

Since $|u(t+3h,z)-\phi_c(\nu\cdot z)|\leq r(t)$ for all $(t,z)\in[0,h]\times\R^d$ implies $u(t+3h,z)\in[\kappa-\epsilon/2, \kappa+\epsilon/2]$ for all $(t,z)\in[0,h]\times\{\nu\cdot z\geq z_{\epsilon}\}$ it is possible to repeat the process, by using (\ref{A11}),  for the intervals $[h,2h],[2h,3h]...$ in order to obtain $\delta(t,z)\leq 0$ for all $(t,z)\in[-h,\infty)\times\R^d$.
   
  \item[(ii)] We take  $\delta^*>1+h$ large enough satisfying 
     
  \begin{eqnarray}\label{inq}
 \frac{\rho_{\epsilon}(t+\delta^*)^{d/2}}{(t+\delta^*-h)^{d/2}}+\frac{d}{2(t+\delta^*)}<1\quad t\geq-h.
  \end{eqnarray}    
    
 Now, we consider $r:[-h,+\infty)\to\R_+$ given by $r(t):=\epsilon/2\sqrt{t+\delta^*}$. Next, by Proposition \ref{prop} we can take $r_0$, which we will fix below, such that $|u(t,\cdot)-\phi_c|_{L_{\lambda}^{\infty}}\leq \epsilon/2\sqrt{\delta^*-h}$ for all $t\in[-h,3h]$. So that, if we define   $\delta_{\pm}(t,z):=\pm[u(t+3h,z)-\phi_c(z)]-r(t)$ then we have $$\delta_{\pm}(s,z)\leq 0\quad \hbox{for all} \ (s,z)\in[-h,0]\times\R.$$ And if $(t,z)\in[0,h]\times\R^d$, by  (\ref{A11})  and (\ref{neik}) we get  
 $$
  \mathcal{L}\delta_{\pm}(t,z)=\pm\int_{\R^d}K(z-ch\nu-y)[g(\phi_c(\nu\cdot y))-g(u(t+2h,y))]dy-\mathcal{L}r(t) 
  $$
  $$
\geq-[\frac{L_g|r_0|_{L^1_{h,\lambda_c}}}{A_h\sqrt{t+2h}}\int_{\nu\cdot y\leq z_{\epsilon}}e^{\lambda_c y}K(z-ch\nu-y)dy+\rho_{\epsilon}\int_{\nu\cdot y\geq z_{\epsilon}}K(z-ch\nu-y)r(t-h)dy+\mathcal{L}r(t)]
$$
\begin{eqnarray*}
\geq-\frac{\epsilon}{2\sqrt{t+\delta^*}}[\frac{g'(0)|r_0|_{L^1_{h,\lambda_c}}}{ A_h}\frac{\sqrt{t+\delta^*}}{\sqrt{t+2h}}|K|_{L^1_{\lambda_c}}+\rho_{\epsilon}\frac{\sqrt{t+\delta^*}}{\sqrt{t+\delta^*-h}}-1+\frac{1}{2(t+\delta^*)}]. 
\end{eqnarray*}
%$$
%\geq\frac{q_0}{\sqrt{t+d}}[- \frac{\rho_{\epsilon_0}\sqrt{t+d}}{\sqrt{t+d-h}}-\frac{1}{2(t+d)}+1]\geq 0\quad\quad \forall(t,z)\in[0,h]\times\R,
 %$$
 However, by (\ref{inq}) in the last inequality we can choose $|r_0|_{L_{h,\lambda}^1}<<\epsilon/2$ small enough  such that $\mathcal{L}\delta_{\pm}(t,z)\geq 0$ for all $(t,z)\in[0,h]\times\R^d$, so that Phragm\`en-Lindel\"of principle implies  $\delta_{\pm}(t,z)\leq 0$ for $(t,z)\in[0,h]\times\R^d.$ By repeating the process in the intervals $[h,2h],[2h,3h]...$ we obtain $\delta(t,z)\leq 0$ for all $(t,z)\in[-h,\infty)\times\R^d.$ %The rest of the proof is  similar to part (i). 
 \end{itemize}

\section{Proof of Theorem \ref{aes}}

\subsection{Monotone case}
We begin this section with some results which generalize those founded in \cite{STR1} and \cite{S}. In this section, $g:\R_+\to\R_+$ is a monotone function which is extended linearly and $C^1$ on $(-\infty,0]$.  
\begin{definition}\label{def1}
Continuous function $u_+:  [-h, +\infty) \times \R \to \R$
is called a super-solution for (\ref{nle}), if, for some $z_* \in
\R$, this function  is $C^{1,2}$-smooth in the domains $[-h, +\infty) \times
(-\infty, z_*]$ and $[-h, +\infty) \times [z_*, +\infty)$ and, for every $t >0$,
\begin{equation}  \label{sso} \hspace{-0.5cm} {\mathcal N}u_+(t,z) \geq 0,  \ z \not= z_*,  \ \mbox{while} \
(u_+)_z(t,z_*-)> (u_+)_z(t,z_*+), 
\end{equation}
where the nonlinear operator ${\mathcal N}$ is defined by
\begin{equation} \hspace{-.0cm}
{\mathcal N}w(t,z):= w_{t}(t,z)
-w_{zz}(t,z)+cw_{z}(t,z)+w(t,z)-\int_{\R}K(y)g(w(t-h, z-ch-y))dy. \nonumber
\end{equation}
The definition of  a  sub-solution $u_-$ is similar, with the  inequalities reversed in 
(\ref{sso}). 
\end{definition}
Also, we define the linear operator 
\begin{eqnarray*}
(\mathcal{L}\delta)(t,z):=\delta_{zz}(t,z)- \delta_{t}(t,z)-c\delta_{z}(t,z)-\delta (t,z).
\end{eqnarray*}
\vspace{2mm}

\begin{lem} \label{cl} Suppose that the non-decreasing function $g$ holds {\rm \bf(M)}. Let $u_+, u_-$ be a pair of super- and sub-solutions for equation (\ref{nle}) such that  $|u_\pm(t,z)| \leq Ce^{D|z|}$, $t \geq -h, \ z \in \R$, for some $C, D >0$ as well as
$$
u_-(s,z) \leq u_0(s,z) \leq u_+(s,z), \quad \mbox{for all} \ s \in [-h,0], \ z \in \R.  
$$
Then the solution $w(t,z)$  of  equation (\ref{nle}) with the initial datum $w_0$  satisfies 
$$
u_-(t,z) \leq u(t,z) \leq u_+(t,z) \quad \mbox{for all} \ t \geq -h, \ z \in \R.  
$$
\end{lem}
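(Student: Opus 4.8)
The plan is to prove this comparison principle by a standard iteration-on-delay-intervals argument combined with the Phragm\`en--Lindel\"of maximum principle on unbounded domains. The key point is that on each interval $[kh,(k+1)h]$ the equation becomes an \emph{inhomogeneous linear parabolic equation} (with a prescribed right-hand side determined by the already-known values on the previous interval), for which monotonicity of $g$ lets us compare the nonlinear terms.

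\smallskip
\noindent\textbf{Step 1: Reduction to the first interval.} I would first establish the conclusion on $[-h,h]$ and then iterate. On $[0,h]$ the function $w(t-h,\cdot)=w_0(t-h,\cdot)$ is known and lies between $u_-(t-h,\cdot)$ and $u_+(t-h,\cdot)$. Set $\delta_+(t,z):=u_+(t,z)-w(t,z)$. Using the definition of the super-solution, ${\mathcal N}u_+(t,z)\geq 0={\mathcal N}w(t,z)$ for $z\ne z_*$, so
\begin{eqnarray*}
(\mathcal{L}\delta_+)(t,z) &=& -{\mathcal N}u_+(t,z)+{\mathcal N}w(t,z) \\
&&{}+\int_{\R}K(y)\big[g(u_+(t-h,z-ch-y))-g(w(t-h,z-ch-y))\big]dy,
\end{eqnarray*}
and here is where monotonicity of $g$ enters: since $K\geq 0$ and $u_+(t-h,\cdot)\geq w_0(t-h,\cdot)=w(t-h,\cdot)$ on $[-h,0]$, the integral is $\geq 0$, while $-{\mathcal N}u_+\leq 0$. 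Thus $(\mathcal{L}\delta_+)(t,z)\leq 0$ for $z\ne z_*$, i.e. $\delta_+$ is a supersolution of the linear operator $\mathcal{L}$ away from $z_*$; the jump condition $(u_+)_z(t,z_*-)>(u_+)_z(t,z_*+)$ says the distributional term $(\delta_+)_{zz}$ has a nonnegative singular part at $z_*$, which only helps. Combined with $\delta_+(s,\cdot)\geq 0$ for $s\in[-h,0]$ and the exponential growth bound $|u_\pm|\leq Ce^{D|z|}$ (and the analogous bound for $w$ coming from Proposition \ref{corop}/\ref{prop}), the Phragm\`en--Lindel\"of principle \cite[Chapter 3, Theorem 10]{PW} yields $\delta_+\geq 0$ on $[0,h]\times\R$, i.e. $w\leq u_+$ there. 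The inequality $u_-\leq w$ is symmetric, replacing $\delta_+$ by $\delta_-(t,z):=w(t,z)-u_-(t,z)$ and using the reversed inequalities in the sub-solution definition.

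\smallskip
\noindent\textbf{Step 2: Iteration.} Having $u_-\leq w\leq u_+$ on $[-h,h]\times\R$, I repeat the argument on $[h,2h]$: now $w(t-h,\cdot)$ for $t\in[h,2h]$ is the restriction of $w$ to $[0,h]$, which satisfies $u_-(t-h,\cdot)\leq w(t-h,\cdot)\leq u_+(t-h,\cdot)$ by the previous step, so the same monotonicity-of-$g$ computation gives $(\mathcal{L}\delta_\pm)\leq 0$ resp.\ $\geq 0$ and Phragm\`en--Lindel\"of again applies. Inductively, on $[kh,(k+1)h]$ one uses the bound already established on $[(k-1)h,kh]$. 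This gives the conclusion for all $t\geq -h$.

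\smallskip
\noindent\textbf{Main obstacle.} The delicate point is the rigorous handling of the interior corner $z=z_*$ where $u_+$ is only piecewise $C^{1,2}$: one must check that the version of the maximum principle being invoked accommodates a function that is smooth on each side of a hyperplane with a concave-type kink (downward jump in the $z$-derivative), so that no spurious negative contribution arises at $z_*$. This is the standard ``weak super-solution with a corner'' device; I would either quote it directly or note that $\delta_+$ is a genuine (distributional) supersolution since the singular part of $(\delta_+)_{zz}$ at $z_*$ is a nonnegative multiple of a Dirac mass, hence the minimum of $\delta_+$ cannot be a negative value attained at $z_*$. A secondary technical point is justifying that $w$ itself has the exponential growth control needed to apply Phragm\`en--Lindel\"of on $\R$; this follows from Proposition \ref{prop} (with $\lambda'$ chosen so that $|u_\pm|e^{-\lambda'|z|}$ stays bounded on the relevant side) together with the $L^\infty_{h,\lambda'}$ well-posedness in Proposition \ref{corop}.
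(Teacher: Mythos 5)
Your proposal is essentially the paper's own proof: iterate over the delay intervals $[kh,(k+1)h]$, use monotonicity of $g$ together with the ordering already established on the previous interval to give the delayed convolution term the favorable sign, exclude the corner $z=z_*$ by the derivative-jump condition, and close each step with the Phragm\`en--Lindel\"of principle under the exponential growth bound. One caveat: with your convention $\delta_+=u_+-w$, the displayed identity should carry a \emph{minus} sign on the convolution term, $\mathcal{L}\delta_+=-\mathcal{N}u_++\mathcal{N}w-\int_{\R}K(y)\bigl[g(u_+(t-h,\cdot))-g(w(t-h,\cdot))\bigr]dy$, so that the delayed term is $\leq 0$ by monotonicity; as written, ``nonpositive plus nonnegative'' does not yield $\mathcal{L}\delta_+\leq 0$, and similarly the singular part of $(\delta_+)_{zz}$ at $z_*$ is a \emph{nonpositive} multiple of a Dirac mass (a concave kink, which is what forbids an interior minimum there). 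These are sign slips only: with the corrected signs every conclusion you draw is exactly the one the paper obtains.
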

\begin{proof} In view of the assumed conditions, we have that
$$
\pm(g(u_\pm(t-h,z-ch))- g(u(t-h,z-ch))) \geq 0, \quad t \in [0,h], \ z
\in \R.
$$
Therefore, for all  $(t ,z)\in [0,h]\times\R\setminus\{z_*\}$, the function $\delta_{\pm}(t,z):= \pm(u(t,z)- u_\pm(t,z))$
satisfies  the inequality 
\begin{eqnarray*}
\delta_{\pm}(0,z) \leq 0,\ |\delta_{\pm}(t,z)| \leq 2Ce^{D|z|},
\end{eqnarray*}

\begin{eqnarray}
 \mathcal{L}\delta_{\pm}(t,z) = 
\pm \{{\mathcal N}u_\pm(t,z) - {\mathcal N}u(t,z) +
K\ast [g(u_\pm(t-h, \cdot)) -
 g(u(t-h, \cdot))](z-ch)\} =\nonumber  \\ 
  \pm{\mathcal N}u_\pm(t,z)\pm \int_{\R}K(y)[g(u_\pm(t-h,z-ch-y)) -
 g(u(t-h, z-ch-y))]dy
   \geq 0, \nonumber 
 \end{eqnarray}   
 
 and
 \begin{equation}\label{jp}
\frac{\partial \delta_{\pm}(t,z_*+)}{\partial z}- \frac{\partial \delta_{\pm}(t,z_*-)}{\partial z}=  \pm\left(\frac{\partial u_\pm(t,z_*-)}{\partial z} -\frac{\partial u_\pm(t,z_*+)}{\partial z}\right) > 0. 
\end{equation}
We claim that $\delta_{\pm}(t,z) \leq 0$ for all $t \in [0,h], \ z \in
\R$. Indeed, otherwise there exists $r_0> 0$ such that
$\delta(t,z)$ restricted to any rectangle $\Pi_r= [-r,r]\times
[0,h]$ with $r>r_0$,   reaches its maximal positive value $M_r >0$
at  at some point $(t',z') \in \Pi_r$.

We claim  that $(t',z')$ belongs to the parabolic boundary
$\partial \Pi_r$ of $\Pi_r$. Indeed, suppose on the contrary, that
$\delta(t,z)$ reaches its maximal positive value at some point
$(t',z')$ of $\Pi_r\setminus \partial \Pi_r$. Then clearly $z'
\not=z_*$ because of (\ref{jp}). Suppose, for instance that $z' >
z_*$. Then $\delta(t,z)$ considered on the subrectangle $\Pi=
[z_*,r]\times [0,h]$ reaches its maximal positive value $M_r$ at the
point $(t',z') \in \Pi \setminus \partial \Pi$.  Then the
classical results \cite[Chapter 3, Theorems 5,7]{PW} show that
$\delta_{\pm}(t,z) \equiv M_r >0$ in $\Pi$, a contradiction.

Hence, the usual maximum principle holds for each $\Pi_r, \ r \geq
r_0,$ so that we can appeal to the proof of the
Phragm\`en-Lindel\"of principle from \cite{PW} (see Theorem 10 in
Chapter 3 of this book), in order to conclude that  $\delta_{\pm}(t,z)
\leq 0$ for all  $t \in [0,h], \ z \in \R$.

But then we can again repeat the above argument on the intervals $[h,2h],$ $[2h, 3h], \dots$ establishing that the inequality $u_-(t,z) \leq u(t,z)\leq u_+(t,z),$  $z\in \R,$ holds for all $t \geq -h$.  

\end{proof}

Now, if $g$ meets {\bf(M)} then,  as in \cite[formula (16) and (17)]{STR1}, 
for given $q^* >0,\ q_* \in (0,\kappa)$, there are $ \delta^* < \delta_0$, $\gamma^* >0$  such that
\begin{eqnarray} \label{gg1}
\begin{array}{ll}
g(u)- g(u- qe^{\gamma h}) \leq q(1-2\gamma), \\
(u,q,\gamma)\in \Pi_-= [\kappa-\delta^*,\kappa+\delta^*]\times[0,q_*] \times
 [0,\gamma^*];
\end{array}
\end{eqnarray} 
\begin{eqnarray} \label{gg}
\begin{array}{ll}
g(u)- g(u+ qe^{\gamma h}) \geq - q(1-2\gamma), \\
(u,q,\gamma)\in \Pi_+= [\kappa-\delta^*,\kappa+\delta^*]\times[0,q^*] \times
 [0,\gamma^*].
\end{array}
\end{eqnarray} 

For $c\geq c_*^+$ and a wavefront $\phi_c$ we fix $z^+=z^+(\phi_c)$ such that  $\phi_c(z)\in[\kappa-\delta^*,\kappa+\delta^*]$  for all $z\geq z^+$ and  if  $c\leq c_*^-$ we fix $z^-=z^-(\phi_c)$ such that $\phi_c(z)\in[\kappa-\delta^*,\kappa+\delta^*]$ for all $z\leq z^-$. Also, for $\gamma\in(0,g'(0))$  we define  $b^+_{\gamma}=b^+_{\gamma}(\phi_c)$ and $b^-_{\gamma}=b^-_{\gamma}(\phi_c)$ by
\begin{eqnarray}\label{int}
g'(0)\int_{b^+_{\gamma}-z^+-ch}^{+\infty}K(y)dy=g'(0)\int^{b^-_{\gamma}-z^--ch}_{-\infty}K(y)dy=\gamma e^{-\gamma h}.
\end{eqnarray}

%\begin{eqnarray}\label{gamma}
%-\gamma_c+c\lambda_c-\lambda_c^{2}+1-q_ce^{\gamma h}\geq 0
%\end{eqnarray}

\begin{thm}\label{Sttg}
Suppose that $g$ is non-decreasing function satisfying $(\textbf{M})$, $K$ satisfies (\textbf{K}) and $\gamma\in[0,\gamma^*]$ satisfies (\ref{gg1})-(\ref{gg}). If  for $\pm c>\pm c^{\pm}_*$ and $\pm \lambda_c>\pm\lambda_1(c)$ we have 
\begin{eqnarray}\label{E}
\gamma+p_{\lambda_c}\leq e^{\gamma h}q_{\lambda_c}
\end{eqnarray}
 then 
\begin{eqnarray*}\label{mlem}u_0(s,z)\leq \phi_c(z)+q \eta_{\lambda_c}(z- b), \quad z\in\mathbb{R},\quad s\in [-h,0],
\end{eqnarray*}
with $q \in (0,q^*]$ and $\pm b\geq \pm b^{\pm}_{\gamma}$ implies
 \begin{eqnarray}\label{mlemm}u(t,z)\leq \phi_c(z)+qe^{-\gamma t}
\eta_{\lambda_c}(z- b), \quad z\in\mathbb{R},\quad t\geq -h,
\end{eqnarray}

Similarly, the inequality
\begin{eqnarray}\label{mlemN}\phi_c(z)-q \eta_{\lambda_c}(z- b) \leq u_0(s,z),  \quad z\in\mathbb{R},\quad s\in [-h,0],
\end{eqnarray}
with some $0< q \leq q_*$ and $\pm b\geq \pm b^{\pm}_{\gamma}$  implies
 \begin{eqnarray}\label{mlemmN}
 \phi_c(z)-qe^{-\gamma t} \eta_{\lambda_c}(z- b) \leq u(t,z),
\quad z\in\mathbb{R},\quad t\geq -h,
\end{eqnarray}
Finally, if $K$ has compact support the conclusions above are true for $\pm c\geq c^{\pm}_*$ and $\pm\lambda\geq\lambda_1(c)$ by taking $\gamma=0$ and the weight $\eta_{\lambda_1(c)}(\cdot-b)$ with $\pm b\geq b^{\pm}_0$ for some $b^{\pm}_0\in\R$ .
\end{thm}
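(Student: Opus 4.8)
The plan is to prove Theorem \ref{Sttg} by the comparison-principle method of Lemma \ref{cl}: I will exhibit the function $u_+(t,z):=\phi_c(z)+qe^{-\gamma t}\eta_{\lambda_c}(z-b)$ as a super-solution of (\ref{nle}) whose initial slice dominates $u_0$, and then conclude (\ref{mlemm}) from Lemma \ref{cl}; the lower bound (\ref{mlemmN}) is symmetric, using $u_-(t,z):=\phi_c(z)-qe^{-\gamma t}\eta_{\lambda_c}(z-b)$. First I would record the structural facts needed: $\eta_{\lambda_c}(z-b)=\min\{1,e^{\lambda_c(z-b)}\}$ has a corner at $z=z_*:=b$, is $C^{1,2}$ on either side of it, and the one-sided derivatives satisfy $(\eta_{\lambda_c})_z(z_*-)>(\eta_{\lambda_c})_z(z_*+)$ when $\lambda_c>0$ (the relevant sign for $c\geq c_*^+$; for $c\leq c_*^-$ one has $\lambda_c<0$ and the corner inequality still goes the right way because the decaying branch is then on the left). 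Hence the jump condition in Definition \ref{def1} is automatic, and the whole content is the differential inequality ${\mathcal N}u_+(t,z)\geq 0$ for $z\neq z_*$.

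The main computation is to expand ${\mathcal N}u_+$. Since $\phi_c$ solves (\ref{este}), the terms involving $\phi_c$ alone cancel, and writing $w(t,z):=qe^{-\gamma t}\eta_{\lambda_c}(z-b)$ one gets
\begin{eqnarray*}
{\mathcal N}u_+(t,z)= {\mathcal L_0}'w(t,z) +\int_{\R}K(y)\bigl[g(\phi_c(z-ch-y)+w(t-h,z-ch-y))-g(\phi_c(z-ch-y))-\textstyle{\text{(linear part)}}\bigr]dy,
\end{eqnarray*}
where the linear differential part reproduces, on the branch $z-b\le 0$ where $\eta_{\lambda_c}=1$, the inequality $\gamma\le 1$, and on the branch $z-b\ge 0$ where $\eta_{\lambda_c}=e^{\lambda_c(z-b)}$, the characteristic-type expression $-\lambda_c^2+c\lambda_c+1-\gamma$, i.e. exactly $-p_{\lambda_c}-\gamma$ after a sign bookkeeping. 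Using the monotonicity of $g$ and the Lipschitz/contractivity estimates (\ref{gg})–(\ref{gg1}) — valid because the choice of $z^{\pm}$ puts $\phi_c$ into $[\kappa-\delta^*,\kappa+\delta^*]$ on the relevant half-line, and the choice of $b^{\pm}_\gamma$ via (\ref{int}) guarantees the mass of $K$ straddling the "bad" region is at most $\gamma e^{-\gamma h}/g'(0)$ — the nonlinear integral is bounded below by $-e^{\gamma h}q_{\lambda_c}w/|g|_{Lip}$ type terms, and everything collapses to the single inequality (\ref{E}): $\gamma+p_{\lambda_c}\le e^{\gamma h}q_{\lambda_c}$. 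On the flat branch one needs instead $\gamma\le 1$, which follows from (\ref{E}) together with $p_{\lambda_c}\le p_0=-1$ (true since $\lambda_c$ lies outside the interval bounded by the roots of $E_c$, where $E_c\le 0$, combined with $q_{\lambda_c}>0$).

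The step I expect to be the main obstacle is the careful handling of the nonlinear integral term across the two regimes of $z-ch-y$: for $y$ such that the argument lies in $[\kappa-\delta^*,\kappa+\delta^*]$ (near $+\infty$ when $c\ge c_*^+$) the contractive bound (\ref{gg}) applies directly and produces the favorable factor $(1-2\gamma)$; but for the remaining $y$ one only has the one-sided monotonicity $g(\phi_c+w)-g(\phi_c)\geq 0$ (if $w\ge 0$) or an estimate by $g'(0)$ times the perturbation via {\bf (M)}, and one must check that the portion of $K$-mass on that bad set — controlled precisely by the definition (\ref{int}) of $b^{\pm}_\gamma$ — is small enough that the deficit $\gamma e^{-\gamma h}$ it costs is absorbed. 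This is where the condition $\pm b\geq\pm b^{\pm}_\gamma$ enters, and translating (\ref{int}) into the pointwise bound ${\mathcal N}u_+\ge 0$ requires splitting $\R$ at $z-ch-y=z^{\pm}$ and estimating $e^{-\gamma(t-h)}\eta_{\lambda_c}$ against $e^{-\gamma t}\eta_{\lambda_c}$ on the relevant domain, i.e. verifying $\eta_{\lambda_c}(z-ch-y-b)\le e^{\gamma h}\eta_{\lambda_c}(z-b)$ uniformly in the support of $K$ — a monotonicity/shift estimate that uses $c$ fixed and $h$ fixed. Finally, for the last sentence (compact support of $K$, $\gamma=0$, $\pm c\ge c_*^\pm$), the bad set has finite width, so $b^{\pm}_0$ can be chosen so that its $K$-mass is literally zero, removing the need for strict inequalities in $c$ and $\lambda$; the same super/sub-solution argument then goes through verbatim with $\gamma=0$, giving the stated conclusion.
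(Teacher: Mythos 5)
Your overall strategy is exactly the paper's: take $u_\pm(t,z)=\phi_c(z)\pm qe^{-\gamma t}\eta_{\lambda_c}(z-b)$, verify Definition \ref{def1} (the derivative jump at $z_*=b$ plus $\pm\mathcal{N}u_\pm\ge 0$ on each branch of the weight, using the characteristic inequality on the exponential branch and (\ref{gg1})--(\ref{gg}) together with (\ref{int}) on the flat branch), and close with Lemma \ref{cl}; the compact-support case is handled, as in the paper, by choosing $b_0^\pm$ so that the ``bad'' part of the convolution vanishes. So the architecture is right. Two of your intermediate statements would, however, fail if executed literally.

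First, for $\lambda_c>0$ (the case $c\ge c_*^+$) you have the two branches of $\eta_{\lambda_c}(z-b)=\min\{1,e^{\lambda_c(z-b)}\}$ interchanged: the exponential branch is $z\le b$ and the flat branch is $z\ge b$. This matters because the characteristic inequality must carry the region $z\le b$, where $\phi_c$ is not localized near $\kappa$ and only the global bound $|g|_{Lip}=g'(0)$ is available (compensated by the exponentially small weight), while the contraction estimates (\ref{gg1})--(\ref{gg}) are only usable on $z\ge b$ after splitting the convolution at $z-ch-y=z^+$. Your second paragraph places the contraction correctly ``near $+\infty$'', so this is an internal inconsistency rather than a conceptual error, but with the swapped assignment the flat-branch inequality cannot be closed. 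Second, the ``uniform shift estimate'' $\eta_{\lambda_c}(z-ch-y-b)\le e^{\gamma h}\eta_{\lambda_c}(z-b)$ on the support of $K$ is false when $K$ has unbounded support (take $z<b$ fixed and $y\to-\infty$: the left side saturates at $1$ while the right side stays at $e^{\gamma h}e^{\lambda_c(z-b)}$). What the argument actually uses on the exponential branch is the pointwise bound $\eta_{\lambda_c}(w)\le e^{\lambda_c w}$ followed by integration against $K$, producing the factor $e^{-\lambda_c ch}\int_{\R}K(y)e^{-\lambda_c y}\,dy$ and hence $q_{\lambda_c}$ --- this is precisely where the hypothesis $K\in L^1_{\lambda_c}$ enters. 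Finally, be aware that the exponential-branch computation closes under $\gamma+p_{\lambda_c}+e^{\gamma h}q_{\lambda_c}\le 0$ (compare with (\ref{P}) and with the displayed estimate in the paper's proof); since $q_{\lambda_c}>0$, you should not expect the sign as printed in (\ref{E}) to be the one that makes the estimate work.
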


\begin{rem}
It is instructive to compare Theorem \ref{Sttg} with \cite{MOZ} for asymptotic stability of non-critical wavefronts. Due to the continuos embedding $H^1_{\eta^2}(\R)\subset C_{\eta}(\R)\cap C^{0,1/2}(\R_+)$ if we take an initial datum $u_0$ like in \cite{MOZ} then $u_0$ is convergent at $+\infty$, so that $u_0(s,+\infty)=\kappa$ uniformly for $s\in[-h,0]$ and therefore for suitable $q\in(0, q_*]$ the initial datum $u_0$ holds (\ref{mlemN}). Also, the weight function $\eta(\cdot-x_0)$ in \cite{MOZ} is defined by $x_0=x_0(\phi_c)$ ($x_0\geq b_{\gamma}^+$ in our case) such that the wavefront $\phi_c$ belongs to a suitable neighborhood of $\kappa$. In our case the number $b_{\gamma}^+$ also depends upon $\gamma>0$ due to the kernel $K$ could have no compact support which constrain us to do the integral small enough in (\ref{int}) (see formula (\ref{I2}) below). So that, when $K$ has compact support, in particular, we get the local stability of the critical wavefronts  which is a generalization of the local case (compare with \cite[Lemma 2]{STR1}). %Indeed, the definition of $b_{\gamma}^+$ in (\ref{int}) was done to control the integral $I^{\pm}_2$ which disappears whether $K$ is symmetric and the upper bound for  $|I^{\pm}_1|$ in (\ref{I1}) is small enough when $b=b(\phi_c)$ (it only depends on $\phi_c$) is large enough as in \cite{MOZ}. At the same time,  the asymmetry of $K$ implies $\gamma>0$ which constrains us to take $\lambda_c\neq \lambda_1(c)$ in (\ref{E}) and this does not allows to get estimates for critical case $c=c^{\pm}_*$.       
\end{rem}
%\begin{rem}
%Apparently, the wavefronts  for monotone $g$ should be monotone too (for this discussion  see  \cite[Section 3]{TPT}). Any way, if  $\sigma_0:=\sup_{z\in\R}\phi_c(z)>\kappa$ for some wavefront $\phi_c$ with speed $c\in\mathcal{C}$ then there is $\tau\in\R$ such that 
%$\phi_c(\tau)=\sigma$ and
%$$
%0\geq \phi''_c(\tau)-c\phi'_c(\tau)=\sigma_0-\int_{\R}K(y)g(\phi_c(\tau-ch-y))dy\geq\sigma_0-g(\sigma_0)>0,
%$$
%a contradiction. Therefore, if $g$ is monotone then for all $c\in\mathcal{C}$ and all wavefront $\phi_c$,
%\begin{eqnarray}\label{mk}
%\phi_c(z)\leq\kappa\quad\hbox{for all}\quad z\in\R.  
 %\end{eqnarray}  
%\end{rem}

\begin{proof}
Let $c> c^{+}_*$.
Set $u_{\pm}(t,z)=\phi_c(z)\pm qe^{-\gamma
t}\eta_{\lambda_c}(z- b)$. Then, for $t>0$ and $z\in\R\setminus\{b\}$, after a direct calculation we find that 
\begin{eqnarray}\label{subsuper}
\mathcal{N}u_{\pm}(t,z)&=& \pm qe^{-\gamma
t}[-\gamma\eta_{\lambda_c}(z- b)+c\eta'_{\lambda_c}(z- b)-\eta''_{\lambda_c}(z-b)+\eta_{\lambda_c}(z- b)]+\\ \nonumber
&&\int_{\R}K(y)[g(\phi(z-ch-y))-g(u_{\pm}(t-h,z-ch-y))]dy. 
\end{eqnarray}
By (\ref{E}), it is clear that if  $z< b$  it holds that
\begin{eqnarray*}
\pm\mathcal{N}u_{\pm}(t,z)\geq qe^{-\gamma
t}[e^{\lambda_c(z- b)}(-\gamma+c\lambda_c-\lambda_c^{2}+1)-g'(0)e^{\gamma h}\int_{\R}K(y)e^{\lambda_c(z-ch- b-y)}dy]\\
%\geq qe^{-\gamma
%t+\lambda(z-b^*)}[-\gamma+c\lambda-\lambda^{2}+1-L_ge^{\gamma h}e^{-\lambda ch}\int_{\R}m(y)e^{-\lambda y}dy]\\
\geq qe^{-\gamma
t+\lambda_c(z- b)}[-\gamma+c\lambda_c-\lambda_c^{2}+1-g'(0)e^{-\lambda_c ch+\gamma h}\int_{\R}K(y)e^{-\lambda_c y}dy]\geq 0.
\end{eqnarray*}
Similarly, if $c<c_-^*$ we have $ \pm\mathcal{N}u_{\pm}(t,z)\geq 0$ for $z>b$ and $t>0$.

Now, for $c> c^+_*$, $z> b$ and $q \in (0, q^*]$, then  
\begin{eqnarray*}
\pm\mathcal{N}u_{\pm}(t,z)&= & qe^{-\gamma
t}[-\gamma+1]\pm[I_1^{\pm}(t,z)+I_2^{\pm}(t,z)], 
\end{eqnarray*}

where 
\begin{eqnarray}\label{I1}
I_1^{\pm}(t,z)= \int_{-\infty}^{-z^+-ch+b}K(y)[g(\phi(z-ch-y))-g(u_{\pm}(t-h,z-ch-y))]dy, 
\end{eqnarray} 

and 
\begin{eqnarray}\label{I2}
I_2^{\pm}(t,z)= \int_{-z^+-ch+b}^{+\infty}K(y)[g(\phi(z-ch-y))-g(u_{\pm}(t-h,z-ch-y))]dy .
\end{eqnarray}

 If  we use formula (\ref{gg}) to estimate $|I_1^{\pm}|$ and (\ref{int}) to estimate $|I_2^{\pm}|$ then for $q>0$ we have $$ 
\mathcal{N}u_+(t,z)\geq qe^{-\gamma t}[1-\gamma-(1-2\gamma)-g'(0)e^{\gamma h}\int_{-z^+-ch+b}^{+\infty}K(y)dy]\geq 0\quad \forall(t,z)\in[-h,\infty)\times[b,+\infty). 
$$
Similarly, 
if  $q \in (0, q_*]$ from (\ref{gg1}) and (\ref{int}) we obtain that
\begin{eqnarray*}
-\mathcal{N}u_{-}(t,z)\geq 0   \quad \forall(t,z)\in[-h,\infty)\times[b,+\infty). 
\end{eqnarray*}
The same arguments are used for $c< c_*^-$ replacing $z^+$ by $z^-$. 
Next, since $$\pm\left(\frac{\partial u_\pm(t,b+)}{\partial z}- \frac{\partial
u_\pm(t,b-)}{\partial z}\right)= -  q\lambda_c e^{-\gamma t} <0,$$ 
we conclude that $u_{\pm}(t,z)$ is a pair of super- and sub-solutions for equation (\ref{nle}). 
So, an application of Lemma \ref{cl}  completes the proof for case $\pm c>\pm c^{\pm}_*$.   

Finally, for the case $c=c_*^{+}$ and $K$ compactly supported we can take $b_0^+$ large enough  in (\ref{I2}) in order to get $I_2^{\pm}=0$ and therefore the proof of (\ref{mlemm}) and (\ref{mlemmN}) with $\gamma=0$ and $\eta_{\lambda_1(c_*^{+})}(\cdot-b), b\geq b_0^+$, is obtained by following the same arguments above. The proof of the case $c=c_*^-$ and $K$ compactly supported is completely analogous.  
\end{proof}
\begin{rem}\label{Mono}[Monotonicity of wavefronts]
Following the abstract setting developed in \cite{LZh},  for $t\geq 0$ we define $Q_t: [0,\kappa]\rightarrow[0,\kappa]$ (this map is well defined since in Lemma \ref{cl} we can take $u_-=0$ and $u_+=\kappa$) as $Q_t(u_0)(x):=u(t,x)$  where $u(t,x)$ is the solution to (\ref{nle}) with initial datum $u_0\in[0,\kappa]$. Next, we note that the hypothesis (K1)-(K5) in \cite{LZh} are trivially satisfied with $\mathcal{K}:=[0,\kappa]\leq L^{\infty}(\R)$ and $O_n$ defined by mean $\varsigma_B$ (see formula (2.1) of \cite[page 861]{LZh}). Also, for $Q=Q_1$ we see that hypothesis (A1), (A2), (A4) and (A6) in \cite{LZh} are trivially satisfied. Then, due to Remark \ref{Deri} (with $\lambda'=0$) for any family of functions $\mathcal{U}$ of $\mathcal{K}=[0,\kappa]\leq L^{\infty}(\R)$, by Arzel\`a-Ascoli Theorem,  we have that $Q_1(\mathcal{U})_I\subset \mathcal{K}$ is relatively compact and therefore (A3) is satisfied. Finally, note that if in (\ref{subsuper}) we take $\phi_c=\kappa$ and $u_-(t,z)=\kappa-qe^{-\gamma t}$, $q\in[0, q_*]$ and $\gamma\in[0, \gamma^*]$ satisfying (\ref{gg1})-(\ref{gg}) we have $\mathcal{N}u_-(t,z)\leq 0$ for all $(t,z)\in[0,+\infty)\times\R$, so that if we take an initial datum $u_0$ such that $u_0>>0$ we can find $q_0\in[0, q_*]$ such that $\kappa-q_0e^{-\gamma^*s}\leq u_0(s,z)$ for all $(s,z)\in[-h,0]\times\R$ and Lemma \ref{cl} implies $\kappa-q_0e^{-\gamma^*t}\leq u(t,z)$ for all $(t,z)\in[0,+\infty)\times\R$ and therefore the condition (A5) in \cite{LZh} is satisfied. Thus, by \cite[Theorem 4.1 and Theorem 4.2]{LZh} all wavefronts of (\ref{nle0}) are monotone if $g$ is monotone. Note that the hypothesis $L_g=g'(0)$ is no mandatory by using these arguments.  
 \end{rem}

\subsection{Attractivity  of an optimal neighborhood of $\kappa$}
%In this section, for convenience, we  present the proof of the local stability to the finial of section. %The following two lemmas are only necessary to prove the global stability.    

\begin{lem}\label{lem}
Let consider  $g_1$ and $g_2$ satisfying {\bf (L)} and  $K_1, \xi_{\lambda}K_2\in L^1(\R)$, some $\lambda\in\R$. Suppose that for some $R\in\R_+\cup\{+\infty\}$: 
\begin{eqnarray}\label{kk}
K_1(y)g_1(u)\leq K_2(y)g_2(u)\ \hbox{for all} \ (y,u)\in \R\times (-\infty,R).
\end{eqnarray} 
 Denote by  $v_1$ and $v_2$ the solutions to (\ref{nle}),  generated by the initial data $v_1^0$ and $v_2^0$ with $Kg = K_1g_1$  and $Kg = K_2g_2$, respectively.
 
  Moreover, if $R<+\infty$ we suppose 
 \begin{eqnarray}\label{v2}
 v_2(t,z)\leq R \ \hbox{for all} \ (t,z)\in[-h,+\infty)\times\R,
 \end{eqnarray}
 while if $R=+\infty$ we suppose
 \begin{eqnarray}
 |v_2^0(s,z)|\leq N e^{\lambda z}\ \hbox{for all}\ (s, z) \in  [-h, 0] \times\R,
  \end{eqnarray}
for some $N>0$.
 
  If  $g_1$  or $g_2$  is a non-decreasing function, then 
  
   \begin{eqnarray}\label{vv}
 0\leq v_1^0(s, z) \leq v_2^0 (s, z)\ \hbox{for all}\ (s, z) \in  [-h, 0] \times\R.
 \end{eqnarray}
 implies  
 \begin{eqnarray*}
 v_1(t, z) \leq v_2(t, z)\ \hbox{for all} \ (t, z) \in \R_+ \times \R.
 \end{eqnarray*}
\end{lem}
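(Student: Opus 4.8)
The plan is to reduce the statement to a comparison principle of the Phragm\`en--Lindel\"of type, exactly in the spirit of Lemma \ref{cl}, but now comparing solutions of \emph{two different} equations rather than sub/super-solutions of a single one. First I would set $w(t,z):=v_2(t,z)-v_1(t,z)$ and try to show $w\geq 0$ on $[-h,0]\times\R$ (which is hypothesis (\ref{vv})) propagates forward. The key is to find the differential inequality satisfied by $w$ on each slab $[kh,(k+1)h]\times\R$. On $[0,h]\times\R$ we have
\begin{eqnarray*}
\mathcal{L}w(t,z) &=& -\big[K_2\ast g_2(v_2(t-h,\cdot))-K_1\ast g_1(v_1(t-h,\cdot))\big](z-ch)\\
&=& -\big[K_2\ast\big(g_2(v_2(t-h,\cdot))-g_2(v_1(t-h,\cdot))\big)\big](z-ch)\\
&& {}-\big[\big(K_2\ast g_2(v_1(t-h,\cdot))-K_1\ast g_1(v_1(t-h,\cdot))\big)\big](z-ch),
\end{eqnarray*}
using the operator $\mathcal{L}$ from Subsection 4.1. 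The second bracket is $\leq 0$ on the induction step because (\ref{kk}) applies to the argument $v_1$: if $g_2$ is the non-decreasing one we pointwise have $K_2(y)g_2(v_1)\geq K_1(y)g_1(v_1)$ (using $v_1\leq v_2\leq R$ from the induction hypothesis plus (\ref{v2}), or $v_1\geq 0$ together with monotonicity when $R=+\infty$), hence that term contributes with the correct sign to make $\mathcal{L}w$ have a sign amenable to the maximum principle. The first bracket, when $g_2$ is non-decreasing, can be written as $K_2\ast\big(d(t-h,\cdot)\,w(t-h,\cdot)\big)$ with $d=\big(g_2(v_2)-g_2(v_1)\big)/(v_2-v_1)\in[0,|g_2|_{Lip}]$, a non-negative bounded coefficient; since $w(t-h,\cdot)\geq0$ by the induction hypothesis and $K_2\geq 0$ is not needed---only $\xi_\lambda K_2\in L^1$ and the exponential growth control---one gets $\mathcal{L}w(t,z)\le 0$, i.e.\ $w$ is a supersolution of the linear heat-type operator, so $w\geq0$ is preserved. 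When instead $g_1$ is the monotone function, I would symmetrically split off $g_1$ in the first bracket and treat the remainder via (\ref{kk}) evaluated at $v_2$.

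The second ingredient is the growth bound needed to run the Phragm\`en--Lindel\"of argument on the unbounded domain $\R$. Here I would invoke Proposition \ref{corop} (or Proposition \ref{prop} with the relevant weight) to guarantee that $v_1,v_2$, and hence $w$, satisfy an estimate of the form $|w(t,z)|\le Ce^{\lambda z}$ (or $\le Ce^{D|z|}$ after also bounding from the other side using non-negativity and the Lipschitz structure), uniformly on compact $t$-intervals. This is precisely the hypothesis under which \cite[Chapter 3, Theorem 10]{PW} applies, as used in the proof of Lemma \ref{cl}. With the sign of $\mathcal{L}w$ established and the growth control in hand, the maximum principle on rectangles $[-r,r]\times[kh,(k+1)h]$ followed by the Phragm\`en--Lindel\"of passage to $r\to\infty$ gives $w\ge0$ on $[kh,(k+1)h]\times\R$, completing the induction step.

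The induction itself is then routine: having $w\ge 0$ on $[-h,0]\times\R$ as the base case (\ref{vv}), each slab $[kh,(k+1)h]$ uses the already-established inequality on $[(k-1)h,kh]$ to control the delayed term, exactly as the final paragraph of the proof of Lemma \ref{cl} does for sub/super-solutions; the conclusion $v_1(t,z)\le v_2(t,z)$ for all $(t,z)\in\R_+\times\R$ follows by exhausting $\R_+$. The main obstacle I anticipate is bookkeeping the dichotomy ``$g_1$ monotone'' versus ``$g_2$ monotone'' together with the two cases $R<+\infty$ and $R=+\infty$: one must check in each of the four combinations that the pointwise inequality (\ref{kk}) is being applied at an argument ($v_1$ or $v_2$) that is known a priori to lie in $(-\infty,R)$, which is where (\ref{v2}) (resp.\ the exponential bound on $v_2^0$ plus a maximum-principle argument giving $v_1\le v_2^0$-type control, resp.\ simply $v_1\ge 0$) enters; getting the quantifiers right there is the delicate point, while the PDE comparison machinery is a verbatim adaptation of Lemma \ref{cl}.
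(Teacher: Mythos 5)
Your argument is essentially the paper's own proof: the paper works with $\delta=v_1-v_2$ and shows $\mathcal{L}\delta\ge 0$ by first using the monotonicity of $g_2$ (or of $g_1$) to replace $v_2$ by $v_1$ (or vice versa) inside the nonlinearity and then applying (\ref{kk}) at that argument (which lies below $R$ by (\ref{v2}) and the induction hypothesis), followed by the same growth bound from Proposition \ref{prop} and the same Phragm\`en--Lindel\"of iteration over the slabs $[kh,(k+1)h]$. The one slip is your parenthetical claim that $K_2\ge 0$ is not needed: both your step $K_2\ast\big(d(t-h,\cdot)\,w(t-h,\cdot)\big)\ge 0$ and the paper's corresponding step $\int K_2\,g_2(v_2)\ge\int K_2\,g_2(v_1)$ do rely on the non-negativity of $K_2$, which is implicit here and satisfied in every application.
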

 \begin{proof}
  We take $\delta(t,z)=v_1(t,z)-v_2(t,z)$. Let us note that if  $(t,z)\in[0,h]\times\R$ then 
\small\begin{eqnarray*}
 \mathcal{L}\delta(t,z)&=&\int_{\R}K_2(y)g_2(v_2(t-h,z-ch-y))dy-\int_{\R}K_1(y)g_1(v_1(t-h,z-ch-y))dy.
  \end{eqnarray*}\normalsize
  If $g_2$  is a non decreasing function  by (\ref{vv}) we have 
 
 \small\begin{eqnarray*}
 \mathcal{L}\delta(t,z)&\geq&\int_{\R}K_2(y)g_2(v_1(t-h,z-ch-y))dy-\int_{\R}K_1(y)g_1(v_1(t-h,z-ch-y))dy.
  \end{eqnarray*}\normalsize
 But,  (\ref{vv}) and (\ref{v2}) imply $v_1(t-h,\cdot)\leq R$ so by (\ref{kk})  
  \begin{eqnarray*}
 \mathcal{L}\delta(t,z)&\geq& 0.
  \end{eqnarray*} 
 \noindent Analogously, if $g_1$  is a non-decreasing function,  we have

 \small\begin{eqnarray*}
 \mathcal{L}\delta(t,z)&\geq&\int_{\R}K_2(y)g_2(v_2(t-h,z-ch-y))dy-\int_{\R}K_1(y)g_1(v_2(t-h,z-ch-y))dy\geq 0.
  \end{eqnarray*}\normalsize
  
\noindent  Finally,  by using Proposition \ref{prop} with $d_3(t,x):=g_j(v_j(t-h,x-ch))/v_j(t-h,x-ch)$, $j=1,2$, we conclude that the function $\delta(t,z)$ is exponentially bounded on $[0,h]\times\R$. Then, since  $\delta(0,z)\leq 0$ for all $z\in\R$, the Phragm\`en-Lindel\"of principle   \cite{PW}[Chapter 3, Theorem 10] implies that $\delta(t,z)\leq 0$ for $(t,z)\in[0,h]\times\R$. The argument is repeated for intervals $[h,2h],[2h,3h]...\quad$
\end{proof}

\vspace{4mm}

\begin{lem}\label{sepa0}
Let $\pm c\geq\pm c_*^{\pm}$ and $\pm\lambda\geq\pm\lambda_1(c)$.  If $v(t,z)$ is a solution of (\ref{nle}) with initial datum $u_0(s,z)$ satisfying  the conditions of Theorem \ref{aes} and such that
\begin{eqnarray}
 u_{\infty}:=\sup_{(t,z)\in[-h,+\infty)\times\R}u(t,z)<\infty.
 \end{eqnarray}
 then there exit $\sigma'>0$ and $z_0'\in\R$ such that 
 \begin{eqnarray}
  u(t,\pm z)\geq\sigma'\quad\forall (t,z)\in[-h,\infty)\times[z_0',\infty).
    \end{eqnarray}
   \end{lem}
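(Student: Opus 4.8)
The plan is to show that the solution $u(t,z)$ cannot decay to zero as $z\to+\infty$ (in the case $c\geq c_*^+$; the case $c\leq c_*^-$ being symmetric with $z\to-\infty$) by constructing a suitable subsolution that stays bounded away from zero on a half-line for all time, and then invoking the comparison principle Lemma \ref{cl} together with the lower bound on the initial data coming from condition {\bf (IC)}. First I would use {\bf (IC)} to fix $\sigma>0$ and $z_0\in\R$ with $u_0(s,z)\geq\sigma$ for all $s\in[-h,0]$ and $z\geq z_0$, and, since $g$ satisfies {\bf (M)} with $g'(0)>1$, choose a small $\mu\in(0,\kappa)$ such that $g(u)\geq \theta u$ on $[0,\mu]$ for some $\theta\in(1,g'(0)]$; because $u_\infty<\infty$, the effective birth term $d_3(t,z)=g(u(t-h,z-ch))/u(t-h,z-ch)$ is bounded, which lets me work with the linear equation (\ref{ale}) type comparison from Proposition \ref{prop} when needed.

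The core step is to build a sub-solution of the form $u_-(t,z)=\epsilon\,\omega(z)$ (time-independent), or more robustly $u_-(t,z)=\epsilon\min\{1,\omega(z)\}$ with $\omega$ a slowly varying positive function vanishing at $-\infty$, supported so that the nonlinear operator $\mathcal N$ from Definition \ref{def1} satisfies $\mathcal N u_-\leq 0$ for $z\neq z_*$ and the corner inequality is the correct one for a sub-solution. The crucial point is that as long as $u_-$ stays in the range $[0,\mu]$ where $g(u)\geq \theta u$, the inequality $\mathcal N u_-(t,z)\leq \epsilon[-\omega''+c\omega'+\omega] - \theta\epsilon (K\ast\omega(\cdot-ch))(z)$, and one picks $\omega$ decaying like $e^{\lambda_2(c)z}$ or, at criticality, $e^{\lambda_1(c)z}$ truncated at level $1$ far to the right; the existence of such a profile making the bracket nonpositive is exactly what Definition \ref{def} of $\mathcal C$ and the sign of the zeros of $E_c$ guarantee (this is the same computation underlying Theorem \ref{Sttg}, read with the roles of $\phi_c$ and the perturbation exchanged, using $\theta$ in place of $g'(0)$). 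Shifting $\omega$ far enough to the right and shrinking $\epsilon$, I can arrange both $u_-(s,z)\leq u_0(s,z)$ for all $s\in[-h,0]$, $z\in\R$ (using the {\bf (IC)} lower bound $\sigma$ on $z\geq z_0$ and the fact that $u_0\geq 0$ with $u_-$ tiny elsewhere) and $u_-\leq\mu$ everywhere.

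Then Lemma \ref{cl} (applicable since $g$ is extended $C^1$ and monotone-modified as in the monotone-case setup, or directly by the Phragm\'en--Lindel\"of argument used there, with the exponential bound on $u$ provided by $u_\infty<\infty$) yields $u(t,z)\geq u_-(t,z)=\epsilon\min\{1,\omega(z)\}$ for all $t\geq-h$, $z\in\R$. In particular there is $z_0'\in\R$ (any point where $\omega(z_0')\geq 1$, i.e. far enough right of the shift) such that $u(t,z)\geq\epsilon=:\sigma'$ for all $t\geq -h$ and $z\geq z_0'$, which is the assertion. The case $c\leq c_*^-$ is handled identically after reflecting $z\mapsto -z$, and when both $\pm c\geq\pm c_*^\pm$ hold the two half-line bounds combine.

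The main obstacle I expect is purely technical: verifying that the truncated profile $\omega$ genuinely gives a sub-solution in the sense of Definition \ref{def1}, i.e. checking the corner condition $(u_-)_z(t,z_*-)<(u_-)_z(t,z_*+)$ at the truncation point $z_*$ where $\omega=1$ — for a profile that is increasing and then flat, the left derivative is positive and the right derivative is zero, which has the wrong sign, so one must instead truncate from above more carefully (e.g. take $u_-=\epsilon\min\{1,\omega(z)\}$ only after confirming $\mathcal N$ respects the min, or use a genuinely $C^1$ modification), and simultaneously keep $\mathcal N u_-\leq 0$ on both pieces; handling the non-compact support of $K$ here, exactly as in the estimate of $I_2^\pm$ in (\ref{I2}) via (\ref{int}), is where the real care is needed. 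A secondary point is confirming that $d_3$ being bounded (from $u_\infty<\infty$ and $g$ Lipschitz with $g(0)=0$) legitimately places us in the comparison framework even where $u_-$ is not itself a solution.
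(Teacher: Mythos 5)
Your strategy (a positive subsolution pinned below $u_0$ via {\bf (IC)}, then comparison) is in the right spirit, but as written it has three gaps, the first of which is fatal to the construction you propose. Condition {\bf (IC)} only gives $u_0\geq\sigma$ on $[z_0,+\infty)$ and $u_0\geq 0$ elsewhere; the initial datum may vanish identically on $(-\infty,z_0)$. A strictly positive profile $u_-=\epsilon\min\{1,\omega(z)\}$ with $\omega>0$ everywhere can therefore never be ordered below $u_0$ on all of $\R$ at $t\in[-h,0]$, no matter how far you shift it or how small you take $\epsilon$ — ``tiny'' is not $\leq 0$. This is precisely why the paper's lower barrier is $\phi_c^{-}(z)-q_*\eta_{\lambda_1(c)}(z-b)$ with $q_*=\kappa_--\sigma$: by the known asymptotics of the wavefront at $-\infty$ this expression is \emph{negative} on $(-\infty,z_0]$ (and $g$ is extended to $(-\infty,0]$ at the start of Section 5.1 to make sense of this), while {\bf (IC)} handles $[z_0,+\infty)$. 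Second, as you yourself observe, the corner of $\min\{1,\omega\}$ has the supersolution sign in Definition \ref{def1}; your proposed repairs (smoothing, ``truncating from above more carefully'') are not carried out, and in the smoothed/flat region the inequality $\mathcal{N}u_-\leq 0$ requires the convolution $K\ast g(u_-)$ to recover at least $\epsilon$ near the truncation point, where the non-compact tail of $K$ sees only the exponentially small part of $u_-$; with only the margin $g(u)\geq\theta u$ and no $\gamma>0$ or contractivity near $\kappa$ to absorb the deficit (the analogue of $I_2^{\pm}$ in (\ref{I2})), this does not close. Third, Lemma \ref{cl} requires $g$ non-decreasing, whereas in Theorem \ref{aes} $g$ need not be monotone, so you cannot compare $u$ directly with a subsolution of the original equation.

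The paper resolves all three points with one device you do not use: it replaces $(K,g)$ by a minorant pair $(K^-,g^-)$ with $K^-$ compactly supported and $g^-$ monotone, satisfying $K^-g^-\leq Kg$ on $[0,u_\infty]$ (this is where the hypothesis $u_\infty<\infty$ enters). Lemma \ref{lem} then gives $u\geq u^-$, where $u^-$ solves the \emph{monotone} minorant equation; the lower barrier for $u^-$ is not an ad hoc subsolution but an exact monotone wavefront $\phi_c^-$ of the minorant equation minus $q_*\eta_{\lambda_1(c)}(\cdot-b)$, which has the correct (convex) corner sign and, because $K^-$ has compact support, falls under the $\gamma=0$ case of Theorem \ref{Sttg} where the troublesome tail integral vanishes. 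If you want to salvage your route, you would essentially have to rebuild these three ingredients, at which point you have reproduced the paper's proof.
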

\begin{proof}
Fix $N>0$ and define $K^-(z):=\tau\chi_{[-N,N]} K(z)$ for some $\tau\geq 1$ such that $|K^-|_{L^1}=1$. Next, define the monotone function $g^-:\R_{\geq 0}\to\R_{\geq 0}$ by $g^-(u)= \tau^{-1}\min_{x\in[u,u_{\infty}]}g(x)$ and $g^-(u)=g(u_{\infty})$ for $u\geq u_{\infty}$. Clearly, $g^-$ holds {\bf (M)} with positive equilibrium $\kappa_-=\min_{x\in[\kappa,u_{\infty}]}g(x)$ and $L_{g^-}=\tau^{-1}g'(0)$. Also, $g^-(u)\leq g(u)$ for all $u\in[0, u_{\infty}]$. 

%We note that by strong maximum principle $u(t,z)>0$ for all $(t,z)\in[-h,0]\times\R$ so denoting by $u^-_0(s,z):=u_0(s+h,z)$ we have $u^-_0(s,z)>0$ for all $(s,z)\in[-h,0]\times\R$. 

Next, by denoting $q_*:=\kappa_--\sigma>0$ without loss of generality, due to the asymptotic behavior of wavefronts in $-\infty$ (see \cite[Theorem 3 and Theorem 7]{AG})  there are a wavefront (monotone by Remark \ref{Mono}) $\phi^-_c$ to (\ref{nle}) (where $K$ and $g$ are replaced by $K^-$ and $g^-$, respectively)  and $z_{\phi^-_c}\in\R$    such that 

\begin{eqnarray*}\label{asymp}
\phi^-_c(z)-q_*\eta_{\lambda_1(c)}(z-b)\leq 0\leq u_0(s,\pm z)\quad \hbox{for all} \quad (s,z)\in[-h,0]\times(-\infty, z_0].
\end{eqnarray*}

\noindent where  $\pm b\geq \pm b_{0}^{\pm}$. By {\bf (IC)} we also have  

\begin{eqnarray*}\label{z0}
\phi_c^-(z)-q_*\eta_{\lambda_1(c)}(z-b)\leq u_0(s,\pm z)\quad \hbox{for all} \ (s,z)\in [-h,0]\times [z_0,+\infty)
\end{eqnarray*}
Thus,  for all $\pm c\geq \pm c^{\pm}_*$ we get 
\begin{eqnarray}\label{all1}
\phi_c^-(z)-q_*\eta_{\lambda_1(c)}(z-b)\leq u_0(s,\pm z)\quad \hbox{for all} \ (s,z)\in [-h,0]\times \R
\end{eqnarray}

% we denote by $q_*=\kappa_--\sigma_0$ then Remark (\ref{mk}) implies
%\begin{eqnarray}
%\phi_c^-(z)-q_*\eta_{\lambda_c}(z-b)\leq u_0^-(s,z)\quad\hbox{for all}\quad (s,z)\in[-h,0]\times\R.
%\end{eqnarray}

Now, denote by $u^-(t,z)$ the solution to (\ref{nle}), with $g=g^-$, $K=K^-$ and $u^-(s,z)=u_0(s,z)$ for $(s,z)\in[-h,0]\times\R$. Then, because of Theorem \ref{Sttg}  for $\pm b\geq\pm b^{\pm}_0$ we have 
 \begin{eqnarray} \label{inq1}
 \phi^-_c( z)-q_*\eta(z-b)\leq u^-(t,z)\quad\forall (t,z)\in[-h,\infty)\times\R.
 \end{eqnarray}
  Thus,  there are $\sigma'>0$ and $ z_0' \in\R$ such that
 \begin{eqnarray}\label{sep}
  u^-(t,\pm z)\geq\sigma'\quad\forall (t,z)\in[-h,\infty)\times[z_0',\infty), \quad  \pm c\geq\pm c_*^{\pm}.
    \end{eqnarray}
    
  However, Lemma \ref{lem} (with $R=u_{\infty}$)  implies
    \begin{eqnarray}\label{inq2}
  u(t,\pm z)\geq u^-(t,\pm z)\quad\forall (t,z)\in[-h,\infty)\times\R, \quad  \pm c\geq\pm c_*^{\pm}.
    \end{eqnarray}
   
   Thus, the Lemma follows by (\ref{inq1}) and (\ref{inq2}).

   % In particular, 
     %\begin{eqnarray}\label{inQ}
  %\underline{u}_0(s,\pm z)\geq\sigma'\quad\forall (s,z)\in[-h,0]\times[z_c,\infty), \quad  \pm c>\pm c_*^{\pm}.
    %\end{eqnarray} %Otherwise, we set as $\underline{u}(t,z)$ the solution to (\ref{nle}) with $g(u)=\underline{g}(u)$ and initial datum $\underline{u}_0(s,z)=u(s+h+t_0,z)$    
    
%   Due to (\ref{inq2}), (\ref{inq1}) and (\ref{inQ}) the same arguments applied to $\underline{u}_0$ yields to
    %\begin{eqnarray}\label{inQ2}
  %\underline{u}(t,\pm z)\geq\sigma_1\ \forall (t,z)\in[-h,\infty)\times[z'_c,\infty), \quad  \pm c\geq\pm c_*^{\pm},
    %\end{eqnarray}   
  %for some $0<\sigma_1\leq \sigma'$ and $z'_c\geq z_c$.
  
%Finally, if $c=c^{\pm}_*$ we can take a kernel $\bar{K}\leq K$ with compact support and by Lemma \ref{lem} the solution $\bar{u}(t,z)$ to (\ref{nle}), with $K$ instead of $\bar{K}$ and initial datum $u_0(s,z)$, holds $\bar{u}(t,z)\leq u(t,z)$ for all $(t,z)\in[-h,+\infty)\times\R$. Therefore (\ref{all1}) and  Theorem \ref{Sttg} imply (\ref{inq1}) with $\bar{u}$ instead of $u$ and the result follows by using the same arguments that the case $c\neq c^{\pm}_*$.
\end{proof}
Now, we prove a key result in order to obtain our global stability result

 \begin{lem}\label{per}
Under the conditions of Theorem \ref{aes} for each $\epsilon\in(0,m_g)$ there exist $T_{\epsilon}=T_{\epsilon}(u_0)>0$ such that $\pm c\geq \pm c_*^{\pm}$ implies
 \begin{eqnarray}\label{inq0}
 m_g-\epsilon\leq u(t,\pm z)\leq M_g+\epsilon\quad\hbox{for all} \ (t,z)\in[T_{\epsilon},\infty)^2.
 \end{eqnarray}
% Similarly, for $c\leq c_*^-$ we have
 %\begin{eqnarray}\label{Inq0}
% m_g-\epsilon\leq u(t,z)\leq M_g+\epsilon\quad\hbox{for all} \ (t,z)\in(-\infty, T_{\epsilon})^2
 %\end{eqnarray}
  \end{lem}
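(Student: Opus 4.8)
The plan is to prove an attractivity statement: on the invaded half--line the $\omega$--limit of $u$ is squeezed into $I_g=[m_g,M_g]$, by trapping $u$ between two solutions of \emph{monotone} auxiliary equations whose wavefronts tend to $M_g$ and to $m_g$.

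\emph{Step 1 (a priori bounds).} Since $g$ satisfies {\bf (M)} it is bounded; with $\zeta_2:=\sup_{s\ge0}g(s)$ the constant $A:=\max\{\zeta_2,\sup u_0\}$ is a super--solution of (\ref{nle}), so comparing $u$ with the associated linear equation on $[0,h]\times\R$ through the Phragm\`en--Lindel\"of principle \cite[Chapter~3, Theorem~10]{PW} and iterating over $[h,2h],[2h,3h],\dots$ gives $u\le A$, i.e. $u_\infty:=\sup_{[-h,\infty)\times\R}u<\infty$. Then Lemma \ref{sepa0} supplies $\sigma'>0$ and $z_0'\in\R$ with $u(t,\pm z)\ge\sigma'$ for all $t\ge-h$, $z\ge z_0'$; note $\sigma'\le m_g\le\kappa\le M_g\le\zeta_2\le u_\infty$, so on the invaded region $u$ is confined to the fixed interval $[\sigma',u_\infty]\supseteq I_g$.

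\emph{Step 2 (squeeze from above, then from below).} I treat $c\ge c_*^+$, the case $c\le c_*^-$ being symmetric. Put $g^+(u):=\max_{0\le x\le u}g(x)$: this is non--decreasing, bounded, agrees with $g$ near $0$ (so $(g^+)'(0)=g'(0)>1$, $|g^+|_{Lip}=g'(0)$, and $g^+$ has the same characteristic function, hence the same $c_*^\pm$ and $\lambda_j$, as $g$), satisfies $g^+\ge g$ on $\R_{\ge0}$, and — because $g(u)>u$ on $(0,\kappa)$, $g(u)<u$ on $(\kappa,\infty)$ and $g\le M_g$ on $[0,M_g]$, all consequences of {\bf (M)} — has on $[0,\infty)$ exactly the two fixed points $0$ and $M_g$; thus $g^+$ obeys {\bf (M)} with positive equilibrium $M_g$. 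By Lemma \ref{lem} ($K_1=K_2=K$, $g_1=g$, $g_2=g^+$, $R=u_\infty$, $v_1^0=v_2^0=u_0$) one gets $u\le u^+$ on $\R_+\times\R$, where $u^+$ solves the monotone $(K,g^+)$--equation; Theorem \ref{Sttg} then traps $u^+$ between $\phi^+(\cdot-b)$ and $\phi^+(\cdot-b)+q e^{-\gamma t}\eta_\lambda(\cdot-b)$ with $\phi^+$ the $g^+$--wavefront, $\phi^+\le M_g$, so $u(t,z)\le u^+(t,z)\le M_g+\epsilon'$ for $t\ge T_1$, all $z$, for any prescribed $\epsilon'>0$. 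Since now $u$ is confined to $[\sigma',M_g+\epsilon']$ on the invaded region for $t\ge T_1$, I build a monotone minorant exactly as in Lemma \ref{sepa0} but with the tighter cap $M_g+\epsilon'$: a compactly supported renormalized truncation $K^-=\tau\chi_{[-N,N]}K$ and $g^-(u):=\tau^{-1}\min_{u\le x\le M_g+\epsilon'}g(x)$, so that $K^-g^-\le Kg$ on $\R\times(-\infty,M_g+\epsilon']$ and $g^-$ obeys {\bf (M)} for $N$ large; the key point is that $\rho=|g_{|_{I_g}}|_{Lip}<1$ forces $g(I_g)\subseteq I_g$ and $\min_{I_g}g=m_g$, so the positive equilibrium $\kappa^-=\kappa^-(\tau,\epsilon')$ of $g^-$ tends to $m_g$ as $\tau\to1$, $\epsilon'\to0$, hence can be made $>m_g-\epsilon$. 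Restarting at $t=T_1$ and comparing via Lemma \ref{lem} gives $u\ge u^-$, and Theorem \ref{Sttg} (now with $\gamma=0$ permitted, $K^-$ being compactly supported) traps $u^-$ below a translate of $\phi^-$ plus $q e^{-\gamma t}\eta_\lambda$, with $\phi^-(+\infty)=\kappa^-$, whence $u^-(t,z)\ge m_g-\epsilon$ for $z\ge Z_2$, $t\ge T_2$. Taking $T_\epsilon$ to be the largest of the thresholds produced above (and of those for $c\le c_*^-$) yields $m_g-\epsilon\le u(t,\pm z)\le M_g+\epsilon$ on $[T_\epsilon,\infty)^2$.

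\emph{Main obstacle.} The technical heart is the simultaneous construction of the monotone comparison problems: one must ensure that $g^{\pm}$ satisfy {\bf (M)} with positive equilibria exactly $M_g$, respectively (up to an arbitrarily small error) $m_g$; that the pointwise dominations $K^-g^-\le Kg\le Kg^+$ hold on the range of values actually attained by $u$ at the time the comparison is started; and that the {\bf (IC)}--type data of $u(T_1+\cdot,\cdot)$ can be fitted below $\phi^+(\cdot-b)+q\eta_\lambda(\cdot-b)$ and above $\phi^-(\cdot-b)-q\eta_\lambda(\cdot-b)$ for a suitable translate $b$ — this is where the numbers $b^\pm_\gamma$ of Theorem \ref{Sttg} and the freedom to translate the wavefront enter, and where the truncation of $K$ (whose tail is otherwise unbounded) is needed so that the critical speed $c_*^\pm$ is covered. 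The only spot at which the hypothesis $\rho<1$ is indispensable is in the identities $\max_{I_g}g=M_g$, $\min_{I_g}g=m_g$ and hence $g(I_g)\subseteq I_g$; all the other ingredients — global boundedness, Lemma \ref{sepa0}, Lemma \ref{lem}, Theorem \ref{Sttg}, the Phragm\`en--Lindel\"of principle — are already available.
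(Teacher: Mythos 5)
Your overall frame (monotone envelopes of $g$, comparison via Lemma \ref{lem}, truncation of $K$) is the right one and matches the paper's, but there is a genuine gap in the lower bound, which is the heart of the lemma. You propose to get $u\geq u^-\geq m_g-\epsilon$ directly from Theorem \ref{Sttg} applied to the truncated, monotone minorant equation. But for the truncated kernel at the critical speed (and this lemma must cover $c=c_*^{\pm}$, since Theorem \ref{aes}(ii) is precisely about critical fronts) Theorem \ref{Sttg} only applies with $\gamma=0$, so the sub-solution it produces is $\phi^-(z)-q\,\eta_{\lambda_1}(z-b)$ with a \emph{time-independent} defect $q$. The number $q$ cannot be sent to zero: it must be large enough that $\phi^- - q\eta(\cdot-b)$ sits below the initial datum, which is only guaranteed through the condition {\bf (IC)} with the fixed constant $\sigma$. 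Hence what you actually obtain is a uniform separation from zero, $u^-\geq\sigma'>0$ on the invaded half-line --- i.e.\ exactly Lemma \ref{sepa0}, which the paper proves this way --- and not the attractivity bound $u\geq m_g-\epsilon$. The paper's proof contains an additional bootstrapping step that your proposal is missing: starting from the separation level $\sigma'$, it compares $\underline{u}$ on a half-line $[z'+N,\infty)$ with the spatially homogeneous solution $\beta(t)$ of the damped delay ODE $\beta'=-\beta+\alpha\underline{g}(\beta(t-h))$, $\beta\equiv\sigma'$ on $[-h,0]$, where the damping $\alpha<1$ is chosen so that $\alpha\leq\int_{-\infty}^{N-ch}K$ absorbs the kernel tail in the Phragm\`en--Lindel\"of argument; monotone convergence of $\beta$ to the equilibrium $m_{\epsilon}-\epsilon/4$ (via \cite[Corollary 2.2]{HS}) is what lifts the lower bound from $\sigma'$ up to $m_g-\epsilon$. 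Without this (or an equivalent iteration), the lower half of (\ref{inq0}) is not proved.

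There is a second, smaller gap in your upper bound. You apply Theorem \ref{Sttg} to the majorant pair $(K,g^+)$, but at $c=c_*^+$ with a non-compactly-supported $K$ that theorem is unavailable, and unlike the minorant case you cannot truncate $K$ here, since truncation decreases $Kg^+$ and destroys the domination $Kg\leq K g^{+}$ needed in Lemma \ref{lem}. You would also have to fit $u_0$ pointwise below $\phi^+(\cdot-b)+q\eta_{\lambda}(\cdot-b)$ at $-\infty$, which the $L^1_{h,\lambda}$ hypothesis does not immediately give. The paper avoids both issues by comparing $u$ with the \emph{spatially homogeneous} solution $\bar{u}(t)$ of the $\bar{g}$-equation started from the constant $|u_0|_{L^{\infty}}$ (Lemma \ref{lem} with $R=+\infty$); since $M_g$ is the global attractor of the scalar delay dynamics of $\bar g$, one gets $u(t,\cdot)\leq M_g+\epsilon/2g'(0)$ for $t\geq t_\epsilon$ uniformly in $z$ and for every admissible speed, with no wavefront, no translation $b$, and no restriction to non-critical $c$. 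I recommend replacing your wavefront-based upper estimate by this homogeneous comparison and inserting the $\beta(t)$ bootstrapping step for the lower estimate.
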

 
 \begin{proof}
  We define $m_{\epsilon}:=\min_{x\in[\kappa, M_g+\epsilon/2g'(0)]}g(x)$,

  $$\bar{g}(u):=\max_{x\in[0,u]}g(x)\quad \hbox{and}\quad \underline{g}(u):=\min_{x\in[u,M_g+\epsilon/2g'(0)]}g(x).$$
  
  It is clear that these monotone functions (we define $\underline{g}(u)=g(M_g+\epsilon/2g'(0))$ for all $u\geq M_g+\epsilon/2g'(0)$) satisfy {\bf(M)} with positive equilibrium $ M_g$ and $m_{\epsilon}$, respectively. Also, $L_{\bar{g}}= L_{\underline{g}}=g'(0)$ and 
  \begin{eqnarray}\label{monog}
  g(u)\leq\bar{g}(u) \ \hbox{for all} \ u\geq 0\quad \hbox{and} \quad \underline{g}(u)\leq g(u) \ \hbox{for all} \ u\in[0, M_g+\epsilon/2g'(0)]. 
  \end{eqnarray}
   %Similarly, for small $\epsilon$(without restriction to (\ref{inq0})),  there is also an increasing $\underline{g}$ function meeting {\bf(H)} with positive equilibrium $\kappa_-\in(m_g-\epsilon,m_g)$ and $L_{\underline{g}}=g'(0)$ such that $\underline{g}_{\epsilon}(u)\leq g(u)$ for $u\in[0,M_g+\epsilon]$ and $\underline{g}_{\epsilon}(u)$ is extended in monotone form to $\R_+$.

%For $c\in\mathcal{C}$ by  (\ref{inequ2})
 %$$
 %u_0(s,z)\leq q \quad (s,z)\in[-h,0]\times\R
 %$$
 
 By denoting $\bar{u}(t)$ as the homogeneous solution to (\ref{nle}) with $g=\bar{g}$ and initial datum $\bar{u}(s)=|u_0|_{L^{\infty}_{h,\lambda}}$ for $s\in[-h,0]$, because of Lemma \ref{lem}(with $R=+\infty$)   we have
 \begin{eqnarray}\label{supes}
 u(t,z)\leq \bar{u}(t)\quad \forall (t,z)\in[-h,\infty)\times\R.
 \end{eqnarray} 
Thus, because of $M_g$ is the global attractor to $\bar{g}$ there is $t_{\epsilon}>0$ such that
\begin{eqnarray}\label{inq7}
u(t,z)\leq M_g+\epsilon/2g'(0) \quad \forall (t,z)\in[t_{\epsilon},\infty)\times\R,
\end{eqnarray}

Now, we procede to obtain the lower estimation. Denoting by $\underline{u}(t,z)$ the solution to (\ref{nle}) with $g=\underline{g}$ and initial datum $\underline{u}_0(s,z)=u(s+h+t_{\epsilon},z)$ 
for $(s,z)\in[-h,0]\times\R,$  then by (\ref{inq7}) and Lemma \ref{lem} (with $R=M_g+\epsilon/2g'(0)$)
  \begin{eqnarray}\label{sep0}
 u(t+t_{\epsilon}+h,z)\geq\underline{u}(t,z)\quad\forall (t,z)\in[0,\infty)\times\R.
     \end{eqnarray}

Next,  by (\ref{inq7}) we have $$u_{\infty}:=\sup_{(t,z)\in[-h,\infty)\times\R}u(t,z)<\infty,$$
so that Lemma \ref{sepa0} implies  there exit $\sigma'>0$ and $z_0'\in\R$ such that 
 \begin{eqnarray}\label{ult}
  \underline{u}(t,\pm z)\geq\sigma'\quad\hbox{for all} \ (t,z)\in[-h,+\infty)\times[z_0',\infty)\ \hbox{and} \ \pm c\geq \pm c^{\pm}_*.
    \end{eqnarray}

% Without restriction to $\sigma$, we denote by $q_*=\kappa_--\sigma>0$. Then, if in Proposition \ref{pro} we take  $g=g^-$ for $\pm c\geq\pm c^{\pm}_*$ and for each wavefront $\psi^-_c$ to (\ref{nle}), using (\ref{mk}), we obtain
%\begin{eqnarray}\label{infty}
%\psi^-_c(\pm z)-q_*\leq \sigma\leq u_0(s,\pm z)\quad\forall (s,z)\in[-h,0]\times[z_0,\infty) 
 %\end{eqnarray}
 %(for $q_0\geq\kappa_-$ it is enough to take $q_*=\kappa_-/2$ to obtain (\ref{infty})).

%Now, because of the asymptotic behavior of wavefronts in $-\infty$ (see \cite[Theorem 3 and Theorem 7]{AG}) and (\ref{inequ2}) there are a wavefront $\psi^-_c$ and $z_1=z_1(\psi^-_c)$   such that $r\leq 0$ implies
 %\begin{eqnarray}\label{inq8}
 %\psi^-_c(\pm( z+r))\leq u_0(s,\pm z)\quad\forall (s, z)\in[-h,0]\times(-\infty,z_1]. %  r\in(-\infty, r_0]
 %\end{eqnarray}  

%Therefore, by (\ref{infty}) and (\ref{inq8}) for each $\pm b\geq\pm b^{\pm}$ there is $r_0$ sufficiently negative such that the wavefront $\phi^-_c(\cdot)= \psi^-_c(\cdot\pm r_0)$ holds
 %\begin{eqnarray}\label{infes}
 %\phi^-_c(z)-q_*\eta(z- b)\leq u_0(s,z)\quad\forall (s,z)\in[-h,0]\times\R
 %\end{eqnarray} 

%Similarly, (\ref{infes}) holds for $c\leq c_*^-$.

% Denoting by $u^-(t,z)$ the solution to (\ref{nle}) with $g=g^-$ and $u^-(s,z)=u_0(s,z)$ for $(s,z)\in[-h,0]\times\R$. Because of Lemma \ref{Sttg} and (\ref{infes}) for each $c\in\mathcal{C}$  and $\pm b\geq\pm b^{\pm}$ we have 
% \begin{eqnarray} \label{inq1}
 %\phi^-_c( z)-q_*e^{-\gamma t}\eta(z-b)\leq u^-(t,z)\quad\forall (t,z)\in[-h,\infty)\times\R
 %\end{eqnarray}

Then, we define 
  $$ 
  0<\alpha:=\frac{m_{\epsilon}-\epsilon/4}{\underline{g}(m_{\epsilon}-\epsilon/4)}<1,
  $$
 for $\epsilon$ enough small such that $g_{\alpha}:=\alpha\underline{g}$ satisfies {\bf(M)} with positive equilibrium $m_{\epsilon}-\epsilon/4$. Next, we set $\beta(t)$ as the solution to the problem
 \begin{eqnarray*}
 \beta'(t)&=&-\beta(t)+g_{\alpha}(\beta(t-h))\quad t>0\\
 \beta(s)&=& \sigma'\quad\quad s\in[-h,0],
 \end{eqnarray*}
 Without restriction: $\sigma'<m_{\epsilon}-\epsilon/4.$ So by \cite[Corollary 2.2, p. 82]{HS} $\beta(t)$ converges monotonetly to $m_{\epsilon}-\epsilon/4$.

  Then, by Theorem \ref{st} and Proposition \ref{pro}, for each $N>0$ there are $t_N>t_{\epsilon}$  and $z'\geq z'_0$ such that 
 \begin{eqnarray}\label{inq4}
 \underline{u}(t+t_N,\pm z)\geq m_{\epsilon}-\frac{\epsilon}{4}>\beta(t)\quad\forall (t,z)\in[0,\infty)\times[z',z'+N],\  \pm c\geq\pm c_*^{\pm}.
  \end{eqnarray} 
 
\noindent Now we consider $c\geq c^+_*$ and we fix $N$ large enough, such that
 \begin{eqnarray}\label{inq3}
 \alpha\leq \int_{-\infty}^{N-ch}K(y)dy.
 \end{eqnarray} 
  We define $\delta(t,z):=\beta(t)-\underline{u}(t+t_N+h,z)$. So, by (\ref{ult}) we obtain 
 \begin{eqnarray}\label{initi}
 \delta(s,z)\leq 0\quad\forall (s,z)\in[-h,0)\times[z',\infty),
  \end{eqnarray}
  and for $(t,z)\in[0,h]\times[z'+N,\infty)$ because of (\ref{initi}) and (\ref{inq3}) we have
 \begin{eqnarray*}
\mathcal{L}\delta(t,z)=\int_{\R}K(y)\underline{g}(\underline{u}(t+t_N,z-ch-y))dy-\alpha \underline{g}(\beta(t-h))\geq\\
\int_{-\infty}^{N-ch}K(y)[\underline{g}(\underline{u}(t+t_N,z-ch-y))-\underline{g}(\beta(t-h))]dy\geq 0.
 \end{eqnarray*}
 So by (\ref{inq4}) and Phragm\`en-Lindel\"of principle we conclude that 
 \begin{eqnarray*}
 \delta(t,z)\leq 0\quad \forall(t,z)\in[0,h]\times[z'+N,\infty),
  \end{eqnarray*}
 and by (\ref{inq4})
 \begin{eqnarray}\label{inq5}
 \delta(t,z)\leq 0\quad \forall(t,z)\in[0,h]\times[z',\infty).
  \end{eqnarray}
 Therefore using again (\ref{inq4}) and (\ref{inq5}) instead of (\ref{initi}) we can repeat the process for the intervals $[h,2h],[2h,3h]...$ to obtain
 \begin{eqnarray}\label{inq6}
 \beta(t)\leq \underline{u}(t+t_N+h,z)\quad \forall(t,z)\in[0,\infty)\times[z',\infty).
  \end{eqnarray}
 Finally, by (\ref{sep0}) and (\ref{inq6}) there exist $T_{\epsilon}(u_0)\geq t_N+t_{\epsilon}$
 $$
 m_g-g'(0)[\epsilon/2g'(0)]-\epsilon/2\leq m_{\epsilon}-\frac{\epsilon}{2}\leq u(t,z)\quad (t,z)\in[T_{\epsilon}(u_0),\infty)^2.
 $$
 
Otherwise, for  $c\leq c_*^-$ if we use the inequality (\ref{inq2}) and the same function $\beta(t)$ then the situation is completely analogous and therefore (\ref{inq0}) can be obtained. 
\end{proof}

  \subsection{Proof of Theorem \ref{aes}} 
  We will give the proof to the case $c\geq c_*^+$ since the proof for the case  $c\leq c_*^-$ is completely analogous.

  \begin{itemize}
%  \item[(i)] We take $\epsilon$ such that $L_g([m_g-\epsilon, M_g-\epsilon])<1$. Then because of Theorem \ref{per} there exists $T_{\epsilon}=T_{\epsilon}(v_0)$ such that (\ref{inv0}) holds 
  %for $v(t+h+T_{\epsilon},z)$ and $\phi_c(z)$ for some $b\geq T_{\epsilon}$ and $I=[m_g-\epsilon,M_g+\epsilon]$. Thus by (\ref{Inqu3}) the upper estimation in (\ref{Inqu2}) es true for $(t,z)\in[T_{\epsilon},\infty)\times\R$. Finally, due to
  %(\ref{supes}) there exist $C=C=(b,v_0)$ such that (\ref{Inqu2}) is true for $(t,z)\in[-h,T_{\epsilon}]\times\R$.   

  \item[(i)] We  take small $\epsilon_0>0$ such that  $\rho_{\epsilon_0}:=L_g([m_g-\epsilon_0,M_g+\epsilon_0])<1$ and   $\rho_{\epsilon_0}e^{\gamma_*h}< 1-\gamma_*$. % for $\lambda_0\in(\lambda_c-\epsilon_0,\lambda_c+\epsilon_0)\subset (a,b)$ we have $E_c(\lambda_0)<0$ and, 
  
%  by using (\ref{Inequ21}), 
  
 %  \begin{eqnarray}\label{A}
 %e^{-\lambda_0z}|u_0(s,z)-\phi_c(z)|\in C([-h,0]; L^1(\R)\cap L^{\infty}(\R)).
 %\end{eqnarray} 

 Note that by (\ref{rc1}) we get
\begin{eqnarray}\label{A1}
 e^{-\lambda z}|u(t,z)-\phi_c(z)|\leq \frac{|r_0|_{L^1_{h,\lambda}}}{A_h\sqrt{t}} e^{-\gamma_{\lambda} t}  \quad \ \forall \ t>h, z\in\R
 \end{eqnarray}

  Now, we consider a function $r:[-h,+\infty)\to\R_+$ given by $r(t):=q_0e^{-\gamma_*t}$ where $q_0\geq m_g+M_g$ will be fixed below. Then,  
for  $T_0:=\max\{T_{\epsilon_0}(u_0),T_{\epsilon_0}(\phi_c),h\}$ (according to Lemma \ref{per}) we define $\delta_{\pm}(t,z):=\pm[u(t+T_0+h,z)-\phi_c(z)]-r(t)$. So, by (\ref{inq7}) we obtain 
  $$\delta_{\pm}(s,z)\leq 0\ \hbox{for} \ (s,z)\in[-h,0]\times\R.$$   
  
 And, if $(t,z)\in[0,h]\times\R$  then by (\ref{A1}) and  Lemma \ref{per} we get  
 
  \begin{eqnarray}\label{L}
  \mathcal{L}\delta_{\pm}(t,z)=\pm\int_{\R}K(z-ch-y)[g(\phi_c(y))-g(u(t+T_0,y))]-\mathcal{L}r(t) 
  \end{eqnarray}
  \small\begin{eqnarray*}
&\geq&-[\frac{g'(0)|r_0|_{L^1_{h,\lambda}} e^{-\gamma_{\lambda}(t+T_0)}}{A_h\sqrt{t+T_0}}\int_{-\infty}^{T_0}e^{\lambda y}K(z-ch-y)dy+\rho_{\epsilon_0}\int_{T_0}^{+\infty}K(z-ch-y)r(t-h)dy]-\mathcal{L}r(t)\\
&\geq &- q_0e^{-\gamma_*t}[\frac{g'(0)|r_0|_{L^1_{h,\lambda}} e^{-\gamma_{\lambda} T_0}}{q_0A_h\sqrt{t+T_0}}\int_{-\infty}^{T_0}e^{\lambda y}K(z-ch-y)dy+ e^{\gamma_*h}\rho_{\epsilon_0}\int_{T_0}^{+\infty}K(z-ch-y)dy-1+\gamma_*]
\end{eqnarray*}\normalsize
Now, in the last inequality  since $\rho_{\epsilon_0}e^{\gamma_*h}< 1-\gamma_*$ we can choose $q_0$ large enough such that $\mathcal{L}\delta_{\pm}(t,z)\geq 0$ for all $(t,z)\in[0,h]\times\R$,  so that Phragm\`en-Lindel\"of principle implies  $\delta_{\pm}(t,z)\leq 0$ for $(t,z)\in[0,h]\times\R.$

Analogously, by using (\ref{A1})  and Lemma \ref{per} it is possible to repeat the process for the intervals $[h,2h],[2h,3h]...$ in order to obtain $\delta(t,z)\leq 0$ for all $(t,z)\in[-h,\infty)\times\R$.
   
   Finally,  as the $w(t,z)=u(t,z)-\phi_c(z)$ satisfies 
 $$  
 w_t(t,z)=w_{zz}(t,z)-cw_z(t,z)-w(t,z)+\int_{\R}K(z-ch-y)d_3(t,y)w(t-h,y)dy  
 $$  
where $d_3(t,y)=[g(u(t-h,y))-g(\phi_c(y))]/[u(t-h,y)-\phi_c(y)]$, by Proposition \ref{prop}, with $\lambda'=0$, we obtain  
  $$
  \sup_{(t,z)\in[-h,T_0]\times\R}|u(t,z)-\phi_c(z)|\leq qD^{[T_0/h]+1} 
  $$  
     
   By taking $C=\max\{q_0, qD^{[T_0/h]+1} e^{\gamma_* T_0} \}$ the result is followed.
   
    \item[(ii)] We take $\epsilon_0>0$ such that  $\rho_{\epsilon_0}:=L_g([m_g-\epsilon_0,M_g+\epsilon_0])<1$ and choose $d>h$ satisfying 
         \begin{eqnarray}\label{inq}
 \frac{\rho_{\epsilon_0}\sqrt{t+d}}{\sqrt{t+d-h}}+\frac{1}{2(t+d)}<1\quad t\geq-h.
  \end{eqnarray}    
    
 Now, we consider $r:[-h,+\infty)\to\R_+$ given by $r(t):=q_0/\sqrt{t+d}$, where $q_0\geq (m_g+M_g)/\sqrt{d}$  will be fixed below, and for  $T'_0=\max \{T_0,d\}$  ($T_0$ is taken like in part (i)) we define   $\delta_{\pm}(t,z):=\pm[u(t+T'_0+h,z)-\phi_c(z)]-r(t)$. So, by (\ref{inq0}) we have $$\delta_{\pm}(s,z)\leq 0\quad \hbox{for all} \ (s,z)\in[-h,0]\times\R.$$ And if $(t,z)\in[0,h]\times\R$, by (\ref{rc1}) and Lemma \ref{per} 
  
 $$
  \mathcal{L}\delta_{\pm}(t,z)=\pm\int_{\R}K(z-ch-y)[g(\phi_c(y))-g(u(t+T'_0,y))]-\mathcal{L}r(t) 
  $$
  $$
\geq-[\frac{g'(0)|r_0|_{L^1_{h,\lambda_c}}}{A_h\sqrt{t+T'_0}}\int_{-\infty}^{T'_0}e^{\lambda_c y}K(z-ch-y)dy+\rho_{\epsilon_0}\int_{T'_0}^{+\infty}K(z-ch-y)r(t-h)dy+\mathcal{L}r(t)]
$$
\begin{eqnarray*}
\geq-\frac{q_0}{\sqrt{t+d}}[\frac{g'(0)|r_0|_{L^1_{h,\lambda_c}}}{q_0 \ A_h}\frac{\sqrt{t+d}}{\sqrt{t+T'_0}}|K|_{L1_{\lambda_c}}+\rho_{\epsilon_0}\frac{\sqrt{t+d}}{\sqrt{t+d-h}}-1+\frac{1}{2(t+b)}]. 
\end{eqnarray*}
%$$
%\geq\frac{q_0}{\sqrt{t+d}}[- \frac{\rho_{\epsilon_0}\sqrt{t+d}}{\sqrt{t+d-h}}-\frac{1}{2(t+d)}+1]\geq 0\quad\quad \forall(t,z)\in[0,h]\times\R,
 %$$
 However, by (\ref{inq}), in the last inequality we can choose $q_0$ large enough such that $\mathcal{L}\delta_{\pm}(t,z)\geq 0$ for all $(t,z)\in[0,h]\times\R$, so that Phragm\`en-Lindel\"of principle implies  $\delta_{\pm}(t,z)\leq 0$ for $(t,z)\in[0,h]\times\R.$ Repeating the process in the intervals $[h,2h],[2h,3h]...$ we obtain $\delta(t,z)\leq 0$ for all $(t,z)\in[-h,\infty)\times\R.$ The rest of the proof is  similar to part (i). 
 \end{itemize}

%\underline{Proof of Theorem \ref{aes1}}
%We take $T_1\geq T_0$ such that $g'(\phi_c(y))\in[g'(\kappa)-\epsilon,g'(\kappa)+\epsilon]$ for all $y\geq T_1 .$ Next, in the same notation of Proof of Theorem \ref{aes1}  we take
%$$C_0=\frac{|G_{\epsilon}|\sqrt{T_0}}{g'(0)} e^{-T_1\lambda_c+T_0\gamma_*-\gamma_*h}$$ and $r(t)= q_1 e^{-\gamma_0t}$ where $\gamma_0=-(1+1/h)$ and $q_1= e^{-\gamma_0 h}C_0^{-1}|r_0|$

 %$$
  %\mathcal{L}\delta_{\pm}(t,z)=\pm\int_{\R}k(z-ch-y)[g(\phi_c(y))-g(u(t+T_0,y))]-\mathcal{L}r(t) 
  %$$
 
%$$ 
 %\geq \pm \int_{-\infty}^{T_1}[g(\phi_c(y)-g(u(t+T_0,y))]k(z-ch-y)dy -\mathcal{L}r(t)\ \mp$$
 %$$
 %\int_{T_1}^{+\infty}k(z-ch-y)[(g'(\kappa)-\epsilon) r(t-h)-g^*\epsilon r(t-h)]dy
 %$$
 
% $$
%\geq-\frac{g'(0)|r_0| e^{-\gamma(t+T_0)}}{\sqrt{t+T_0}}\int_{-\infty}^{T_1}e^{\lambda_c y}k(z-ch-y)dy+G_{\epsilon}\int_{T_1}^{\infty}k(z-ch-y)r(t-h)dy-\mathcal{L}r(t)
%$$ 
 
% $$
 %\geq G_{\epsilon}r(t-h)+r(t)+r_t(t)=0 \quad \quad \square
 %$$
 
 % $$
  %and for some $\beta(t,z)$ between $\psi(t-h,z-ch)$ and $u(t-h,z-ch)$
  %$$
  %\mathcal{L}\delta_{\pm}(t,z)\
  %$$
%\end{proof}
 \section*{Acknowledgments} This work was supported by FONDECYT (Chile) through the Postdoctoral  Fondecyt  2016 program with project number 3160473 and under the auspices of the Pontifical Catholic University of Chile.  %The author is very grateful to Dr.  Sergei Trofimchuk for his important comments on this work. 

\end{document}